\theoremstyle{definition}
\newtheorem{nul}{}[section]
\newtheorem{dfn}[nul]{Definition}
\newtheorem{rmk}[nul]{Remark}
\newtheorem{cnstr}[nul]{Construction}
\newtheorem{exm}[nul]{Example}
\newtheorem{obs}[nul]{Observation}
\newtheorem{qst}{Question}
\newtheorem*{dfn*}{Definition}
\newtheorem*{axm*}{Axiom}
\newtheorem*{ntn*}{Notation}
\newtheorem*{exm*}{Example}
\newtheorem*{exr*}{Exercise}
\newtheorem*{int*}{Intuition}
\newtheorem*{qst*}{Question}
\newtheorem*{rmk*}{Remark}
\theoremstyle{plain}
\newtheorem{thm}[nul]{Theorem}
\newtheorem{prop}[nul]{Proposition}
\newtheorem{lem}[nul]{Lemma}
\newtheorem{var}[nul]{Variant}
\newtheorem{cor}{Corollary}[nul]
\newtheorem*{thm*}{Theorem}
\newtheorem*{prop*}{Proposition}
\newtheorem*{cor*}{Corollary}
\newtheorem*{lem*}{Lemma}
\newtheorem*{cnj*}{Conjecture}
\newenvironment{customthm}[1]
  {\innercustomthm}
  {\endinnercustomthm}
\DeclareMathOperator*{\colim}{\mathrm{colim}}
\DeclareMathOperator{\fib}{\mathrm{fib}}
\DeclareMathOperator{\Hom}{\text{Hom}}
\DeclareMathOperator{\smsh}{\wedge}
\DeclareMathOperator{\C}{\mathcal{C}}
\DeclareMathOperator{\D}{\mathcal{D}}
\DeclareMathOperator{\CP}{\mathbb{CP}}
\DeclareMathOperator{\Z}{\mathbb{Z}}
\DeclareMathOperator{\E}{\mathbb{E}}
\DeclareMathOperator{\N}{\mathrm{N}}
\DeclareMathOperator{\cS}{\mathcal{S}}
\DeclareMathOperator{\Ran}{\mathrm{Ran}}
\DeclareMathOperator{\Poly}{\text{Poly}}
\DeclareMathOperator{\Fin}{\mathrm{Fin}}
\DeclareMathOperator{\Gra}{\mathrm{Gr}}
\DeclareMathOperator{\Gr}{\mathbf{Gr}}
\DeclareMathOperator{\Fil}{\mathbf{Fil}}
\DeclareMathOperator{\Fun}{\text{Fun}}
\DeclareMathOperator{\Alg}{\mathrm{Alg}}
\DeclareMathOperator{\Sp}{\mathrm{Sp}}
\DeclareMathOperator{\J}{\mathcal{J}}
\DeclareMathOperator{\Cofil}{\mathbf{Cofil}}
\begin{document}

\title{Multiplicative Structure in the Stable Splitting of $\Omega SL_n(\mathbb{C})$}
\author{Jeremy Hahn}
\address{Department of Mathematics, Massachusetts Institute of Technology, Cambridge, MA 02139}
\email{jhahn01@mit.edu}

\author{Allen Yuan}
\address{Department of Mathematics, Massachusetts Institute of Technology, Cambridge, MA 02139}
\email{alleny@mit.edu}

\begin{abstract}
The space of based loops in $SL_n(\mathbb{C})$, also known as the affine Grassmannian of $SL_n(\mathbb{C})$, admits an $\mathbb{E}_2$ or fusion product.  Work of Mitchell and Richter proves that this based loop space stably splits as an infinite wedge sum.  We prove that the Mitchell--Richter splitting is coherently multiplicative, but not $\mathbb{E}_2$.  Nonetheless, we show that the splitting becomes $\mathbb{E}_2$ after base-change to complex cobordism.  Our proof of the $\mathbb{A}_\infty$ splitting involves on the one hand an analysis of the multiplicative properties of Weiss calculus, and on the other a use of Beilinson--Drinfeld Grassmannians to verify a conjecture of Mahowald and Richter.  Other results are obtained by explicit, obstruction-theoretic computations.
\end{abstract}


\setcounter{tocdepth}{1}
\maketitle

\tableofcontents

\vbadness 5000


\section{Introduction}

We study the homotopy type of the affine Grassmannian of $SL_n(\mathbb{C})$, which is equivalent to the space $\Omega SU(n)$ of based loops in $SU(n)$.  There are essentially two multiplications on this homotopy type, one arising from the composition of loops and the other from the group multiplication on $SL_n(\mathbb{C})$.  Together, these two multiplications interact to give $\Omega SU(n)$ the structure of an $\mathbb{E}_2$ or chiral algebra.  In geometric representation theory, this structure is witnessed by the existence of the Beilinson--Drinfeld Grassmannian.

Either of the above (homotopy equivalent) products make $H_*(\Omega SU(n);\mathbb{Z})$ into a graded ring.  To describe this ring, let us first name some of its elements.  For each one-dimensional subspace $V \subset \mathbb{C}^n$, there is a loop $\lambda_V:S^1 \rightarrow U(n)$ given by the formula
$$\lambda_V(z)=\left( \begin{array}{cc} z & 0 \\ 0 & I \end{array} \right),$$
with the matrix presented in terms of the decomposition $\mathbb{C}^n \cong V \oplus V^{\perp}$.  Fixing a particular line $W \subset \mathbb{C}^n$, the construction $V \mapsto \lambda_W^{-1} \cdot \lambda_V$ defines a well-known map
$$\mathbb{CP}^{n-1} \rightarrow \Omega SU(n).$$
For $1 \le i \le n-1$, let $b_i \in H_{2i}(\Omega SU(n);\mathbb{Z})$ denote the image of the generator of $H_{2i}(\mathbb{CP}^{n-1};\mathbb{Z})$.  It is a result of Bott \cite{Bott} that
$$H_*(\Omega SU(n);\mathbb{Z}) \cong \mathbb{Z}[b_1,b_2,\cdots,b_{n-1}],$$
with the latter denoting the polynomial algebra on the classes $b_i$.

Notice that $H_*(\Omega SU(n);\mathbb{Z})$ is a \textit{bigraded} ring: there is, in addition to the usual homological grading $*$, a word length grading that assigns each $b_i$ degree $1$.  Mahowald observed that the action of the Steenrod algebra on $H_*(\Omega SU(n);\mathbb{F}_2)$ preserves word length, and he conjectured a geometric splitting to be responsible.

Motivated by Mahowald's conjecture, Mitchell \cite{MitchellSU(n)} (and, independently, Segal \cite{Segal}) constructed a filtration $$* =F_{n,0} \longrightarrow F_{n,1} \longrightarrow F_{n,2}\longrightarrow \cdots \longrightarrow \Omega SU(n).$$
Following Mitchell, we name this the \textit{Bott filtration} of $\Omega SU(n)$.  The homology of $F_{n,k}$ consists of words of length at most $k$, and the inclusion $F_{n,1} \rightarrow \Omega SU(n)$ is given by the above map $\mathbb{CP}^{n-1} \rightarrow \Omega SU(n)$.  For $k>1$, the homotopy type $F_{n,k}$ may be modeled as a singular algebraic variety, and there is a surjective resolution of singularities $(\mathbb{CP}^{n-1})^{\times k} \longrightarrow F_{n,k}$.  The exact construction of the Bott filtration is somewhat involved, and we review it in Section \ref{sec:MRFil}--it is a subfiltration of the Bruhat ordering on (closures of) Iwahori orbits.

Confirming Mahowald's intuition, Mitchell and Richter \cite[Theorem 2.1]{CrabbMitchell} proved that the Bott filtration splits after taking suspension spectra.  In short, there is a wedge sum decomposition
$$\Sigma^{\infty}_+ \Omega SU(n) \simeq \bigvee_k \Sigma^{\infty} F_{n,k}/F_{n,k-1} \simeq \mathbb{S} \vee \Sigma^{\infty} \mathbb{CP}^{n-1} \vee \cdots.$$

\begin{rmk}
Readers unfamiliar with stable homotopy theory may prefer to think of the suspension spectrum functor $\Sigma^\infty_+$ as analogous to taking the underlying motive of a variety.  The statement is then that this motive splits; in particular, its homology will split in any homology theory.  
\end{rmk}

\begin{exm}
In the case $n=2$, the Bott filtration of $\Omega SU(2) \simeq \Omega S^3$ is the classical James filtration of $\Omega \Sigma S^2$.  The Mitchell--Richter splitting recovers the stable James splitting.
\end{exm}

In this paper, we will be interested in \emph{multiplicative} aspects of the Bott filtration and its splitting.  In Section \ref{sec:FilGra}, we review the ($\infty$)-categories of filtered and graded spectra.  A filtered spectrum is a sequence of spectra $(X_0 \to X_1 \to X_2 \to \cdots)$ connected by maps; a graded spectrum is simply a sequence $(X_0,X_1,X_2,\cdots)$ of spectra.  These categories acquire symmetric monoidal structures by Day convolution coming from the addition of nonnegative integers.  This allows us to talk about $\E_n$-algebras in filtered and graded spectra, providing the language necessary to state our first main theorem (proven in Section \ref{sec:MRFil}):

\begin{thm} \label{thm:BottIsAoo}
The suspension of the Bott filtration 
$$\mathbb{S} \longrightarrow \Sigma_+^{\infty} \mathbb{CP}^{n-1} \simeq \Sigma_+^{\infty} F_{n,1} \longrightarrow \Sigma_+^{\infty} F_{n,2} \longrightarrow \cdots \longrightarrow \Sigma^{\infty}_+ \Omega SU(n).$$
is an $\mathbb{A}_\infty$-algebra object in filtered spectra.
\end{thm}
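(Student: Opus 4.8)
The plan is to recognize the suspended Bott filtration as the partial sums of the Weiss--calculus layers of a multiplicative functor, and to transport the multiplication from that functor onto the filtration.

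First I would package the fusion product functorially in the ambient vector space. The assignment $V \mapsto \mathrm{Gr}_{SL(V)} \simeq \Omega SU(V)$ is a continuous functor on the category $\mathcal{J}_{\mathbb{C}}$ of finite-dimensional Hermitian inner product spaces and isometric embeddings, and I would use the Beilinson--Drinfeld Grassmannian of $SL(V)$ over a smooth affine curve to promote the convolution product to coherent multiplicative data, obtaining upon passage to suspension spectra a functor $F\colon \mathcal{J}_{\mathbb{C}} \to \Alg_{\E_1}(\Sp)$ with $F(V) \simeq \Sigma^\infty_+\Omega SU(V)$. (The Beilinson--Drinfeld picture in fact provides an $\E_2$-structure by factorization over the curve, but only the underlying $\E_1$-structure, equivalently the $\mathbb{A}_\infty$-structure, enters the argument; this is why the theorem claims nothing more.) Next I would invoke the multiplicative properties of Weiss calculus: the polynomial approximations $T_k$ are built from objectwise homotopy limits and sequential colimits, both preserved by the forgetful functor $\Alg_{\E_1}(\Sp) \to \Sp$, so the Weiss tower of $F$ is a tower of $\E_1$-algebra-valued functors; moreover the pointwise smash product of an $a$-homogeneous and a $b$-homogeneous functor is $(a+b)$-homogeneous, while homogeneous functors of distinct degrees admit no nonzero natural transformations, so the multiplication $F \wedge F \to F$ is forced to carry $D_a F \wedge D_b F$ into $D_{a+b} F$, and similarly for the higher operations. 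It follows that the $\mathbb{A}_\infty$-operad action on $F$ restricts to the increasing filtration $G_\bullet F := \bigvee_{m \le \bullet} D_m F$ by partial sums of layers, making $G_\bullet F$ an $\mathbb{A}_\infty$-algebra in the Day-convolution symmetric monoidal $\infty$-category of filtered objects, and hence (applying the symmetric monoidal evaluation at $\mathbb{C}^n$) an $\mathbb{A}_\infty$-algebra in filtered spectra.

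It then remains to identify this filtered $\mathbb{A}_\infty$-algebra with the suspension of the Bott filtration. Here the input is the Mahowald--Richter conjecture, which I would verify using the Beilinson--Drinfeld Grassmannian: one must show that the Mitchell--Richter stable splitting of $\Sigma^\infty_+\Omega SU(V)$ agrees, naturally in $V$, with the splitting into Weiss layers, so that $G_k F(\mathbb{C}^n) \simeq \Sigma^\infty_+ F_{n,k}$ compatibly with the inclusions $F_{n,k-1} \hookrightarrow F_{n,k}$. The Beilinson--Drinfeld Grassmannian is the crucial device precisely because it interpolates, in a family over the curve, between $\mathrm{Gr}_{SL(V)}$ and its self-products, which is what makes the polynomial-in-$V$ behaviour of the stages $F_{\dim V, k}$, together with their compatibility with the fusion product, accessible from the modular (Iwahori-orbit) description of the Bott filtration.

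The main obstacle is exactly this last identification. Mitchell's Bott filtration is defined through genuinely singular algebraic geometry --- closures of Iwahori orbits ordered by a subposet of the Bruhat order, with only the resolution $(\mathbb{CP}^{n-1})^{\times k} \to F_{n,k}$ available at the level of spaces --- and has a priori no relationship to orthogonal calculus or to the fusion product, so matching it with $G_\bullet F$ is where essentially all the geometric content lies; supplying this matching is the function of the Beilinson--Drinfeld Grassmannian and is what the Mahowald--Richter conjecture packages. A secondary, bookkeeping-heavy point is to make sure the structure one produces is genuinely a Day-convolution $\mathbb{A}_\infty$-algebra --- i.e. that all the higher coherences of the filtered multiplication, not just the binary product's respecting word length (which is classical on homology), are delivered functorially by the calculus --- and to confirm that the same strategy does not upgrade to $\E_2$, which it must not, by the obstruction computations carried out elsewhere in the paper.
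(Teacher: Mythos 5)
Your proposal has a genuine gap, and it sits exactly where the content of Theorem \ref{thm:BottIsAoo} lies. Weiss calculus produces a \emph{cofiltered} object, the tower $F \to \cdots \to P_2F \to P_1F \to P_0F$; it does not produce a filtration. Your candidate filtration $G_\bullet F = \bigvee_{m\le \bullet} D_mF$ is not a subobject of $F$ at all --- the layers $D_mF$ are fibers of $P_mF \to P_{m-1}F$ and admit no natural map to $F$ --- so declaring that the multiplication ``restricts'' to it presupposes the Mitchell--Richter splitting, which is the (strictly stronger) Theorem \ref{thm:MainAoo}, not Theorem \ref{thm:BottIsAoo}. Moreover, Arone's identification $P_kF(\mathbb{C}) \simeq \Sigma^\infty_+ F_{n,k}$ is itself \emph{deduced from} the prior existence of the Bott filtration with homogeneous subquotients, so one cannot define the Bott filtration out of the Weiss tower without circularity. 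The paper keeps these two structures separate: the Weiss tower supplies the $\mathbb{A}_\infty$ \emph{cofiltered} spectrum (Section \ref{sec:MultWeiss}), which is only combined with the filtered structure later, via Theorem \ref{thm:SplitMachine}, to prove the splitting.

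What your argument never addresses is the actual difficulty of Theorem \ref{thm:BottIsAoo}: the connecting maps $\lambda^{-k}F_{n,k} \to \lambda^{-(k+1)}F_{n,k+1}$ are \emph{not} inclusions of Schubert closures but are given by multiplying by the loop $\lambda$, which shifts components of $\Omega U(n)$, and one must make these maps cohere with the fusion product to all orders. The paper's proof does this by working with $GL_n$ rather than $SL_n$ (the Schubert filtration on $Gr_{SL_n}$ is strictly coarser than the Bott filtration, cf.\ Example \ref{sl2example}, so Beilinson--Drinfeld for $SL(V)$ alone cannot see it): Theorem \ref{thm:schubertE2} makes $\Sigma^\infty_+\coprod_k F_{n,k}$ a graded $\mathbb{E}_2$-algebra, and the functoriality of Beilinson--Drinfeld Grassmannians along $\mathbb{G}_m \to GL_n$ (Remark \ref{rmk:bdgrfunct}) produces an $\mathbb{E}_2$-map $A = \Sigma^\infty_+\mathbb{Z}^{ds}_{\ge 0} \to \Sigma^\infty_+\coprod_k F_{n,k}$; since $\Fil(\Sp) \simeq \mathbf{Mod}_A(\Gr(\Sp))$ by Lemma \ref{lem:FilAsGrMod}, an $\mathbb{E}_2$-algebra under $A$ is in particular an $\mathbb{A}_\infty$-algebra in filtered spectra. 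That step --- encoding the filtration-shift as module structure over the graded sphere $A$ and realizing it geometrically via $\mathbb{G}_m$ --- is the idea missing from your proposal.
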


\begin{rmk}
The Bott filtration is multiplicative before suspension, but for technical reasons we prefer to phrase our results in terms of filtered spectra instead of filtered spaces.
\end{rmk}

\begin{qst} \label{qst:BottE2}
Is the Bott filtration an $\mathbb{E}_2$ filtration?  We do not know the answer--for some thoughts about the problem, see Remark \ref{rmk:E2fil}.
\end{qst}

The proof of Theorem \ref{thm:BottIsAoo} is fairly straightforward, once given access to the sophisticated machinery behind the Beilinson--Drinfeld Grassmannian.  For example, we will explain in Section \ref{sec:MRFil} that this machinery immediately dispenses with a conjecture of Mahowald and Richter \cite{MahowaldRichter}.  Nonetheless, there are some subtleties involved, and it is these subtleties that prevent us from determining if the Bott filtration is $\mathbb{E}_2$.  The problem is readily visible in the case $n=\infty$:

\begin{exm}\label{exm:BottFil}
The limiting case of the Bott filtration of $\Omega SU(n)$ as $n$ tends to $\infty$ is the filtration
$$* \longrightarrow BU(1) \longrightarrow BU(2) \longrightarrow BU(3) \longrightarrow \cdots \longrightarrow BU \simeq \Omega SU.$$
It is easy to see that $\coprod BU(n)$ is a graded $\mathbb{E}_2$-algebra in spaces (in fact, it is a graded $\mathbb{E}_\infty$-algebra, being the nerve of the category of vector spaces).  However, the filtered object is much more subtle.  For example, the squares
$$
\begin{tikzcd}
BU(i) \times BU(j) \arrow{d} \arrow{r} & BU(i) \times BU(j+1) \arrow{d} \\
BU(i+1) \times BU(j) \arrow{r} & BU(i+1) \times BU(j+1)
\end{tikzcd}
$$
do not commute on the nose, but only up to non-canonical homotopy.
\end{exm}

In Section \ref{sec:FilGra}, we discuss an \textit{associated graded} construction that transforms filtered $\mathbb{E}_n$-algebras into graded $\mathbb{E}_n$-algebras.  The central result of our paper may be thought of as a multiplicatively structured version of the Mitchell--Richter splitting:

\begin{thm} \label{thm:MainAoo}
As an $\mathbb{A}_\infty$-algebra object in filtered spectra, the Bott filtration of $\Sigma^{\infty}_+ \Omega SU(n)$ is equivalent to its associated graded.
\end{thm}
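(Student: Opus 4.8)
The plan is to upgrade the Mitchell--Richter splitting to an $\mathbb{A}_\infty$-coherent one by combining the multiplicative features of Weiss calculus with the Beilinson--Drinfeld Grassmannian. By the formalism of Section~\ref{sec:FilGra}, the statement is equivalent to the assertion that the filtered $\mathbb{A}_\infty$-algebra of Theorem~\ref{thm:BottIsAoo}, namely $\mathbb{S}\to\Sigma^\infty_+ F_{n,1}\to\Sigma^\infty_+ F_{n,2}\to\cdots$, is carried by an equivalence of $\mathbb{A}_\infty$-algebras in filtered spectra onto the filtered $\mathbb{A}_\infty$-algebra built, by forming partial sums, from its associated graded $\bigoplus_k\Sigma^\infty F_{n,k}/F_{n,k-1}$. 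Since forming partial sums is a symmetric monoidal section of the associated-graded functor, the content is precisely that the connecting maps of the cofibre sequences $\Sigma^\infty_+ F_{n,k-1}\to\Sigma^\infty_+ F_{n,k}\to\Sigma^\infty F_{n,k}/F_{n,k-1}$, together with the filtration-decreasing components of the product $\Sigma^\infty_+ F_{n,i}\otimes\Sigma^\infty_+ F_{n,j}\to\Sigma^\infty_+ F_{n,i+j}$, admit coherent nullhomotopies. Mitchell and Richter furnish nullhomotopies of the connecting maps themselves after forgetting multiplicativity; the difficulty is to produce all of them compatibly with the product.

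The device for organizing these nullhomotopies is to make the entire picture functorial in the underlying Hermitian space. First I would promote $V\mapsto\Sigma^\infty_+\Omega SU(V)$, together with Mitchell's filtration, to a filtered $\mathbb{A}_\infty$-algebra object in the $\infty$-category $\Fun(\mathcal{J},\Sp)$ of functors from finite-dimensional Hermitian spaces to spectra, with value at $\mathbb{C}^n$ the filtered $\mathbb{A}_\infty$-algebra of Theorem~\ref{thm:BottIsAoo}. This is where the Beilinson--Drinfeld Grassmannian is used a second time, now to supply the loop/fusion product coherently as $V$ varies; it is also here that the conjecture of Mahowald and Richter verified in Section~\ref{sec:MRFil} is invoked. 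I would then feed this algebra object into the multiplicative Weiss--Taylor machinery. The role of the analysis of the multiplicative properties of Weiss calculus is to recognize Mitchell's filtration as the multiplicative polynomial filtration naturally attached to this analytic functor, and to show that the loop/fusion product respects polynomial degree; the upshot is that the relevant obstruction spectra---the mapping spectra receiving the connecting maps above, and those receiving the filtration-decreasing components of the product---are identified with mapping spectra out of homogeneous functors of some Weiss-degree $k$ into functors of polynomial degree strictly less than $k$. Such mapping spectra are \emph{contractible}: this is the orthogonality of Weiss-homogeneous functors of distinct degrees, which in turn follows from the fact that polynomial approximation is a Bousfield localization. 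Because every obstruction is thereby not merely nullhomotopic but uniquely so, there is no room for higher coherence obstructions either, so the filtered $\mathbb{A}_\infty$-algebra in functors is equivalent to its associated graded. Evaluating at $\mathbb{C}^n$ gives the theorem, the $k$-th graded piece being $(2k-1)$-connected, so the evaluation is unambiguous and the intervening towers converge.

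The hard part will be this identification of Mitchell's geometric filtration, presented in Section~\ref{sec:MRFil} by the resolutions $(\mathbb{CP}^{n-1})^{\times k}\to F_{n,k}$, with the multiplicative Weiss filtration of $V\mapsto\Sigma^\infty_+\Omega SU(V)$, compatibly with both the filtration maps and the $\mathbb{A}_\infty$-product. Away from the stable limit $n=\infty$ of Example~\ref{exm:BottFil} the functors in play are only analytic rather than polynomial, so the comparison has to be carried out stage by stage and then shown to be compatible with the whole filtered $\mathbb{A}_\infty$-structure at once; it is exactly for this that a careful analysis of the multiplicative behaviour of Weiss's polynomial and homogeneous approximations is indispensable. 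A subsidiary difficulty, needed merely to set up the functoriality in $V$ of the first step with adequate coherence, is to run the proof of Theorem~\ref{thm:BottIsAoo}---and with it the Beilinson--Drinfeld input---uniformly over $\mathcal{J}$.
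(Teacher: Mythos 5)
Your proposal has the right ingredients---Beilinson--Drinfeld for the filtered side, the multiplicative Weiss tower for the cofiltered side, and orthogonality of homogeneous pieces as the ultimate reason the splitting works---but it asks for much more than the proof actually needs, and does not resolve what you yourself flag as ``the hard part.'' The decisive point you are missing is that Theorem \ref{thm:SplitMachine} is engineered so that the \emph{filtered} $\mathbb{A}_\infty$-structure on $\Sigma^\infty_+\{F_{n,k}\}$ (supplied by Theorem \ref{thm:AooFil} at the fixed value $n$) and the \emph{cofiltered} $\mathbb{A}_\infty$-structure on the Weiss tower of $F_V(W)=\Sigma^\infty_+\Omega\J(V,V\oplus W)$ (supplied by Corollary \ref{cor:aronemonoidal}) can be produced entirely independently of one another; the only compatibilities required to glue them are (1) an $\mathbb{A}_\infty$-equivalence of the colimit with the limit, both of which are $\Sigma^\infty_+\Omega SU(n)$ with the loop-space product, and (2) \emph{unstructured} equivalences $F_{n,k}\simeq P_kF_V(\mathbb{C})$ on the pieces, which is precisely what Arone proved. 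So there is no need to promote Mitchell's geometric filtration to a multiplicatively coherent identification with the Weiss polynomial tower, nor to run the Beilinson--Drinfeld argument uniformly over $\J$; both of these are extra burdens your route creates and then leaves open.

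Two further issues. First, the Weiss tower is a \emph{cofiltered} object and the Bott filtration is a \emph{filtered} object; the statement ``recognize Mitchell's filtration as the multiplicative polynomial filtration'' is not yet well-typed, and making it precise is essentially the content of the theorem, so as written your reduction is close to circular. Second, you apply Weiss calculus to $V\mapsto\Sigma^\infty_+\Omega SU(V)$, whereas the calculus is applied in the paper (following Arone) to $W\mapsto\Sigma^\infty_+\Omega\J(V,V\oplus W)$ for \emph{fixed} $V$; it is this second functor whose homogeneous layers match Mitchell's subquotients and whose connectivity estimate (Example \ref{ex:aronefunctor}) makes the tower converge. Finally, the appeal to ``uniqueness of nullhomotopies kills higher coherences'' would need an actual obstruction theory to support it; the paper's splitting machine (proved in Appendix \ref{app:SplittingMachine} via a relative adjunction and a Cartesian square of symmetric monoidal $\infty$-categories) sidesteps this entirely.
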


\begin{cor}
For any multiplicative homology theory $E$, $E_*(\Omega SU(n))$ is a bigraded ring.  One grading is given by $*$, and the other by the associated graded of the Bott filtration.
\end{cor}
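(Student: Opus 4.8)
The plan is to deduce this corollary directly from Theorem~\ref{thm:MainAoo} by applying $E$-homology; the content is entirely in that theorem, and what remains is bookkeeping with monoidal structures. Write $E_*(-)$ for the functor $\pi_*(E \wedge -)$ on spectra, so that $E_*(\Omega SU(n))$ means $E_*(\Sigma^\infty_+ \Omega SU(n))$. For a multiplicative homology theory, represented by a homotopy-associative and unital ring spectrum $E$, this functor from $\Sp$ to $\mathbb{Z}$-graded abelian groups is lax symmetric monoidal --- via the product of $E$ together with the K\"unneth map on homotopy groups --- and it preserves colimits. The symmetric monoidal structures on filtered and graded spectra recalled in Section~\ref{sec:FilGra} are Day convolution along $(\mathbb{N},+)$, hence assembled from colimits and left Kan extensions over $\mathbb{N}$; since $E \wedge -$ commutes with these, $E_*(-)$ prolongs to a lax symmetric monoidal functor from $\Fil(\Sp)$ to filtered $\mathbb{Z}$-graded abelian groups, and likewise with $\Fil$ replaced by $\Gr$, compatibly with the symmetric monoidal ``partial sums'' functor $\Gr \to \Fil$ and with passage to the underlying (filtered or graded) object. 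In particular $E_*(-)$ sends $\mathbb{A}_\infty$-algebras to associative algebras; since we are only recording homotopy groups, there is no higher coherence left, so we obtain honest (bi)graded rings.

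Applying $E_*(-)$ to the filtered $\mathbb{A}_\infty$-algebra of Theorem~\ref{thm:BottIsAoo} yields a filtered associative ring
$$
E_*(F_{n,0}) \longrightarrow E_*(F_{n,1}) \longrightarrow E_*(F_{n,2}) \longrightarrow \cdots,
$$
with pairings $E_*(F_{n,i}) \otimes E_*(F_{n,j}) \to E_*(F_{n,i+j})$ and with colimit $E_*(\Omega SU(n))$. Applying it instead to the associated graded of the Bott filtration --- a graded $\mathbb{A}_\infty$-algebra, by Section~\ref{sec:FilGra} --- yields the graded associative ring $\bigoplus_{k \ge 0} E_*\big(F_{n,k}/F_{n,k-1}\big)$, in which the product of the $k$-th and $\ell$-th summands lands in the $(k+\ell)$-st; each summand of course still carries the homological grading by $*$.

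Now invoke Theorem~\ref{thm:MainAoo}: the Bott filtration is equivalent, as an $\mathbb{A}_\infty$-algebra in filtered spectra, to the image under $\Gr \to \Fil$ of its associated graded. Applying the lax symmetric monoidal functor $E_*(-)$ turns this equivalence into an isomorphism of filtered associative rings, and reading it off on underlying objects gives a ring isomorphism
$$
E_*(\Omega SU(n)) \;\cong\; \bigoplus_{k \ge 0} E_*\big(F_{n,k}/F_{n,k-1}\big).
$$
The right-hand side is bigraded --- once by the index $k$, which is the associated graded of the Bott filtration, and once by homological degree --- and both gradings are preserved by multiplication; the unit lies in the $k=0$ summand $\mathbb{S} = \Sigma^\infty S^0$, that is, in bidegree $(0,0)$. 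Transporting this bigrading along the isomorphism proves the corollary, and specializing to $E = H\mathbb{Z}$ recovers, after Bott's computation, the word-length grading on $\mathbb{Z}[b_1,\dots,b_{n-1}]$.

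There is no serious obstacle here beyond Theorems~\ref{thm:BottIsAoo} and~\ref{thm:MainAoo}. The one step deserving a moment's care is the claim that $\pi_*(E \wedge -)$ transports the Day-convolution monoidal structures on filtered and graded spectra to the evident pointwise monoidal structures on filtered and graded $\mathbb{Z}$-graded abelian groups --- this is exactly where one uses that smashing with $E$ preserves the colimits defining Day convolution --- together with the observation that $\mathbb{A}_\infty$, rather than $\mathbb{E}_2$, already suffices, since the corollary asserts only an associative bigraded ring structure (the commutativity of $E_*(\Omega SU(n))$ being true but orthogonal to the point). A reader wary of invoking $\infty$-categorical lax monoidality for a merely homotopy-associative $E$ may instead model $E$ by an $\mathbb{E}_1$-ring, or run the argument in the homotopy category of filtered spectra throughout; the conclusion is unchanged.
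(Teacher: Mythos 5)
Your proposal is correct and is exactly the argument the paper intends: the corollary is stated as an immediate consequence of Theorem \ref{thm:MainAoo}, with no separate proof given, and your write-up simply supplies the standard bookkeeping (lax monoidality of $\pi_*(E\wedge -)$ applied to the filtered/graded Day convolution structures). The only phrasing to tighten is the claim that $E_*(-)$ ``preserves colimits''---what is actually used is that $E\wedge -$ preserves the colimits defining Day convolution and that $\pi_*$ commutes with the resulting finite wedges and sequential colimits, which is all fine.
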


\begin{exm} \label{exm:Thom}
In the case $n=\infty$ of Example \ref{exm:BottFil}, the Mitchell--Richter splitting
\begin{equation} \label{eqn:MunSplitting}
\Sigma^{\infty}_+ BU \simeq \bigvee_n \Sigma^{\infty} BU(n)/BU(n-1).
\end{equation}
recovers an older result of Snaith \cite{SnaithBook}.  Snaith further showed that (\ref{eqn:MunSplitting}) is an equivalence of homotopy commutative ring spectra, and our Theorem \ref{thm:MainAoo} gives an equivalence of $\mathbb{A}_\infty$-ring spectra.
 
The left-hand side of (\ref{eqn:MunSplitting}) is an $\mathbb{A}_\infty$-algebra by virtue of the fact that $BU$ is a loop space, while the right-hand side acquires its $\mathbb{A}_\infty$-algebra structure from an associated graded construction. To understand this latter $\mathbb{A}_\infty$ structure, it may help to know that $BU(n)/BU(n-1) \simeq MU(n),$ the Thom space of the canonical bundle over $BU(n)$.  We therefore recognize the right-hand side of (\ref{eqn:MunSplitting}) as the Thom spectrum of the $J$-homomorphism
$$\coprod BU(n) \stackrel{J}{\longrightarrow} Pic(\mathbb{S}).$$
Since $J$ is a loop map, its Thom spectrum acquires an $\mathbb{A}_\infty$-algebra structure, and this turns out to agree with the $\mathbb{A}_\infty$ associated graded of the Bott filtration.

Of course, $BU$ is not just a loop space, but in fact an infinite loop space.  Similarly, $J$ is not just a loop map, but furthermore an infinite loop map.  Thus, both sides of (\ref{eqn:MunSplitting}) are naturally $\mathbb{E}_\infty$-ring spectra.  Perhaps surprisingly, these $\mathbb{E}_\infty$-rings are \textbf{not} equivalent.
\end{exm}

\begin{rmk}
Snaith used his splitting $(\ref{eqn:MunSplitting})$ to give an equivalence of homotopy commutative ring spectra
\begin{equation}
\Sigma^{\infty}_+ BU[\beta^{-1}] \simeq MUP, \label{eqn:MUP}
\end{equation}
where $MUP$ denotes periodic complex bordism.  We will have more to say about the coherence of (\ref{eqn:MUP}) in forthcoming work.
\end{rmk}

In fact, the $\mathbb{A}_\infty$-splitting provided by Theorem \ref{thm:MainAoo} is the best result possible:

\begin{thm} \label{thm:MainObstruction}
Suppose $n \ge 4$.  If the Bott filtration of $\Sigma^{\infty}_+ \Omega SU(n)$ may be made into an $\mathbb{E}_2$-algebra object in filtered spectra, then it is \textbf{not} equivalent to its $\mathbb{E}_2$ associated graded.  More generally, any extension of the graded $\mathbb{A}_\infty$-algebra of Theorem \ref{thm:MainAoo} to a graded $\mathbb{E}_2$-algebra must fail to have the usual $\mathbb{E}_2$-algebra structure on its underlying ungraded $\mathbb{E}_2$-ring.
\end{thm}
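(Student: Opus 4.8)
The plan is to prove this as an obstruction-theoretic statement, isolating the first place where an $\mathbb{E}_2$-refinement of the associated graded must differ from the standard one. First I would recall that Theorem \ref{thm:MainAoo} identifies the Bott filtration with its associated graded as an $\mathbb{A}_\infty$-algebra, so the graded $\mathbb{A}_\infty$-algebra underlying the splitting is the one whose underlying ungraded spectrum is $\Sigma^\infty_+ \Omega SU(n)$ with its loop multiplication; the question is whether the grading can be promoted to an $\mathbb{E}_2$-structure compatibly. The strategy is to work in homology, or rather in the homotopy of a suitable Postnikov or cell filtration, and to compare two potential $\mathbb{E}_2$-structures on the graded ring by examining their effect on a single, low-dimensional, low-word-length class — concretely the class $b_2 \in H_4$ living in word length $1$, and its interaction under the $\mathbb{E}_2$ (i.e. Browder) bracket and the Dyer--Lashof-type operations with $b_1 \in H_2$. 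The key point is that in the \emph{graded} $\mathbb{E}_2$ associated graded, the word-length grading forces the Browder bracket $[b_1, b_1]$ (which lands in homological degree $4$ and word length $2$) and the squaring-type operation on $b_1$ to behave one way, whereas in the genuine $\Omega SU(n)$ the $\mathbb{E}_2$-structure (coming from the fusion/Beilinson--Drinfeld product interacting with the loop product) produces a class supported in a different word length, or a nontrivial bracket where the graded version forces zero. This is exactly the phenomenon already visible in Example \ref{exm:Thom}: the $\mathbb{E}_\infty$ (hence $\mathbb{E}_2$) ring $\Sigma^\infty_+ BU$ is genuinely different from the Thom spectrum $\bigvee MU(n)$, and the discrepancy first appears in a secondary operation.

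The concrete steps I would carry out are: (1) Set up the comparison: any graded $\mathbb{E}_2$-algebra $A^{\mathrm{gr}}$ refining the $\mathbb{A}_\infty$ associated graded has an underlying ungraded $\mathbb{E}_2$-ring $A$, and I must show $A \not\simeq \Sigma^\infty_+ \Omega SU(n)$ as $\mathbb{E}_2$-rings whenever the refinement exists; equivalently, the hypothetical filtered $\mathbb{E}_2$-structure whose associated graded is the standard one cannot itself have underlying $\mathbb{E}_2$-ring $\Sigma^\infty_+\Omega SU(n)$. (2) Reduce mod a prime (I expect $p=2$ is cleanest, matching Mahowald's original $\mathbb{F}_2$ observation) and pass to the homology $H_*(-;\mathbb{F}_2)$, which is functorial in $\mathbb{E}_2$-ring maps and carries the structure of an algebra over the mod-$2$ $\mathbb{E}_2$-Dyer--Lashof operations together with the Browder bracket. (3) Compute this structure on $H_*(\Omega SU(n);\mathbb{F}_2) = \mathbb{F}_2[b_1,\dots,b_{n-1}]$ using that the $\mathbb{E}_2$-structure is the one coming from the composition-of-loops product together with the group multiplication — the relevant input is that the loop product makes $\Omega SU(n)$ the loop space on $SU(n)$, and Kudo--Araki/Browder operations are computed from the transgression and from Bott's description; in particular I expect a relation of the shape (bracket or operation applied to $b_1$) $= c\, b_2 + (\text{word length }2\text{ terms})$ with $c \neq 0$, and it is here that $n \ge 4$ enters, since one needs enough generators $b_1, b_2$ and enough room (word length and homological degree) for the decomposable correction term $b_1^2$ to be genuinely present and to be forced to vanish in the graded model. (4) Compute the same structure on any graded $\mathbb{E}_2$ refinement of the associated graded: the word-length grading is preserved by all $\mathbb{E}_2$-operations, $b_1$ has word length $1$, $b_1^2$ has word length $2$, and $b_2$ has word length $1$; so an operation/bracket output that is homogeneous of word length $2$ cannot equal $c\,b_2 + b_1^2$ with $c \ne 0$ — it must equal just $b_1^2$ (or a pure word-length-$2$ class), giving a contradiction with step (3).

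The main obstacle, and where I would spend the most care, is step (3): pinning down the precise $\mathbb{E}_2$-homology operation on $b_1$ in $\Omega SU(n)$ and verifying that the ``error term'' relative to the word-length-graded answer is nonzero. This requires genuinely using the $\mathbb{E}_2 = $ fusion structure — the Beilinson--Drinfeld Grassmannian picture — rather than just the $\mathbb{A}_\infty$ loop structure, because an $\mathbb{A}_\infty$-ring map cannot see the bracket; so I would need a model for the Browder bracket on $H_*(\Omega SU(n))$, e.g. via the double-loop-space structure on $\Omega SU(n) = \Omega^2 (SU(n)/\text{?})$ — more precisely via $BU(n)$-type models and the fact that $\Omega SU(n)$ is the fiber of a map of loop spaces — and compute the bracket $[b_1,b_1]$ or the operation $Q^s b_1$ explicitly, checking it is not word-length-homogeneous. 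A secondary obstacle is packaging the obstruction-theoretic language cleanly: I would phrase it so that the mere \emph{existence} of a filtered $\mathbb{E}_2$-structure with the given associated graded is assumed, and then the fixed, computable discrepancy in $\mathbb{F}_2$-homology $\mathbb{E}_2$-operations between "what the associated graded forces" and "what $\Sigma^\infty_+\Omega SU(n)$ actually has" yields the contradiction; the restriction $n \ge 4$ is exactly the threshold below which the decomposable correction term has nowhere to live and the argument degenerates, which I would note matches the fact that for $n = 2,3$ the spaces are small enough ($\Omega S^3$ and $\Omega SU(3)$) that no such obstruction is forced.
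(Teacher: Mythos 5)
Your overall logical framing is right (assume a graded $\mathbb{E}_2$-refinement, observe that all $\mathbb{E}_2$-operations must preserve word length, and contradict this against a computation on the genuine $\Sigma^\infty_+\Omega SU(n)$), but the specific invariant you propose in steps (2)--(4) cannot work. The homology $H_*(\Omega SU(n);\mathbb{F}_2)\cong\mathbb{F}_2[b_1,\dots,b_{n-1}]$ is concentrated in \emph{even} degrees, while every non-formal $\mathbb{E}_2$-operation on the degree-$2$ class $b_1$ lands in odd degree: the Browder bracket $[b_1,b_1]$ and the top Dyer--Lashof operation $Q^3b_1$ both live in $H_5=0$, so they vanish identically on \emph{both} sides. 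The only operation on $b_1$ landing in $H_4$ is $Q^2b_1=b_1^2$, the Frobenius square, which is determined by the ring structure alone, agrees on both sides, and is already word-length homogeneous of length $2$. So no relation of the shape ``operation on $b_1$ $= c\,b_2 + b_1^2$ with $c\neq 0$'' can exist, and primary mod-$2$ homology $\mathbb{E}_2$-operations see no discrepancy at all. Your own remark that the $BU$ vs.\ $\bigvee MU(n)$ discrepancy is ``secondary'' is the warning sign: one must leave homology.

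The paper's proof replaces your homology operation with a power operation valued in stable homotopy. Given an $\mathbb{E}_2$-ring $X$ and a class in $\pi_2(X)$, one gets an $\mathbb{E}_2$-map $\Sigma^\infty_+\Omega^2S^4\to X$, and precomposition with the map $S^5\to\Omega^2S^4$ adjoint to the Hopf map $S^7\to S^4$ defines an operation $\nu^s:\pi_2(X)\to\pi_5(X)$ (an ``unstable $\nu$'' visible in any $\mathbb{E}_2$-algebra). Applied to the bottom cell of $\Sigma^\infty_+\Omega SU(n)$, the operation is computed unstably in $\pi_5(\Omega SU(n))\cong\pi_5(BU)=0$ for $n>3$, hence vanishes. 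Applied to the bottom cell of the graded side, the grading forces the answer to lie in the word-length-$1$ piece $\Sigma^\infty F_{n,1}\simeq\Sigma^\infty\CP^{n-1}$, where it is the product $\nu\cdot(\Sigma^\infty\beta_r)$; by Liulevicius this generates $\pi_5(\Sigma^\infty\CP^\infty)^\wedge_2\cong\mathbb{Z}/2$ and is nonzero already on $\CP^{n-1}$ for $n>3$. This is where $n\geq 4$ enters --- not, as you guessed, through room for a decomposable correction term, but through the two isomorphisms $\pi_5(\Omega SU(n))\cong\pi_5(BU)$ and $\pi_5(\Sigma^\infty\CP^{n-1})\cong\pi_5(\Sigma^\infty\CP^\infty)$. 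If you want to salvage your approach, you would need to replace the homology operations by this homotopy-level operation (or some other invariant detecting $\nu$, which has positive Adams filtration and is invisible to primary $\mathbb{F}_2$-homology operations).
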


However, if one is willing to work in a complex-oriented theory, such as ordinary homology, the situation improves:

\begin{thm} \label{thm:MainMUE2}
Let $MU$ denote the $\mathbb{E}_\infty$-ring spectrum of complex bordism.  Suppose that $R_1$ and $R_2$ are any two (ungraded) $\mathbb{E}_2$-algebras with the same underlying $\mathbb{A}_\infty$-ring $\Sigma^{\infty}_+ \Omega SU(n)$.  Then there there is an equivalence of $\mathbb{E}_2$-$MU$-algebras
$$MU \smsh R_1 \simeq MU \smsh R_2.$$
\end{thm}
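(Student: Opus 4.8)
The plan is to reduce the theorem to a rigidity statement for $\E_2$-$MU$-algebras whose homotopy is even and polynomial over $MU_*$. Since $-\smsh MU\colon\Alg_{\E_2}(\Sp)\to\Alg_{\E_2}(\mathrm{Mod}_{MU})$ is symmetric monoidal, $MU\smsh R_1$ and $MU\smsh R_2$ are two $\E_2$-$MU$-algebras with a common underlying $\mathbb{A}_\infty$-$MU$-algebra $B:=MU\smsh\Sigma^\infty_+\Omega SU(n)$, and it suffices to prove that $B$ admits, up to equivalence, at most one $\E_2$-$MU$-algebra structure lying over its given $\mathbb{A}_\infty$-structure. The key fact is that $\pi_*B\cong MU_*[b_1,\dots,b_{n-1}]$ with $|b_i|=2i$, which follows from Bott's theorem $H_*(\Omega SU(n);\Z)\cong\Z[b_1,\dots,b_{n-1}]$ and a degenerate Atiyah--Hirzebruch spectral sequence, everything being concentrated in even degrees. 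Consequently $B$ is connective with degreewise finitely generated homotopy, $\pi_*B$ is a smooth (polynomial) algebra on even-degree generators over the even connective ring $MU_*$, and the topological cotangent complex is a finitely generated free $B$-module $L_{B/MU}\simeq\bigoplus_{i=1}^{n-1}\Sigma^{2i}B$ on even-degree generators.

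For the rigidity I would run obstruction theory for $\E_2$-structures along the Postnikov tower of $B$. By Dunn additivity $\E_2\simeq\E_1\otimes\E_1$, an $\E_2$-refinement of the fixed $\mathbb{A}_\infty$-$MU$-algebra $B$ is precisely an associative-algebra structure on $B$ in the symmetric monoidal $\infty$-category $\Alg_{\E_1}(\mathrm{Mod}_{MU})$, and the deformation theory of $\E_k$-algebras (following Lurie and Francis) exhibits the space of such refinements as the limit of a tower of principal fibrations whose layers are governed by topological Andr\'e--Quillen cohomology groups of $B$ over $MU$ with coefficients in shifts of the Postnikov layers $H\pi_jB$. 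One must then check that the classes obstructing a path between the two refinements coming from $R_1$ and $R_2$ all lie in bidegrees in which these groups vanish. Because $L_{B/MU}$ is free on even-degree generators and each $\pi_jB$ is concentrated in even degree, these groups are supported only in a sparse, explicitly describable set of bidegrees; a degree count --- using that the $\E_2$-operad is built from the $\E_1$-operad with a degree shift corresponding on homotopy to the degree $+1$ Browder bracket --- then shows this support avoids the bidegrees where the comparison obstructions occur. Granting this, all obstructions vanish, the tower converges (as $\pi_*B$ is connective and degreewise finitely generated), $B$ has at most one $\E_2$-$MU$-refinement, and hence $MU\smsh R_1\simeq MU\smsh R_2$ as $\E_2$-$MU$-algebras. (It may help to note, via Theorem \ref{thm:MainAoo} and Example \ref{exm:Thom}, that $B$ is the base change of an explicit associated-graded, Thom-spectrum model whose $J$-homomorphism becomes null over $MU$, giving a transparent split representative to compute against.)

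The step I expect to be the main obstacle is this last degree count: making the $\E_2$-deformation theory precise enough to pin down exactly which topological Andr\'e--Quillen cohomology groups the comparison obstructions inhabit --- in particular to control the suspension distinguishing the $\E_2$-obstructions from the $\mathbb{A}_\infty$-ones --- and to verify complementarity with the (sparse, even-generator-controlled) support of $\mathrm{TAQ}^*_{MU}(B)$. It is essential to work over $MU$ rather than $\mathbb{S}$: over the sphere, $\pi_*\Sigma^\infty_+\Omega SU(n)$ carries odd-degree torsion from the positive stable stems, the cotangent complex is no longer even, and the relevant obstruction genuinely survives --- precisely the phenomenon of Theorem \ref{thm:MainObstruction}.
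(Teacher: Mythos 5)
Your reduction is fine: since $MU\smsh-$ is symmetric monoidal, it suffices to show that the $\mathbb{A}_\infty$-$MU$-algebra $B=MU\smsh\Sigma^{\infty}_+\Omega SU(n)$ has an essentially unique $\mathbb{E}_2$-refinement, and the evenness of $\pi_*B\cong MU_*[b_1,\dots,b_{n-1}]$ is indeed the engine of the whole argument (it is exactly the classical vanishing $MU_{2*+1}(\Omega SU(n))=0$ that the paper isolates). But the core of your proof is the step you yourself flag as the main obstacle, and it is a genuine gap rather than a routine verification. The obstruction theory you invoke --- Postnikov-tower deformation theory for the space of $\mathbb{E}_2$-refinements of a \emph{fixed} $\mathbb{A}_\infty$-structure --- is not an off-the-shelf application of $\mathrm{TAQ}$: via Dunn additivity you are classifying $\mathbb{E}_1$-structures on an object of $\Alg_{\mathbb{E}_1}(\mathrm{Mod}_{MU})$, which is not a stable category, so the standard square-zero/cotangent-complex formalism does not apply verbatim. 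Moreover, the module $L_{B/MU}\simeq\bigoplus_i\Sigma^{2i}B$ you compute is the associative (or $\mathbb{E}_\infty$) cotangent complex; the groups controlling $\mathbb{E}_2$-deformations are $\mathbb{E}_2$-Hochschild/factorization-homology type invariants, which for a polynomial algebra are computable but are not simply free on even generators. The parity heuristic (even input, odd obstructions) is plausible and is consistent with the fact that the nonvanishing obstruction of Theorem \ref{thm:MainObstruction} lives in odd degree, but without pinning down the exact bidegrees your argument does not close.

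For comparison, the paper works on the source rather than the target: it exploits the fact that $\Omega SU(n)\simeq\Omega^2BSU(n)$ is a double loop space, so by the Chadwick--Mandell obstruction theory the forgetful map from $\mathbb{E}_2$-ring maps $\Sigma^{\infty}_+\Omega SU(n)\to\tau_{\le q}R$ to $\mathbb{A}_\infty$-ring maps sits in a ladder of principal fibrations whose layers are ordinary mapping spaces $\cS_*(BSU(n),K(\pi_qR,q+3))$ and $\cS_*(SU(n),K(\pi_qR,q+2))$. The obstructions then become ordinary cohomology groups of $BSU(n)$ and $SU(n)$, which vanish or surject for elementary reasons (even cells of $BSU(n)$, exterior cohomology of $SU(n)$). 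Applying this to the unit maps $\Sigma^{\infty}_+\Omega SU(n)=R_i\to MU\smsh R_i$ produces $\mathbb{E}_2$-$MU$-algebra equivalences from the standard model to each $MU\smsh R_i$. If you want to pursue your target-side route, you would need to either develop the relative $\mathbb{E}_1$-in-$\Alg_{\mathbb{E}_1}$ obstruction theory from scratch or find a reference that does, and then actually carry out the degree count; the paper's source-side argument avoids this entirely.
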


For Theorem \ref{thm:MainMUE2} to be of interest, it is necessary to exhibit exotic $\mathbb{E}_2$-algebra structures on $\Sigma^{\infty}_+ \Omega SU(n)$.  Example \ref{exm:Thom} gives some idea of how this may be accomplished via Thom spectra, and we end Section \ref{sec:MRFil} with a sketch of the following:

\begin{cnstr} \label{cnstr:IntroGr}
There exists a graded $\mathbb{E}_2$-algebra structure on $\text{gr}(\Sigma^{\infty}_+ \{F_{n,k}\})$ extending the canonical graded $\mathbb{A}_\infty$-algebra structure.
\end{cnstr}

\begin{rmk}
Theorem \ref{thm:MainMUE2}, combined with Construction \ref{cnstr:IntroGr}, may be seen as a once-looped analogue of work of Kitchloo \cite{Kitchloo}.   Kitchloo studied a splitting, due to Miller \cite{MillerSplitting}, of $\Sigma^{\infty}_+ SU(n)$.  His theorem is that, \textit{for complex-oriented $E$}, the corresponding direct sum decomposition of $E_*(SU(n))$ is multiplicative.
\end{rmk}

Our proof of Theorem \ref{thm:MainMUE2} is by obstruction theory.  We show in Section \ref{sec:MUE2} that all obstructions to an $\mathbb{E}_2$ equivalence vanish.  On the other hand, we prove Theorem \ref{thm:MainObstruction} by explicitly calculating a non-zero obstruction in Section \ref{sec:Obstruction}.  It remains to discuss Theorem \ref{thm:MainAoo}, the $\mathbb{A}_\infty$ splitting of $\Sigma^{\infty}_+ \Omega SU(n)$, which is the central result of our paper.

To prove that a filtered spectrum
$$A_0 \longrightarrow A_1 \longrightarrow A_2 \longrightarrow A_3 \longrightarrow \cdots$$
splits, it suffices to provide splitting maps in the form of a ``cofiltered spectrum''
$$A_0 \longleftarrow A_1 \longleftarrow A_2 \longleftarrow A_3 \longleftarrow \cdots$$
In Section \ref{sec:FilGra} we make this statement precise (with the proof in Appendix \ref{app:SplittingMachine}) by explaining the following theorem:

\begin{customthm}{\ref{thm:SplitMachine}}
Let $X\in \Alg_{\E_n}(\Fil(\Sp))$ be an $\E_n$ filtered spectrum.  Suppose there exists an $\E_n$ cofiltered spectrum $Y\in \Alg_{\E_n}(\Cofil(\Sp))$ with the following two properties:
\begin{enumerate}
\item There is an equivalence $\mathrm{colim } X \simeq \lim Y$ of $\E_n$-algebras in spectra.
\item The resulting natural maps $X_i \to Y_i$ are equivalences.  
\end{enumerate}
Then, the filtered spectrum $X$ is $\E_n$-split.
\end{customthm}

We wish to apply this theorem in the case $n=1$, where $X$ is the $\mathbb{E}_1 = \mathbb{A}_\infty$ filtered spectrum in Theorem \ref{thm:BottIsAoo}.  In Section \ref{sec:MultWeiss}, we produce the corresponding $\mathbb{A}_\infty$ cofiltered spectrum; the proof is then finished in Section \ref{sec:AooSplit}.  To do this, we extend the methods of \cite{Arone}, who used Weiss calculus to give an elegant second proof of the Mitchell--Richter splitting (without multiplicative structure).  

Arone's idea is to use additional functoriality present in the filtration of $\Omega SU(n)$.  Let $\J$ denote the topological category of complex vector spaces and embeddings, fix a complex vector space $V$, and consider the functor $$G_V:\J \longrightarrow \mathcal{S}\text{paces}$$
given by $G_V(W)=\J(V,V \oplus W)$.  Observe that the special unitary group $SU(V \oplus \mathbb{C})$ arises as the value $G_V(\mathbb{C}) = \J (V,V\oplus \mathbb{C})$.  Weiss calculus provides a toolbox with which to study functors similar to $G_V$ -- a brief review of the theory is provided at the beginning of Section \ref{sec:MultWeiss}.  

The Bott filtration in fact arises from a sequence of functors $$F_0(W) \to F_1(W) \to F_2(W) \to \cdots \to F(W) := \Sigma^{\infty}_+ \Omega \J(V, V\oplus W)$$ from $\J$ to spectra.  This filtration has the key property that the successive quotients $F_n/F_{n-1}$ are \emph{homogeneous functors of degree n}.  Arone then applies an argument of Goodwillie \cite[Example 1.20]{GoodwillieIII} to see that in this situation, the Weiss polynomial approximation $P_n F(W)$ is precisely the functor $F_n(W)$.  The approximations $$P_n F(W) \rightarrow P_{n-1} P_n F(W) \simeq P_{n-1} F(W)$$ provide Arone with splitting maps.

To obtain an $\mathbb{A}_\infty$ splitting, it is no longer sufficient to merely provide splitting maps.  As explained by Theorem \ref{thm:SplitMachine}, we instead require an $\mathbb{A}_\infty$ structure on the whole system of splitting maps, considered as a cofiltered spectrum.  This will arise from combining two statements: the first is that the functor $F(W)$ takes values in $\mathbb{A}_\infty$ ring spectra, and thus gives an $\mathbb{A}_\infty$ object in the category $\Sp^{\J}$ of functors from $\J$ to spectra.  The second is that the Weiss tower can be viewed as a symmetric monoidal functor from $\Sp^{\J}$ to cofiltered objects in $\Sp^{\J}$.  This observation, which may be of independent interest, is likely known to experts and was suggested to us by Jacob Lurie, but we could not locate in the literature.  We have proven it in the following form, where $\Sp^{\J}_{\text{conv}}$ denotes restriction to certain conveniently convergent functors:
\begin{customthm}{\ref{thm:weissmonoidal}}
The Weiss tower defines a symmetric monoidal functor $$\text{Tow}: \Sp^{\J}_{\text{conv}} \to \Cofil(\Sp^{\J}_{\text{conv}}).$$
\end{customthm}
\begin{rmk*}
In fact, the convergence hypotheses were made for convenience and are not necessary.  Furthermore, the theorem works just as well in the context of Goodwillie calculus -- see Remark \ref{rmk:goodwilliecase}.  
\end{rmk*}

There are a number of natural and presumably approachable open questions suggested by our work here.  In addition to Question \ref{qst:BottE2}, we highlight the following:

\begin{qst}
Is the Mitchell--Richter filtration of the loop space of a Stiefel manifold always filtered $\mathbb{A}_\infty$?  This is the only obstruction to promoting Theorem \ref{thm:MainAoo} to a result about all such loop spaces.
\end{qst}

\begin{qst}
What are the proper motivic analogues of our results, phrased in the category of $\mathbb{A}^1$-invariant Nisnevich sheaves? 
\end{qst}

\begin{qst}
What are the proper equivariant analogues of our result?  See for example \cite{Ullman}, \cite{Tynan}.
\end{qst}

\subsection*{Acknowledgements}
The authors thank Greg Arone, Tom Bachmann, Lukas Brantner, Dennis Gaitsgory, Akhil Mathew, Haynes Miller, Denis Nardin, and Bill Richter for helpful conversations and the anonymous referee for numerous clarifying comments.  We were saddened to learn of the passing of Steve Mitchell during the preparation of this manuscript--his papers were a deep inspiration.  Special thanks are due to the authors' PhD advisors, Mike Hopkins and Jacob Lurie, both for their mathematical expertise and their consistent encouragement; many of the ideas in this paper grew out of their suggestions.  Special thanks are also due to Justin Campbell, James Tao, David Yang, and Yifei Zhao, all of whom spent numerous and invaluable hours answering naive questions about the Beilinson--Drinfeld Grassmannian.  The authors were supported by NSF Graduate Fellowships under Grants DGE-114415 and 1122374.

\subsection*{Notation and conventions:} 
\begin{enumerate}
\item As through the introduction, we will freely use the word category to refer to a not-necessarily truncated $\infty$-category and will specify when we explicitly work with $1$-categories.  
\item $\Sp$ denotes the category of spectra and $\mathcal{S}$ denotes the category of spaces.  
\item We will often conflate a graded spectrum with its underlying spectrum unless there is potential confusion.  
\item $\Z_{\geq 0}$ denotes the poset of non-negative integers as an ordinary category where $\Hom(a,b)$ is a singleton if $a\leq b$, and empty otherwise.  Denote by $\Z_{\geq 0}^{ds}$ the underlying set considered as a category with no morphisms.  We will implicitly take nerves to obtain $\infty$-categories which will serve as the indexing sets for filtered and graded spectra.  The reader is warned that our numbering conventions are opposite the ones in \cite{LurieRot}.
\end{enumerate}

\section{Filtered and Graded Ring Spectra} \label{sec:FilGra}

It will be important for us to have a precise language for discussing filtered and graded spectra, what it means to be split, what it means to take associated graded, and the multiplicative aspects of these constructions. Here we review a framework from \cite{LurieRot} for studying graded and filtered objects.  The reader is referred to \cite{LurieRot} for a more thorough treatment and all proofs.  

\subsection{First definitions}
Let $\D$ be an $\infty$-category which we will regard as the diagram category.  Our filtered objects will be valued in the functor category $\Sp^{\D}.$  This will be no more difficult than just ordinary spectra because limits, colimits, and smash products will be considered pointwise; in any case, we will refer to objects of $\Sp^{\D}$ as functors or simply as spectra.  

\begin{dfn} 
Let $\Gr(\Sp^{\D})$ denote the functor category $\Fun(\Z_{\geq 0}^{ds}, \Sp^{\D}).$  We shall refer to $\Gr(\Sp^{\D})$ as the category of graded objects in $\Sp^{\D}$.  Its objects can be thought of as sequences $X_0, X_1,X_2,\cdots \in \Sp^{\D}$.
\end{dfn}

\begin{dfn} 
Let $\Fil(\Sp^{\D})$ denote the functor category $\Fun(\Z_{\geq 0}, \Sp^{\D}).$  We shall refer to $\Fil(\Sp^{\D})$ as the category of filtered objects in $\Sp^{\D}$.  Its objects can be thought of as sequences $Y_0\to Y_1\to Y_2 \to \cdots \in \Sp^{\D}$ filtering $\colim_i Y_i$.  
\end{dfn}

\begin{rmk}\label{rmk:filtspaces}
We will occasionally also consider filtered objects in pointed spaces $\cS_*$, but we do not use them in an essential way in the paper so we restrict to spectra for the remainder of this section.  The issue is that the monoidal structure given by $\times$ does not preserve colimits separately in each variable -- for instance, it does not preserve the empty colimit.  The category $\Fil(\cS_*)$ of filtered pointed spaces nevertheless has a monoidal structure by Day convolution, but $\Gr(\cS_*)$ does not, and thus we do \emph{not} consider a monoidal structure on \emph{graded} pointed spaces.  
\end{rmk}

The obvious map $\Z_{\geq 0}^{ds} \to \Z_{\geq 0}$ induces a restriction functor $\text{res}: \Fil(\Sp^{\D}) \to \Gr(\Sp^{\D})$ which can be thought of as forgetting the maps in the filtered object.  The restriction fits into an adjunction  
$$I:\Gr(\Sp^{\D}) \xrightleftharpoons{\quad} \Fil(\Sp^{\D}) : \text{res}$$
where the left adjoint $I: \Gr(\Sp^{\D}) \to \Fil(\Sp^{\D})$ is given by left Kan extension.  The functor $I$ can be described explicitly as taking a graded object $X_0,X_1,X_2,\cdots$ to the filtered object $$I(X_0, X_1, \cdots) = (X_0\to X_0 \vee X_1\to X_0 \vee X_1 \vee X_2\to \cdots).$$   

Inverse to this, there is an associated graded functor $\text{gr }: \Fil(\Sp^{\D}) \to \Gr(\Sp^{\D})$ such that the composite $\text{gr }\circ I : \Gr(\Sp^{\D}) \to \Gr(\Sp^{\D})$ is an equivalence.   This can be thought of pointwise by the formula $$\text{gr}(X_0\to X_1\to X_2\to \cdots) = (X_0, X_1/X_0, X_2/X_1, \cdots).$$

As the names suggest, one may recover from a filtered or graded functor the underlying object.  For filtered objects, this is a functor $$\colim : \Fil(\Sp^{\D}) \to \Sp^{\D}$$ given by Kan extending along $\Z_{\geq 0}\to *.$   It can be thought of as taking the colimit.  For graded objects, the underlying object is simply the direct sum of all the graded pieces. We will systematically abuse notation by conflating a graded spectrum with its underlying spectrum when we feel there is no potential for ambiguity.


\subsection{Monoidal structures}\label{sect:monoidal}
We now begin studying the monoidal structures on graded and filtered spectra.  We confine ourselves to a basic discussion here, leaving a more technical discussion for Appendix \ref{app:day}.

By \cite{Glasman} or \cite[Example 2.2.6.17]{HA}, the categories $\Gr(\Sp)$ and $\Fil(\Sp)$ may be given symmetric monoidal structures via the Day convolution.  Then, via the identifications $\Gr(\Sp^{\D}) = \Gr(\Sp)^{\D}$ and $\Fil(\Sp^{\D}) = \Fil(\Sp)^{\D}$, the categories $\Gr(\Sp^{\D})$ and $\Fil(\Sp^{\D})$ may be given symmetric monoidal structures pointwise on $\D$.  In both cases, we denote the resulting operation by $\otimes$. Explicitly, the filtered tensor product $$\left(X_0 \longrightarrow X_1 \longrightarrow X_2 \longrightarrow \cdots \right) \otimes \left(Y_0 \longrightarrow Y_1 \longrightarrow Y_2 \longrightarrow \cdots \right)$$
of two filtered spectra is computed as

\begin{center}
$X_0 \smsh Y_0 \longrightarrow \colim $
\adjustbox{scale=0.7} 
{$ \left(\begin{tikzcd} X_0 \smsh Y_1 \\  X_0 \smsh Y_0 \arrow{u} \arrow{r} & X_1 \smsh Y_0 \end{tikzcd} \right) $} 
$\longrightarrow \colim$
\adjustbox{scale=0.7} {$ \left( \begin{tikzcd} X_0 \smsh Y_2 \\ X_0 \smsh Y_1 \arrow{r} \arrow{u} & X_1 \smsh Y_1  \\ X_0 \smsh Y_0 \arrow{r} \arrow{u} & X_1 \smsh Y_0 \arrow{u} \arrow{r} & X_2 \smsh Y_0 \end{tikzcd} \right) $}
$\longrightarrow \cdots.$
\end{center}

For graded spectra, the analogous formula is:

$$(A_0,A_1,A_2,\cdots) \otimes (B_0,B_1,B_2,\cdots) \simeq \left( A_0 \smsh B_0, (A_1 \smsh B_0) \vee (A_0 \smsh B_1), \cdots, \bigvee_{i+j=n} A_i \smsh B_j, \cdots \right).$$

The unit $\mathbb{S}^{gr}_{\D}$ of $\otimes$ in $\Gr(\Sp^{\D})$ is the constant diagram at $S^0$ in degree 0 and $*$ otherwise; the unit $\mathbb{S}^{fil}_{\D}$ in $\Fil(\Sp^{\D})$ is $I\mathbb{S}^{gr}_{\D}.$  We may then talk about $\E_n$-algebras in $\Gr(\Sp^{\D})$ and $\Fil(\Sp^{\D})$ and their modules.

\begin{rmk}
One consequence of this is that one can recover filtered spectra as a module category inside the category of graded spectra.  There is a symmetric monoidal functor $\mathbb{Z}^{ds}_{\ge 0} \to \Sp$ sending each integer to $S^0$; this yields an $\mathbb{E}_\infty$-algebra $A=\Sigma^{\infty}_+ \mathbb{Z}^{ds}_{\ge 0}$ in $\Gr(\Sp)$.  The spectrum underlying $A$ is an infinite wedge of copies of $\mathbb{S}^0$.  Given a filtered spectrum $X$, $\mathrm{res }(X)$ acquires an action of $A$ where the $S^0$ in degree $1$ acts by ``shifting filtration.''  In fact, we have the following, as proven in \cite[Proposition 3.1.6]{LurieRot}:

\begin{lem} \label{lem:FilAsGrMod}
The functor $\mathrm{res}$ lifts to an equivalence
$$\Fil(\Sp) \stackrel{\simeq}{\longrightarrow} \mathbf{Mod}_{A}(\Gr(\Sp))$$
of symmetric monoidal $\infty$-categories.
\end{lem}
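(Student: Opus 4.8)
The plan is to exhibit $\mathrm{res}$ as the composite of a monadic functor with a forgetful functor, and then to identify the two monads that appear. Since $\mathrm{res}\colon\Fil(\Sp)\to\Gr(\Sp)$ is restriction along $\Z_{\geq 0}^{ds}\to\Z_{\geq 0}$, it preserves all limits and colimits (both are computed pointwise in the functor categories) and is conservative; as it admits the left adjoint $I$, the Barr--Beck--Lurie theorem gives an equivalence $\Fil(\Sp)\simeq\mathbf{Mod}_T(\Gr(\Sp))$ onto modules over the monad $T=\mathrm{res}\circ I$. Feeding in the explicit formula $I(X)_n\simeq\bigvee_{i\le n}X_i$ for the left Kan extension, one finds $T(X)_n\simeq\bigvee_{i\le n}X_i\simeq\bigvee_{i+j=n}S^0\smsh X_j=(A\otimes X)_n$, and unwinding the adjunction identifies the monad structure on $T$ with the one on $A\otimes(-)$ induced by the multiplication of $A=\Sigma^{\infty}_+\Z_{\geq 0}^{ds}$ --- equivalently, $A\simeq\mathrm{res}(\mathbb{S}^{fil})$ as $\E_\infty$-algebras, the product being the evident $S^0\smsh S^0\to S^0$ in bidegree $(i,j)\mapsto i+j$ on both sides. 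Hence $\mathbf{Mod}_T(\Gr(\Sp))=\mathbf{Mod}_A(\Gr(\Sp))$, and the composite $\Phi\colon\Fil(\Sp)\xrightarrow{\sim}\mathbf{Mod}_A(\Gr(\Sp))$ is the asserted equivalence of underlying $\infty$-categories; on objects it sends $Y$ to $\mathrm{res}(Y)$ with the ``shift'' $A$-module structure described above.

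To upgrade $\Phi$ to an equivalence of symmetric monoidal $\infty$-categories, I would first observe that $I$ --- being the left Kan extension of Day convolutions along the identity-on-objects functor $\Z_{\geq 0}^{ds}\to\Z_{\geq 0}$, which is symmetric monoidal for the additive structure on both sides --- is strong symmetric monoidal, so that $\mathrm{res}$ is lax symmetric monoidal, $\mathrm{res}(\mathbb{S}^{fil})\simeq A$ as $\E_\infty$-algebras, and $\Phi$ inherits a canonical \emph{lax} symmetric monoidal structure (a lax monoidal functor sends modules over the unit to modules over the image of the unit). To see the structure is strong, I would check that the comparison maps $\Phi(X)\otimes_A\Phi(Y)\to\Phi(X\otimes Y)$ are equivalences. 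As $\mathrm{res}$, the filtered tensor product, and the relative tensor $\otimes_A$ all preserve colimits and finite limits, both sides are exact and colimit-preserving in each variable separately, so it is enough to check the claim on a set of compact generators; by exactness one may take $X=I(E)$ and $Y=I(E')$ with $E,E'$ graded spectra concentrated in a single degree. On such objects $\Phi\circ I$ is the free $A$-module functor $E\mapsto A\otimes E$, which is symmetric monoidal (it is base change along $\mathbb{S}^{gr}\to A$), while $I$ is strong monoidal; chaining these, the comparison map on $I(E)$, $I(E')$ becomes the evident identification $(A\otimes E)\otimes_A(A\otimes E')\simeq A\otimes(E\otimes E')$, which is an equivalence. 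Therefore $\Phi$ is a symmetric monoidal equivalence.

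The only step that really needs care is this symmetric monoidal upgrade --- and, within it, the point that the correct tensor product on the module side is the \emph{relative} tensor $\otimes_A$, so that one cannot merely transport the filtered Day convolution along $I$: the would-be projection-formula map $I(X\otimes\mathrm{res}(Y))\to I(X)\otimes Y$ fails to be an equivalence, as one already sees in filtered degree $1$, where the domain has an extra wedge summand $X_0\smsh Y_0$. Reducing the comparison to free modules on one-degree generators, where $\otimes_A$ is computed formally, is exactly what sidesteps this. A detailed account of all of this is \cite[Proposition 3.1.6]{LurieRot}.
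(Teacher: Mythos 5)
The paper itself does not prove Lemma \ref{lem:FilAsGrMod}: the surrounding remark states the lemma and cites \cite[Proposition 3.1.6]{LurieRot} for it, so there is no in-paper argument to compare yours against. Evaluating your sketch on its own, it is sound and is the standard route one would expect. The Barr--Beck--Lurie hypotheses for $\mathrm{res}$ are exactly as you say (left adjoint $I$, conservative, preserves colimits since everything is pointwise), the computation $T(X)_n \simeq \bigvee_{i\le n} X_i \simeq (A\otimes X)_n$ is correct, and the reduction of the strong-monoidality check to the free generators $I(\mathbb{S}[n])$ is legitimate because both sides of the comparison map are colimit-preserving in each variable and $\Fil(\Sp)$ is generated under colimits by those objects. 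Two points you pass over quickly are the ones that actually carry the weight: (a) that the identification $T \simeq A\otimes(-)$ is an identification of \emph{monads}, not merely of underlying endofunctors, and (b) that the Barr--Beck comparison functor coincides with the lax-monoidal lift of $\mathrm{res}$ to $\mathbf{Mod}_A$ that you use in the second half --- both are true and routine, but in a full writeup they are where the content lives. Your closing observation is also correct: the map $I(X\otimes\mathrm{res}(Y))\to I(X)\otimes Y$ does fail (in degree $1$ its cofiber is $\Sigma(X_0\smsh Y_0)$); note though that this is the ``wrong-way'' projection formula, while the one relevant to a module-category identification, namely $\mathrm{res}(Y)\otimes X \to \mathrm{res}(Y\otimes I(X))$, \emph{does} hold, and quoting that plus a general monoidal Barr--Beck statement would be an alternative to your generators argument.
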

\end{rmk}

The functors $I$ and $\text{gr}$ can be given symmetric monoidal structures such that the composite $\text{gr}\circ I : \Gr(\Sp^{\D}) \to \Gr(\Sp^{\D})$ is a symmetric monoidal equivalence by \cite[Proposition 3.2.1]{LurieRot}.  It follows in particular that they extend to functors between the categories of $\E_n$-algebras in $\Gr(\Sp^{\D})$ and $\Fil(\Sp^{\D})$.  Thus, given an $\E_n$-algebra $Y$ in filtered spectra, we obtain a canonical $\E_n$ structure on its associated graded $\text{gr}(Y).$  Conversely, given $X\in \Alg_{\E_n}(\Gr(\Sp^{\D}))$, we obtain $IX\in \Alg_{\E_n}(\Fil(\Sp^{\D})).$  

\subsection{Structured splittings}
In this paper, we aim to study filtered spectra which are split in a way compatible with multiplicative structure.  This is captured by the following definition:

\begin{dfn}
A filtered $\mathbb{E}_n$-algebra $X\in \Alg_{\E_n}(\Fil(\Sp^{\D}))$ is called \emph{$\E_n$-split} if there exists some $Y \in \Alg_{\E_n}(\Gr(\Sp^{\D}))$ and an equivalence $X \simeq IY$ in $\Alg_{\E_n}(\Fil(\Sp^{\D}))$.  
\end{dfn}

Given an $\E_n$-split filtered spectrum $X$, we can recover the underlying graded spectrum by taking the associated graded.  

\begin{exm}\label{exm:snaith}
In this example, we relate the Snaith splitting to the above notions of $\E_n$-split filtered spectrum.  Since the functors $\mathrm{res}$ and $\Omega^{\infty}$ are lax monoidal, we may consider the commutative diagram of right adjoints
$$
\begin{tikzcd}
\Gr(\Sp) & \Fil(\Sp) \arrow[l,"\mathrm{res}"] \arrow[r,"\Omega^{\infty}"] & \Fil(\cS_*) \\
\Alg_{\E_n}(\Gr(\Sp)) \arrow[u] & \Alg_{\E_n}(\Fil(\Sp))  \arrow[l,"\mathrm{res}"] \arrow[r,"\Omega^{\infty}"] \arrow[u]& \Alg_{\E_n}(\Fil(\cS_*)) \arrow[u]
\end{tikzcd}
$$
where the vertical maps forget the algebra structure.  This induces a corresponding commutative diagram of left adjoints
$$
\begin{tikzcd}
\Gr(\Sp) \arrow[r,"I"] \arrow[d,"F_{\E_n}"] & \Fil(\Sp)\arrow[d,"F_{\E_n}"] & \Fil(\cS_*) \arrow[d,"F_{\E_n}"]\arrow[l,"\Sigma^{\infty}"]\\
\Alg_{\E_n}(\Gr(\Sp))\arrow[r,"I"] & \Alg_{\E_n}(\Fil(\Sp))  & \Alg_{\E_n}(\Fil(\cS_*)) \arrow[l,"\Sigma^{\infty}_+"]
\end{tikzcd}
$$
where the vertical maps take free algebras.  Let $E$ be a spectrum and consider $E[1]$, the graded spectrum with $E$ in degree $1$.  The commutativity of the left square shows that the free $\E_n$-algebra in filtered spectra on $I(E[1])$ is $\E_n$-split.  Moreover, let $X\in \cS_*$ be a pointed space and $n>0$; work of \cite{May} determines the free $\E_n$-algebra on $X$ in pointed spaces to be $\Omega^n \Sigma^n X$, which comes with a canonical filtration.  We claim without proof that this is the free $\E_n$-filtered space on $X$ placed in filtration $1$.  The right square in this diagram verifies that its suspension is the free $\E_n$-algebra in filtered spectra on $I(\Sigma^{\infty} X [1])$, and thus, by the previous remarks, yields an $\E_n$-split filtered spectrum, demonstrating a structured version of the Snaith splitting \cite{SnaithSplit}.
\end{exm}

In this paper, we will be interested in when a given $\E_n$ filtered spectrum is $\E_n$-split.  Disregarding the multiplicative structure, a filtered spectrum $$X_0\longrightarrow X_1 \longrightarrow X_2 \longrightarrow \cdots ,$$ will split if and only if there are maps going the other way: $$X_0 \longleftarrow X_1 \longleftarrow X_2 \longleftarrow \cdots,$$ with the property that the relevant composites are equivalences.   To systematically talk about these backwards maps, we need the following definition:

\begin{dfn} Let $\Cofil(\Sp^{\D})$ denote the functor category $\Fun(\Z_{\geq 0}^{op}, \Sp^{\D}).$  We shall refer to $\Cofil(\Sp^{\D})$ as the category of cofiltered objects in $\Sp^{\D}$.  Its objects can be thought of as towers of functors $Y_0\leftarrow Y_1\leftarrow Y_2 \leftarrow \cdots \in \Sp^{\D}$.
\end{dfn}

In Appendix \ref{app:day}, we show how to give $\Cofil(\Sp^{\D})$ the structure of a symmetric monoidal $\infty$-category.  One might then correctly surmise that producing a multiplicatively structured splitting involves the multiplicativity of this opposite filtration.  In particular, we have the following criterion, which we prove in Appendix \ref{app:SplittingMachine} (we now switch from $\Sp^{\D}$ to $\Sp$ for ease of notation):

\begin{thm}\label{thm:SplitMachine}
Let $X\in \Alg_{\E_n}(\Fil(\Sp))$ be an $\E_n$ filtered spectrum.  Suppose there exists an $\E_n$ cofiltered spectrum $Y\in \Alg_{\E_n}(\Cofil(\Sp))$ with the following two properties:
\begin{enumerate}
\item There is an equivalence $\mathrm{colim } X \simeq \lim Y$ of $\E_n$-algebras in spectra.
\item The resulting natural maps $X_i \to Y_i$ are equivalences.  
\end{enumerate}
Then, the filtered spectrum $X$ is $\E_n$-split.
\end{thm}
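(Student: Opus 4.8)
The plan is to leverage Lemma \ref{lem:FilAsGrMod}, which identifies $\Fil(\Sp)$ symmetric monoidally with $\mathbf{Mod}_A(\Gr(\Sp))$ for the $\E_\infty$-algebra $A = \Sigma^\infty_+ \Z^{ds}_{\geq 0}$. Under this identification, the essential image of $I: \Gr(\Sp) \to \Fil(\Sp)$ consists of those $A$-modules that are ``free'' in the appropriate sense, i.e.\ extended from the graded unit along $\mathbb{S}^{gr} \to A$; being $\E_n$-split then amounts to producing an $\E_n$-$A$-module structure on $\mathrm{res}(X)$ that is induced from an $\E_n$-algebra in $\Gr(\Sp)$. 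The intervening hypotheses give us exactly the raw materials: a cofiltered $\E_n$-algebra $Y$ whose limit matches $\mathrm{colim}\, X$ and whose levelwise comparison maps to $X$ are equivalences. So the task is to manufacture the grading out of the interplay between the forward maps (from $X$) and the backward maps (from $Y$).

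First I would set up the composite maps. Since $X_i \to Y_i$ is an equivalence for each $i$, we may invert it and, for each $i \le j$, form the composite $X_j \to Y_j \to Y_i \simeq X_i$ (the middle arrow being the structure map of the cofiltered object $Y$). These are ``partial splittings'' of the filtration, and the key point is that they are compatible with the whole tower, since they come from an honest cofiltered spectrum rather than being chosen one at a time --- this coherence is precisely what is not available in Arone's construction of splitting \emph{maps}, and it is what the $\Cofil$ formalism buys us. Next I would package the pair $(X, Y)$ together with the levelwise equivalences as a single object: concretely, a functor out of $\Z_{\geq 0} \times [1]$ (or better, a $\Z_{\geq 0}$-indexed diagram of arrows $X_i \xrightarrow{\sim} Y_i$) which is simultaneously filtered in the first variable and, after inverting the equivalences, carries the cofiltered structure. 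The Day convolution monoidal structures on $\Fil$, $\Cofil$, and on this combined ``bifiltered'' category are all compatible (this is where Appendix \ref{app:day} is invoked), so this packaging can be done $\E_n$-algebraically: $X$, $Y$, and the equivalence between their underlying objects assemble to an $\E_n$-algebra in the relevant functor category.

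The heart of the argument is then to extract a graded $\E_n$-algebra $Z$ with $IZ \simeq X$. I would do this by a Rees-type or ``interpolation'' construction: the object encoding both a filtration and a splitting of it is equivalent data to an object of $\mathbf{Mod}_{A \otimes A^{op}}$ or, more cleanly, to a graded object together with the data of $I$ applied to it --- the existence of consistent backward maps forces the $A$-module $\mathrm{res}(X)$ to be extended, i.e.\ to lie in the image of the free functor $\Gr(\Sp) \to \Fil(\Sp) = \mathbf{Mod}_A(\Gr(\Sp))$. Once one knows $\mathrm{res}(X)$ is a free/extended $A$-module, the generating graded object is recovered as (a shift of) the associated graded, and condition (1) ensures that passing to the colimit recovers the prescribed $\E_n$-ring $\mathrm{colim}\, X$ on the nose, so nothing is lost. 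Tracking the $\E_n$-structure through each of these steps is routine given that every functor in sight ($I$, $\mathrm{res}$, $\mathrm{gr}$, $\mathrm{colim}$, $\lim$, and the monoidal comparison functors) has been arranged to be symmetric monoidal or lax symmetric monoidal, hence passes to $\Alg_{\E_n}$.

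I expect the main obstacle to be the second step: rigorously assembling $X$, $Y$, and the levelwise equivalences into a \emph{single} $\E_n$-algebra object in an appropriate monoidal category of diagrams, in a way that simultaneously remembers the forward maps, the backward maps, and the fact that they are mutually inverse at each level. Coherently inverting an equivalence of $\E_n$-algebras is not automatic at the point-set level and requires working invariantly --- one wants a localization or a span-category presentation of $\Cofil$ versus $\Fil$ so that the two structures genuinely live on the same object. Making this precise, and checking that the resulting monoidal structure is the Day convolution one so that the final descent to $\Gr(\Sp)$ via Lemma \ref{lem:FilAsGrMod} is monoidal, is the technical crux; this is presumably why the authors defer it to Appendix \ref{app:SplittingMachine}.
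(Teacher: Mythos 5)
Your high-level plan is close in spirit to what the paper does --- both approaches package $X$, $Y$, and the levelwise equivalences into a single object of a diagram category and then try to recognize that object as a graded $\E_n$-algebra --- but the step that carries all the content is missing. You assert that ``the existence of consistent backward maps forces the $A$-module $\mathrm{res}(X)$ to be extended, i.e.\ to lie in the image of the free functor,'' and that the generating graded object is then recovered as the associated graded. That assertion \emph{is} the theorem (restated in the language of Lemma \ref{lem:FilAsGrMod}); no mechanism is offered for proving it. A ``Rees-type or interpolation construction'' is named but not constructed, and it is precisely here that one must show the combined filtered/cofiltered datum is \emph{equivalent}, as an $\E_n$-algebra in a symmetric monoidal category, to a graded $\E_n$-algebra --- not merely that it has the same underlying spectrum levelwise.

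The paper's route through this bottleneck is worth contrasting with your proposed one, because it also dissolves the difficulty you flag at the end. Rather than localizing or passing to a span category to make the forward and backward structures ``live on the same object,'' the paper forms, for each finite stage $n$, the pullback $\C_n = \Fil_n^+ \times_{\Sp} \Cofil_n^+$ (sequences $X_0 \to \cdots \to X_n \to Z \to Y_n \to \cdots \to Y_0$) and considers the \emph{full subcategory} $\mathcal{G}_n$ where each composite $X_i \to Y_i$ is an equivalence. The key lemma, proved by induction on $n$ by splitting off fibers one level at a time, is that $\mathcal{G}_n \simeq \Gr_n^+$; Day convolution makes everything symmetric monoidal, and a limit over $n$ (together with a monoidal retract $\Fil \to \Fil^+ \to \Fil$) yields the global statement. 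Condition (1) of the theorem is exactly what is needed to glue $\iota X$ and $\iota^{op} Y$ into a single object of $\Alg_{\E_n}(\C_\infty)$, and condition (2) is then a \emph{property} (membership in the full subcategory $\mathcal{G}_\infty$), so no coherent inversion of equivalences of $\E_n$-algebras is ever required. If you want to pursue your $A$-module formulation instead, you would still need an analogue of this inductive identification to prove that the splitting data exhibits $\mathrm{res}(X)$ as induced; as written, the proposal identifies the crux correctly but does not cross it.
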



\section{The \texorpdfstring{$\mathbb{E}_2$}{E2} Schubert Filtration} \label{sec:Schubert}

In the next section we will define the Mitchell--Richter Bott filtration on $\Omega SU(n)$ and prove that it makes $\Sigma^{\infty}_+ \Omega SU(n)$ into a filtered $\mathbb{A}_\infty$-ring spectrum.  The filtration, especially as a multiplicative object, is most naturally described in the language of algebraic geometry.  Our aim in this section will be to recall the complex points of these algebro-geometric objects and extract from both the algebro-geometric and topological literature all of the basic facts about these complex points that we will later need.  None of the ideas in this section are original: the objects we study are due to Beilinson and Drinfeld \cite{BDQuantization}, and their translation into algebraic topology is due to Jacob Lurie  \cite[\S 5.5]{HA}.

\subsection{The Schubert filtration on the Affine Grassmannian}

Fix for the moment a smooth, reductive, affine algebraic group $G$ over $\mathbb{C}$.  Through the rest of the paper, we will be interested only in the cases $G=\mathbb{G}_m,SL_n,$ or $GL_n$, and so the reader may safely restrict their attention to those cases for concreteness.  As we will explain in detail, an algebro-geometric model for $\Omega G(\mathbb{C})$ is the \textit{affine Grassmannian} $Gr_G$.  The $\mathbb{E}_2$-algebra structure present on $\Omega G(\mathbb{C})$ is encoded by a more elaborate object, the \textit{Beilinson--Drinfeld Grassmannian}.  A good general reference for both of these objects is \cite{Zhu}, whose presentation we will more or less follow below.

We use $D$ to denote the formal disk $\text{Spec}(\mathbb{C}[[t]])$ and $D^*$ to denote the punctured disk $\text{Spec}(\mathbb{C}((t)))$.  For $R$ a $\mathbb{C}$-algebra, we use $D_R$ to denote $\text{Spec}(\mathbb{C}[[t]] \hat{\otimes} R)$ and $D^*_R$ to denote $\text{Spec}(\mathbb{C}((t)) \hat{\otimes} R)$.  We refer to a point in the space $G(D^*)=G(\mathbb{C}((t)))$ as an algebraic free (i.e., unbased) loop in $G$.  Such points are exactly automorphisms of the trivial $G$-torsor $\mathcal{E}^0$ over $D^*$.

\begin{dfn}
The \textit{affine Grassmannian} $Gr_G$ of $G$ is the Ind-scheme with functor of points
$$Gr_G(R) = \{(\mathcal{E},\beta)\},\text{ where}$$
$\mathcal{E}$ is a $G$-torsor over $D_{R}$ and $\beta:\mathcal{E}|_{D^*_{R}} \cong \mathcal{E}^0_{D^*_{R}}$ is a trivialization over $D^*_{R}$.
\end{dfn}

The complex points $Gr_G(\mathbb{C})$ are a model for the topological space $\Omega G$ \cite[\S 8.3]{PressleySegal}, \cite[1.6]{Zhu}.  These complex points are given \cite[1.4]{Zhu} by the homogeneous space
$$G(\mathbb{C}((t)))/G(\mathbb{C}[[t]]),$$
which up to homotopy is the quotient of the free loop space on $G$ by the action of $G$.

The above functor of points is not representable by a scheme, but it is a filtered colimit of schemes $Gr_{G,\le \mu}$.  We will define below at least the complex points $Gr_{G, \le \mu}(\mathbb{C})$ of these schemes, which are specific topological subspaces of $Gr_G(\mathbb{C})$.  First, it is necessary to introduce a bit more notation.

Let $T$ denote a maximal torus inside $G$.  We use $\mathbb{X}^{\bullet}$ to denote the lattice of weights $\Hom(T,\mathbb{G}_m)$, and $\mathbb{X}_{\bullet}$ to denote the dual lattice of coweights.  Inside $\mathbb{X}^{\bullet}$ is the set $\Phi$ of roots.  We fix a particular Borel subgroup $B \subset G$, determining a choice of positive roots $\Phi^+ \subset \Phi$ and a semi-group of dominant coweights $\mathbb{X}^+_\bullet \subset \mathbb{X}_\bullet$.  There is a natural bijection
$$\mathbb{X}_{\bullet}^+ \cong G(\mathbb{C}[[t]])\backslash G(\mathbb{C}((t)))/G(\mathbb{C}[[t]])$$
of dominant coweights with the above double cosets.  Each coweight $\mu \in \Hom(\mathbb{G}_m,T)$ defines via the inclusion of $T$ into $G$ an element $t^{\mu}$ in $G(\mathbb{C}((t)))$.  There is a Bruhat decomposition of the algebraic free loop space
$$
G(\mathbb{C}((t))) \cong \coprod_{\mu \in \mathbb{X}_{\bullet}^+} G(\mathbb{C}[[t]]) t^{\mu} G(\mathbb{C}[[t]]).
$$

Projecting onto the affine Grassmannian, one learns that the $G(\mathbb{C}[[t]])$-orbits of $Gr_G$ are indexed by $\mu \in \mathbb{X}_{\bullet}^+$.
We will use $Gr_{G,\le \mu}$ to denote the \textit{closure} of the orbit corresponding to $\mu$.  The closure $Gr_{G, \le \mu_1}$ contains $Gr_{G, \le \mu_2}$ if and only if $\mu_1-\mu_2$ is a sum of dominant coroots \cite[2.1]{Zhu}.  This ordering on the dominant coweights is known as the Bruhat order.  

\begin{dfn} \label{def:coweightfiltered}
An $\mathbb{X}_{\bullet}^+$-filtered topological space $K$ is a poset of topological subspaces of $K$, indexed by $\mathbb{X}_{\bullet}^+$.
\end{dfn}

An example is of course given by $Gr_{G}(\mathbb{C})$, filtered by the $Gr_{G,\le \mu}(\mathbb{C})$.  We will refer to this filtration on $Gr_{G}(\mathbb{C})$ as the \textit{Schubert filtration}.

\begin{rmk} \label{rem:monodromy}
Suppose that $\gamma$ is a principal $G$-bundle on $\mathbb{A}^1$, and that $p$ is the origin in $\mathbb{A}^1$.  Then restriction of $\gamma$ to an infinitesimal neighborhood of $p$ gives a principal $G$-bundle on the formal disk $D$.  If one is then further given a trivialization of $\gamma$ away from $p$, there is then an associated point $x \in Gr_{G}(\mathbb{C})$.  This point $x$ lives in a certain $G(\mathbb{C}[[t]])$-orbit, and thus has well-defined \textit{monodromy} $\mu \in \mathbb{X}_{\bullet}^+$.

Now, if $p$ is \textit{any} point in $\mathbb{A}^1$, then a formal neighborhood of $p$ is isomorphic but not equal to the formal disk $D$.  If given a trivialization of $\gamma$ away from $p$, this means that one does not get a well-defined point $x \in Gr_{G}(\mathbb{C})$, but one \textit{does} obtain a well-defined $G(\mathbb{C}[[t]])$-orbit inside of $Gr_{G}(\mathbb{C})$.  In other words, the trivialization of $\gamma$ away from $p$ has well-defined monodromy $\mu \in \mathbb{X}_{\bullet}^+$.  Notice, in fact, that to define $\mu$ one needs not a trivialization on all of $\mathbb{A}^1 \backslash \{ p \}$, but only a trivialization defined away from $p$ in a neighborhood of $p$.
\end{rmk}

\begin{exm} \label{sl2example}
Suppose $G=SL_2(\mathbb{C})$ with its usual maximal torus.  A coweight $\mu \in \mathbb{X}_\bullet$ consists of a pair $(a,b)$ of integers with $a+b=0$.  We choose a Borel so that a coweight is dominant if $a \ge b$.  The conjugation action of $SL_2(\mathbb{C})$ on $\Omega SL_2(\mathbb{C})$ has one orbit for each pair $(a,-a)$ with $a \ge 0$.  The orbit corresponding to $(a,-a)$ contains the loop $\mathbb{G}_m \rightarrow \Omega SL_2(\mathbb{C})$ given by
$$
t \mapsto \left( \begin{array}{cc} t^a & 0 \\ 0 & t^{-a}  \end{array} \right).
$$
The closure of the $(a,-a)$ orbit contains the $(b,-b)$ orbit if and only if $b \le a$.  To topologists, $\Omega SL_2(\mathbb{C}) \simeq \Omega \Sigma S^2$ is recognizable as the free $\mathbb{A}_\infty$-algebra on the pointed space $S^2$.  In particular, $Gr_{SL_2}(\mathbb{C})$ is naturally equipped with the James filtration by word length.  The closure of the $(a,-a)$ orbit, denoted $Gr_{SL_2,\le (a,-a)}(\mathbb{C})$, turns out to be the $(2a)$th component of the James filtration.

Thus, the Schubert filtration is strictly coarser than the James filtration.  In particular, the $S^2$ that appears as the first James filtered piece of $\Omega SL_2(\mathbb{C})$ is not closed under the $SL_2(\mathbb{C})$ conjugation action.  Only the collection of words of length $2$ or less is closed under the $SL_2(\mathbb{C})$ action.

In this paper, we are interested in a filtration called the \emph{Bott filtration} of $\Omega SL_n(\mathbb{C}).$  For $\Omega SL_2(\mathbb{C}),$ the Bott filtration corresponds to the James filtration on $\Omega S^3$, and in particular is \textbf{not} given by the Schubert filtration on $Gr_{SL_2}(\mathbb{C})$.  However, in Section \ref{sec:MRFil}, we will explain how to obtain the Bott filtration from the Schubert filtration, while in the remainder of this section we will explain how the Schubert filtration interacts with the $\mathbb{E}_2$-algebra structure on $\Omega SL_2(\mathbb{C})$.
\end{exm}

\subsection{$\mathbb{E}_2$-algebras via disk algebras}

Let $\C$ be a symmetric monoidal category.  An $\mathbb{E}_2$-algebra in $\C$ is an object $A\in \C$ with multiplications parametrized by embeddings of disjoint unions of disks.  Our preferred model for this will be the notion of a $N(\mathrm{Disk}(\mathbb{C}))$-algebra as in \cite[\S 5.4.5]{HA}.  There is a colored operad $\mathrm{Disk}(\mathbb{C})$ whose colors are disks (open subspaces of $\mathbb{C}$ which are homeomorphic to $\mathbb{R}^2$).  The set of operations from $(D_1, D_2, \cdots, D_n)$ to $D$ is the singleton if the $D_i$ are disjoint and contained in $D$, and empty otherwise.  From this colored operad, we may obtain an $\infty$-operad $\N(\mathrm{Disk}(\mathbb{C}))^{\otimes}$ whose algebras we will refer to as $\N(\mathrm{Disk}(\mathbb{C}))$-algebras.  A $\N(\mathrm{Disk}(\mathbb{C}))$ algebra $A$ is the data of an object $A(D)\in\C$ for each disk $D\subset \mathbb{C}$ and coherent maps $A(D_1)\otimes A(D_2)\otimes \cdots \otimes A(D_n) \to A(D)$ for inclusions $D_1\coprod \cdots \coprod D_n \to D$ of disks. 

The following theorem relates these to (non-unital) $\mathbb{E}_2$-algebras\footnote{For technical reasons, we will work with the non-unital (abbreviated `$\mathrm{nu}$') variants of $\mathbb{E}_2$ and $\mathrm{Disk}(\mathbb{C})$-algebras from now on.}:

\begin{thm}[Proposition 5.4.5.15, \cite{HA}]\label{thm:lcdisk}
There is a fully faithful functor $\Alg^{\mathrm{nu}}_{\mathbb{E}_2}(\C) \to \Alg^{\mathrm{nu}}_{\N(\mathrm{Disk}(\mathbb{C}))}(\C)$ with essential image the full subcategory of $\N(\mathrm{Disk}(\mathbb{C}))_{\mathrm{nu}}$-algebras $A$ which are \emph{locally constant} in the sense that for every embedding $D\hookrightarrow D'$ of disks, the natural map $A(D)\to A(D')$ is an equivalence.  
\end{thm}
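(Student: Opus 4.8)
The plan is to obtain the functor as restriction along a map of non-unital $\infty$-operads $\phi\colon \N(\mathrm{Disk}(\mathbb{C}))^{\otimes}\to\mathbb{E}_2^{\otimes}$ and to show that $\phi$ is an \emph{operadic localization}. Let $W$ denote the class of morphisms in $\N(\mathrm{Disk}(\mathbb{C}))^{\otimes}$ given by the single inclusions $D\hookrightarrow D'$ of one disk into another. The heart of the argument is the claim that $\phi$ exhibits $\mathbb{E}_2^{\otimes}$ as the $\infty$-operad $\N(\mathrm{Disk}(\mathbb{C}))^{\otimes}[W^{-1}]$ obtained by inverting $W$. Granting this, the theorem becomes formal: by the universal property of operadic localization, $\phi^{*}\colon\Alg^{\mathrm{nu}}_{\mathbb{E}_2}(\C)\to\Alg^{\mathrm{nu}}_{\N(\mathrm{Disk}(\mathbb{C}))}(\C)$ is fully faithful, with essential image exactly those algebras $A$ carrying every morphism of $W$ to an equivalence in $\C$. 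By inspection of $W$, an algebra has this property precisely when $A(D)\to A(D')$ is an equivalence for every inclusion of disks, i.e. precisely when $A$ is locally constant.

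To build $\phi$ and verify the localization I would pass to explicit models. First reduce to round disks: for each disk $D\subset\mathbb{C}$ the poset of round open subdisks of $D$ has contractible nerve (it is filtered, and $D$ is exhausted by such subdisks), and the analogous statement holds for tuples of pairwise disjoint subdisks, so the inclusion $\mathrm{Disk}^{\mathrm{rd}}(\mathbb{C})\hookrightarrow\mathrm{Disk}(\mathbb{C})$ is cofinal for the purpose of computing the localization — in $\N(\mathrm{Disk}(\mathbb{C}))^{\otimes}[W^{-1}]$ every disk is identified with any round subdisk. Then in the localized operad the space of $k$-ary operations between (the common image of) the generic colors is computed as the nerve of the poset of $k$-tuples of pairwise disjoint round disks inside a fixed disk $D$, ordered by coordinatewise inclusion. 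The standard straight-line shrinking argument identifies this nerve with the little $2$-disks space $\mathbb{E}_2(k)\simeq\mathrm{Conf}_k(\mathbb{R}^2)$; in particular for $k=1$ it is contractible, which is the source of the locality condition. Assembling these equivalences compatibly with operadic composition — either by arranging the shrinking homotopies to be functorial in the configuration posets, or by invoking the rigidity of $\mathbb{E}_2$ among $\infty$-operads whose $k$-ary operation spaces are correctly $\mathbb{E}_2(k)$ — produces $\phi$ and simultaneously identifies it with the localization.

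The step I expect to be genuinely hard is precisely this last one: upgrading the color-wise identifications ``nerve of disjoint-disk configurations $\simeq\mathbb{E}_2(k)$'' to an equivalence of $\infty$-operads, since operadic localization is not computed naively on operation spaces and the combinatorics of nested families of disjoint disks has to be kept coherent under composition and under the symmetric-group actions. Two bookkeeping points also need care but should not cause trouble: (i) everything must be carried out in the non-unital setting, so the nullary operations $\varnothing\to D$ are discarded throughout and one works with augmentation ideals — this is what makes the target $\mathbb{E}_2^{\otimes}$ rather than a unital variant the relevant one; and (ii) one must confirm that the cofinality of round disks is strong enough to replace $\mathrm{Disk}(\mathbb{C})$ by $\mathrm{Disk}^{\mathrm{rd}}(\mathbb{C})$ at the level of localized $\infty$-operads and not merely at the level of underlying $\infty$-categories, which again uses that disjointness is an open condition and that the relevant posets of subdisks are filtered. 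With the localization claim in hand, the formal argument of the first paragraph completes the proof.
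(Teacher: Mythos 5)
This statement is not proved in the paper at all: it is quoted verbatim from Lurie's \emph{Higher Algebra} (the bracketed citation is the proof), so the only meaningful comparison is with Lurie's argument there. Your sketch follows that same route — restriction along a map of $\infty$-operads $\N(\mathrm{Disk}(\mathbb{C}))^{\otimes}\to\mathbb{E}_2^{\otimes}$ exhibited as a localization at the single-disk inclusions, with operation spaces identified with nerves of posets of disjoint disks and hence with configuration spaces — and the coherence step you correctly flag as the hard part is precisely what Lurie's machinery of $\infty$-operad approximations and the assembly arguments of \S 5.4.5 are built to handle.
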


In this paper, we will construct $\mathbb{E}_2$-algebras by constructing $\N(\mathrm{Disk}(\mathbb{C}))_{\mathrm{nu}}$-algebras and then checking the condition of Theorem \ref{thm:lcdisk}.  We will be particularly interested in equipping the affine Grassmannian with the structure of an $\mathbb{E}_2$-algebra in filtered spaces.

Before returning to the setting of the affine Grassmannian, it will be helpful to record here one additional topological construction.

\begin{dfn}[\S 3.4, \cite{BD}]
For any topological space $X$, the collection of nonempty finite subsets of $X$ can be itself topologized as follows: consider the contravariant functor from the category $\Fin_{\text{surj}}$ of finite sets and surjections to the category of topological spaces carrying a finite set $S$ to $X^S$ and a surjection $S \twoheadrightarrow T$ to the natural diagonal map $X^T \to X^S.$
Let $\Fin^{\leq n}_{\mathrm{surj}}$ denote the category of finite sets of cardinality at most $n$.  We define the topological spaces $$\Ran^{\leq n}(X) := \colim_{S\in \Fin^{\leq n}_{\mathrm{surj}}} X^S$$ and $$\Ran(X) := \colim_{n\to \infty} \Ran^{\leq n}(X)$$ where the colimits are taken in the 1-category of topological spaces.  We will refer to $\Ran(X)$ as the \emph{Ran space} of $X$.  
\end{dfn}

\begin{rmk}
There is another topology on the Ran space, given for instance in \cite[\S 5.5]{HA}.  For our purposes, it will not matter which topology we use, so we will use the colimit topology throughout.  
\end{rmk}

\begin{rmk}\label{rmk:factcosh}
Morally, the Ran space gives yet another way to think about $\mathbb{E}_2$-algebras.  Indeed, an $\E_2$ algebra $A$ can be thought of as consisting of a copy for $A$ of each disk in the plane together with multiplications parametrized by embeddings of disjoint unions of disks.  On an infinitesimal scale, this can be thought of as giving for each point in the plane $x\in \mathbb{C}$ a copy $A_x$ of $A$.  The embeddings of disks correspond to collisions of points in the plane.  The Ran space is built exactly to track this collision data.  More precisely, given an $\mathbb{E}_2$-algebra $A$ and a finite set of points in the plane $\{x_i\}_{i\in I} \in \Ran(\mathbb{C})$, one can form the tensor product $\bigotimes_{x\in X} A_x.$  These tensor products turn out to be stalks of a cosheaf on $\Ran(\mathbb{C})$; the functoriality in the Ran space expresses the idea of points colliding determining the multiplicative structure.

The more precise connection between the Ran space and disk algebras is given by the notion of a \emph{factorizable cosheaf} on the Ran space of $\mathbb{C}$ \cite[Theorem 5.5.4.10]{HA}.  Instead of introducing the general theory, we will turn immediately to our example of interest, the Beilinson-Drinfeld Grassmannian.  
\end{rmk}

\subsection{The Beilinson-Drinfeld Grassmannian}
The structure alluded to in the previous section arises naturally in algebraic geometry in the form of the Beilinson-Drinfeld Grassmannian.  Our goal in this section is to set up this object and describe the $\mathbb{E}_2$ algebras it produces via Theorem \ref{thm:lcdisk}.

\begin{dfn}
The \textit{algebro-geometric Ran space} $\text{Ran}_{\mathbb{A}^1}$ is the presheaf that assigns to every $\mathbb{C}$-algebra $R$ the set of non-empty finite subsets of $\text{Spec}(R) \times \mathbb{A}^1$ over $\text{Spec}(R)$.   The Beilinson--Drinfeld Grassmannian is the presheaf $Gr_{G,\text{Ran}}$ that assigns to each $\mathbb{C}$-algebra $R$ the set of triplets $(x,\mathcal{E},\beta)$, where $x \in \text{Ran}_{\mathbb{A}^1}(R)$, $\mathcal{E}$ is a $G$-torsor on $\mathbb{A}^1 \times \text{Spec}(R)$, and $\beta$ is a trivialization of $\mathcal{E}$ away from the graph of $x$ in $\text{Spec}(R) \times \mathbb{A}^1$. 
\end{dfn}

The Beilinson--Drinfeld Grassmannian is naturally fibered over the algebro-geometric Ran space.  Note that the complex points of the algebro-geometric Ran space are just the space $\Ran (\mathbb{C})$ that we have previously considered.  We shall be primarily interested in the resulting map on complex points $$\Gra_{G,\Ran}(\mathbb{C}) \xrightarrow{p} \Ran(\mathbb{C}).$$ For a subset $U\subset \mathbb{C}$, let $\Gra_{G,\Ran}(U\subset \mathbb{C}):= p^{-1}(\Ran(U)).$   As explained above, a point $x$ in $\text{Ran}(\mathbb{C})$ consists of a non-empty finite subset $I \subset \mathbb{C}$ of points in $\mathbb{C}$.  The fiber of the Beilinson--Drinfeld Grassmannian $Gr_{G,\text{Ran}}(\mathbb{C})$ over $x$ is the moduli of $G$-bundles on $\mathbb{A}^1$ equipped with a trivialization away from the points in $I$.  This fiber is non-canonically isomorphic to the product of $|I|$ copies of the affine Grassmannian $Gr_G(\mathbb{C})$.  

Via the machinery in the previous section (as we will explain in more detail later), $Gr_{G,\text{Ran}}(\mathbb{C})$ describes the $\mathbb{E}_2$-algebra structure on the affine Grassmannian.  However, we are interested in the compatibility of this $\mathbb{E}_2$ structure with certain filtrations on $Gr_G(\mathbb{C})$.  We must therefore exhibit a filtration on the whole Beilinson-Drinfeld Grassmannian.  In particular, we filter $Gr_{G,\text{Ran}}(\mathbb{C})$ by topological spaces $Gr_{G,\text{Ran},\le \mu}(\mathbb{C})$, allowing us to view $Gr_{G,\text{Ran}}(\mathbb{C})$ as an $\mathbb{X}_{\bullet}^+$-filtered topological space in the sense of Definition \ref{def:coweightfiltered}.  

To do so, consider an arbitrary point $p \in \text{Gr}_{G,\text{Ran}}(\mathbb{C})$, consisting of a $G$-bundle $\gamma$ on $\mathbb{A}^1$ trivialized away from a collection of points $I=(i_1,i_2,...,i_\ell) \subset \mathbb{C}$.  For each point $i_k \in \mathbb{C}$, the restriction of $\gamma$ to a formal neighborhood of $i_k$ has well-defined monodromy $\mu_k$, in the sense of Remark \ref{rem:monodromy}.  We say that $p$ lives in $\text{Gr}_{G,\text{Ran},\le \mu}(\mathbb{C})$ if and only if
$$\sum_{i=1}^{k} \mu_k \le \mu.$$  This defines $\Gra_{G,\Ran}(\mathbb{C})$ as an $\mathbb{X}^+_{\bullet}$-filtered topological space (for a filtration at the level of algebro-geometric objects, see \cite[3.1.11]{Zhu}).

We will ultimately be concerned with certain graded and filtered spectra that one can extract from $\Gra_{G}(\mathbb{C})$ and its coweight filtration.  In particular, given a dominant coweight $\mu \in \mathbb{X}^+_{\bullet}$, one obtains a graded spectrum $ \{ \Sigma^{\infty}_+ \Gra_{G,\leq k\mu}(\mathbb{C})\}_{k\in \mathbb{Z}_{\geq 0}}$.  Let us now choose and fix a dominant coweight $\mu$. Our goal for the remainder of the section is to prove the following fact:

\begin{thm}\label{thm:schubertE2}
Let $G$ be a smooth, reductive affine algebraic group over $\mathbb{C}$ and $\mu\in \mathbb{X}^+_{\bullet}$ be a dominant coweight.  Then, the graded spectrum $\{\Sigma^{\infty}_+ \Gra_{G, \leq k \mu}(\mathbb{C}) \}_{k\in \mathbb{Z}_{\geq 0}}$ naturally admits the structure of an $\mathbb{E}_2$-algebra in graded spectra. 
\end{thm}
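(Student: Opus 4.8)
The plan is to realize $\{\Sigma^{\infty}_+ \Gra_{G,\leq k\mu}(\mathbb{C})\}_k$ as the underlying object of a \emph{locally constant} $\N(\mathrm{Disk}(\mathbb{C}))_{\mathrm{nu}}$-algebra in graded spectra and then invoke Theorem \ref{thm:lcdisk}; that we land in $\mathbb{E}_2$ rather than $\mathbb{E}_\infty$ is exactly the $2$-dimensionality of $\mathbb{C}$. All the geometry we need sits inside the Beilinson--Drinfeld Grassmannian and its monodromy filtration, and the only combinatorial input is that the Bruhat order is additive: if $a\leq b$ and $c\leq d$ then $a+c\leq b+d$, since $(b+d)-(a+c)=(b-a)+(d-c)$ is again a sum of dominant coroots.

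Concretely, to a disk $D\subset\mathbb{C}$ I would assign the graded spectrum
$$A(D):=\{\Sigma^{\infty}_+ \Gra_{G,\Ran,\leq k\mu}(D\subset\mathbb{C})\}_{k\in\mathbb{Z}_{\geq 0}},$$
whose $k$-th term is $\Sigma^{\infty}_+$ of the locus over $\Ran(D)$ with total monodromy $\leq k\mu$, where total monodromy means the sum over the marked points of the local monodromies of Remark \ref{rem:monodromy}. Given disjoint disks $D_1,\dots,D_\ell\subset D$, the factorization property of $\Gra_{G,\Ran}$ identifies its restriction over the image of $\prod_i \Ran(D_i)$ in $\Ran(D)$ with $\prod_i \Gra_{G,\Ran}(D_i\subset\mathbb{C})$; composing with the inclusion of this locus and using that total monodromy is additive under disjoint union of finite subsets together with additivity of the Bruhat order, we obtain a map
$$\prod_{i=1}^{\ell}\Gra_{G,\Ran,\leq k_i\mu}(D_i\subset\mathbb{C}) \longrightarrow \Gra_{G,\Ran,\leq(k_1+\cdots+k_\ell)\mu}(D\subset\mathbb{C})$$
for every tuple $(k_1,\dots,k_\ell)$. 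Applying $\Sigma^{\infty}_+$ and assembling over all tuples with $k_1+\cdots+k_\ell=n$ gives exactly the degree-$n$ component of a map $A(D_1)\otimes\cdots\otimes A(D_\ell)\to A(D)$ for the Day-convolution monoidal structure on $\Gr(\Sp)$. To see these structure maps are coherent and not merely defined up to homotopy, I would observe that the filtered Beilinson--Drinfeld Grassmannian, after applying $\Sigma^{\infty}_+$, assembles into a factorizable cosheaf of graded spectra on $\Ran(\mathbb{C})$ in the sense of Remark \ref{rmk:factcosh} and \cite[\S 5.5]{HA}, and then extract $A$ from this cosheaf by the dictionary of \cite[Theorem 5.5.4.10]{HA}.

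It then remains to verify local constancy: for an inclusion $D\hookrightarrow D'$ of disks the map $A(D)\to A(D')$ is an equivalence. This reduces to the standard fact that the inclusion of the fiber of $\Gra_{G,\Ran}$ over a single point of $\Ran(D)$ into $\Gra_{G,\Ran}(D\subset\mathbb{C})$ is a homotopy equivalence, together with the observation that this is compatible with the monodromy filtration since monodromy is locally constant as the marked point varies in the contractible disk $D$. Granting this, Theorem \ref{thm:lcdisk} promotes $A$ to a non-unital $\mathbb{E}_2$-algebra whose underlying graded spectrum is $\{\Sigma^{\infty}_+ \Gra_{G,\leq k\mu}(\mathbb{C})\}_k$; its degree-$0$ term is $\Sigma^{\infty}_+ \Gra_{G,\Ran,\leq 0}(D\subset\mathbb{C})\simeq\Sigma^{\infty}_+\Ran(D)\simeq\mathbb{S}$ — the $\leq 0$ locus being the basepoint in every fiber and $\Ran(D)$ weakly contractible — so the structure is moreover unital.

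The main obstacle, as I see it, is the coherence step: organizing the geometrically evident factorization maps above into an honest map of $\infty$-operads $\N(\mathrm{Disk}(\mathbb{C}))^{\otimes}_{\mathrm{nu}}\to\Gr(\Sp)$. This is precisely what the factorizable-cosheaf machinery of \cite[\S 5.5]{HA} is designed to do, but it must be deployed carefully: one has to pass to spectra before introducing any grading, since graded \emph{spaces} carry no usable monoidal structure (cf.\ Remark \ref{rmk:filtspaces}); one has to accommodate the Ind-scheme nature of $\Gra_{G,\Ran}$; and one has to check that the monodromy filtration is respected by all of the factorization and restriction maps that enter the cosheaf. By contrast, the Bruhat-order bookkeeping and the identification of the degree-$0$ and unit pieces are routine.
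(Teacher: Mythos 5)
Your strategy coincides with the paper's: build a non-unital $\N(\mathrm{Disk}(\mathbb{C}))_{\mathrm{nu}}$-algebra in graded spectra from the monodromy-filtered Beilinson--Drinfeld Grassmannian, check local constancy, apply Theorem \ref{thm:lcdisk}, and adjoin a unit; the additivity of the Bruhat order and the identification of the degree-zero piece are likewise as in the paper. One structural remark before the main criticism: the coherence issue you single out as ``the main obstacle'' is a non-issue in the paper's treatment. Since $\mathrm{Disk}(\mathbb{C})$ is a colored operad whose operation sets are singletons or empty, it suffices to write down a \emph{strict} algebra in the ordinary $1$-category of graded topological spaces --- Construction \ref{cnstr:diskalg} does exactly this by gluing bundles along their trivializations --- and then pass to graded spectra. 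No factorizable-cosheaf machinery is needed, and no higher coherences have to be checked by hand.

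The genuine gap is in the local constancy step, which is where all the non-formal content of the theorem lives. You correctly reduce to the claim that for $x\in D$ the inclusion $\Gra_{G,\Ran,\leq k\mu}(\{x\}\subset\mathbb{C})\to\Gra_{G,\Ran,\leq k\mu}(D\subset\mathbb{C})$ is an equivalence, but your justification --- that ``monodromy is locally constant as the marked point varies in the contractible disk'' --- is not an argument that works. The filtered Grassmannian over $\Ran(D)$ is not a locally constant family: $\Ran(D)$ is a colimit of configuration spaces, the fibers jump as marked points collide, and contractibility of $D$ by itself yields nothing. The paper's Proposition \ref{prop:diskequiv} proves the claim by restricting to each $\Ran^{\leq n}(D)$ and using that the filtered pieces $\Gra_{G,\Ran,\leq\mu}$ are \emph{proper} over $\mathbb{C}^I$ (Lemma \ref{lem:grproper}, due to Beilinson--Drinfeld); this is exactly where the hypothesis that $G$ is reductive enters, a hypothesis your argument never invokes. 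Properness feeds into a proper base change computation identifying the cohomology of the fiber with the stalk of the pushforward, followed by a cofinality argument with shrinking disks. Without this input the step is not ``standard,'' and your proof is incomplete at precisely its load-bearing point. (A smaller omission: knowing the degree-zero piece is $\mathbb{S}$ does not by itself make the non-unital algebra unital; one must still exhibit a quasi-unit and invoke \cite[Theorem 5.4.4.5]{HA}, as the paper does, though the geometric point you identify --- the trivial bundle with its canonical trivialization --- is the right one.)
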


\begin{rmk}\label{rmk:choosespectra}
We have chosen to state our theorem in spectra because we do not set up the monoidal structure on the category of graded spaces (cf. Remark \ref{rmk:filtspaces}).  One could make sense of $\{ \Gra_{G,\leq k \mu}(\mathbb{C}) \}_{k\in \mathbb{Z}_{\geq 0}}$ as a \emph{non-unital} $\mathbb{E}_2$-algebra in graded spectra, but this is unnecessary for our applications.  Nevertheless, the proof of Theorem \ref{thm:schubertE2} goes through in ungraded spaces to show that $\coprod_{k\in \mathbb{Z}_{\geq 0}} \Gra_{G,\leq  k\mu}(\mathbb{C})$ is an $\E_2$-algebra in pointed spaces.  
\end{rmk}

\begin{rmk}
It is not generally true that $\Gra_{G,\leq k\mu}$ is contained in $\Gra_{G, \leq j\mu}$ when $k<j$ because this does not imply that $k\mu \leq j\mu$ in the poset of coweights.  However, in the case when $\mu \geq 0$, then $k\mu < j\mu$ and the pieces of the graded spectrum $ \{ \Sigma^{\infty}_+ \Gra_{G,\leq k\mu}(\mathbb{C})\}_{k\in \mathbb{Z}_{\geq 0}}$ are naturally included in one another.  In this situation, one obtains a filtered spectrum which can similarly be shown to be an $\mathbb{E}_2$-algebra in filtered spectra.  
\end{rmk}

\begin{rmk} \label{rmk:bdgrfunct}

The proof of Theorem \ref{thm:schubertE2} will also make clear the functoriality of the construction.  In particular, suppose that $G_1 \longrightarrow G_2$ is a map of reductive groups, and further suppose that this map is compatible with choices of maximal tori and dominant coweights, so that it induces a map of dominant coweight lattices $\mathbb{X}_{1,\bullet}^+ \longrightarrow \mathbb{X}_{2,\bullet}^+$.  Then if a dominant coweight $\mu_1$ is sent to a dominant coweight $\mu_2$, there is an induced map of graded $\mathbb{E}_2$-algebras
$$\{\Sigma^{\infty}_+ \Gra_{G_1, \leq k \mu_1}(\mathbb{C}) \}_{k\in \mathbb{Z}_{\geq 0}} \longrightarrow \{\Sigma^{\infty}_+ \Gra_{G_2, \leq k \mu_2} (\mathbb{C}) \}_{k\in \mathbb{Z}_{\geq 0}}.$$
\end{rmk}

\begin{dfn}
The 1-category of graded topological spaces is the ordinary category of functors from $\mathbb{Z}_{\geq 0}^{\mathrm{ds}}$ to topological spaces.  It acquires the structure of a symmetric monoidal category by the Cartesian product $\times$ with the usual grading conventions.  
\end{dfn}

\begin{cnstr}\label{cnstr:diskalg}
We define a $\mathrm{Disk}(\mathbb{C})_{\mathrm{nu}}$ algebra $\bar{A}$ valued in graded topological spaces.  This will arise from the map $p:\Gra_{G,\Ran}(\mathbb{C}) \to \Ran(\mathbb{C})$ constructed above.  For a disk $D\subset \mathbb{C}$, we define the graded topological space $\bar{A}(D)$ by the formula $\bar{A}(D)_k := \Gra_{G,\Ran, \leq k\mu}(D\subset \mathbb{C}).$  Explicitly, the points of $\bar{A}(D)_k$ are given by the data of a pair $(I, \mathcal{F})$ where $I\subset D$ is a nonempty finite subset and $\mathcal{F}$ is a principal $G$-bundle with a trivialization away from $I$ with monodromy less than or equal to $k\mu$ in the sense defined above.  Let $D_1 \coprod D_2\coprod \cdots \coprod D_n \hookrightarrow D$ be a nonempty inclusion of disjoint disks into a larger disk $D$.  To define a $\mathrm{Disk}(\mathbb{C})_{\mathrm{nu}}$ algebra, we need to exhibit maps $\bar{A}(D_1)\times \cdots \times \bar{A}(D_n) \to \bar{A}(D)$ satisfying the obvious coherences.  
This map sends the point determined by $(I_1, \mathcal{F}_1), \cdots, (I_n, \mathcal{F}_n)$ to the point $(I, \mathcal{F})$ where $I= I_1 \coprod \cdots \coprod I_n$ and $\mathcal{F}$ is obtained by gluing the $\mathcal{F}_i$ along the trivializations away from $I$.  This defines a map respecting the grading because if the coweight corresponding to $(I_i,\mathcal{F}_i) \in \bar{A}(D)_{k_i}$ is $\alpha_i\leq k_i \mu$, then $\Sigma_i \alpha_i \leq (\Sigma_i k_i) \mu$ and so one naturally lands in grading $\Sigma_i k_i$.  
\end{cnstr}
\begin{rmk}\label{rmk:bdgrfact}
More generally, the Beilinson-Drinfeld Grassmannian yields the structure of a factorizable cosheaf on $\Ran(\mathbb{C})$ as mentioned in Remark \ref{rmk:factcosh}.  The key structure present is that of \emph{factorization}, which roughly says that the Grassmannian over the disjoint union of two subsets $U,V\subset \mathbb{C}$ is equivalent to the product of the Grassmannians over the two sets separately. 
\end{rmk}

Passing to the $\infty$-category of spaces and stabilizing, we obtain a $\N(\mathrm{Disk}(\mathbb{C}))_{\mathrm{nu}}$-algebra $A$ valued in graded spectra whose value on a disk $D$ is $A(D)_k = \Sigma^{\infty}_+ \bar{A}(D)_k = \Sigma^{\infty}_+ \Gra_{G,\Ran,\leq k\mu }(D\subset \mathbb{C})$.  The following theorem is the essential non-formal input to producing an $\mathbb{E}_2$-algebra.  

\begin{prop}\label{prop:diskequiv}
Suppose that $G$ is reductive.  Let $D\subset \mathbb{C}$ be a disk and $\{x \} \hookrightarrow D$ be a point.  Then, for each $\mu\in \mathbb{X}_{\bullet}^+$, the natural map $\Gra_{G,\Ran, \leq \mu}(\{x\}\subset \mathbb{C}) \to \Gra_{G,\Ran, \leq \mu}(D\subset \mathbb{C})$ induces a homotopy equivalence of spaces.

\end{prop}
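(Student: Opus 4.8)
The plan is to show that the inclusion $\{x\} \hookrightarrow D$ induces a homotopy equivalence $\Gra_{G,\Ran,\leq \mu}(\{x\}\subset\mathbb{C}) \to \Gra_{G,\Ran,\leq \mu}(D\subset\mathbb{C})$ by exhibiting the larger space as a deformation retract onto the fiber over $x$. The source $\Gra_{G,\Ran,\leq\mu}(\{x\}\subset\mathbb{C})$ is simply the fiber $p^{-1}(\{x\})$, which by our discussion of the Schubert filtration is $\Gra_{G,\leq\mu}(\mathbb{C})$ (the closure of the orbit indexed by $\mu$ in the ordinary affine Grassmannian, up to the identification of a formal neighborhood of $x$ with the formal disk). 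The target fibers over $\Ran(D)$, which is contractible: indeed $D\cong\mathbb{R}^2$ is convex, and $\Ran(U)$ for $U$ convex is contractible (one can contract all finite subsets to a chosen point, or use that $\Ran$ of a convex set is weakly contractible by a standard argument, e.g.\ \cite[Lemma 5.5.1.6]{HA} or by a direct straight-line homotopy on each $\Ran^{\leq n}$). So the strategy is to upgrade this contractibility of the base to a deformation retraction of the total space onto the fiber over $x$.

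First I would set up the retraction on the base: choose a straight-line contraction $h_t:\Ran(D)\to\Ran(D)$ with $h_0=\mathrm{id}$ and $h_1\equiv\{x\}$, given on a finite subset $I\subset D$ by $h_t(I) = (1-t)I + tx$ (scaling toward $x$), which is continuous for the colimit topology on each $\Ran^{\leq n}(D)$. Next I would lift this to the total space. The key point is that the Beilinson--Drinfeld Grassmannian is equivariant for the group of affine automorphisms of $\mathbb{A}^1$, or more precisely: moving the finite subset $I$ by an affine map $\varphi$ of $\mathbb{A}^1$ carries a $G$-bundle trivialized away from $I$ to one trivialized away from $\varphi(I)$, by pullback along $\varphi^{-1}$, and this is compatible with the monodromy filtration since affine maps are isomorphisms near each point and hence preserve the local monodromy $\mu_k$ at each $i_k$. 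The homotopy $h_t$ is realized by the family of affine self-maps $\varphi_t(z) = (1-t)z + tx$ of $\mathbb{A}^1$ (these are not isomorphisms at $t=1$, only at $t<1$, so some care is needed — see below). Applying $\varphi_t$ to the bundle-with-trivialization data gives a homotopy $H_t$ on $\Gra_{G,\Ran,\leq\mu}(D\subset\mathbb{C})$ covering $h_t$, with $H_0=\mathrm{id}$; one checks $H_1$ lands in the fiber over $\{x\}$ and that the composite $\Gra_{G,\Ran,\leq\mu}(\{x\}\subset\mathbb{C})\hookrightarrow\Gra_{G,\Ran,\leq\mu}(D\subset\mathbb{C})\xrightarrow{H_1}\Gra_{G,\Ran,\leq\mu}(\{x\}\subset\mathbb{C})$ is the identity (since $\varphi_1$ collapses everything to $x$ but the bundle data near $x$ is untouched), so this exhibits the fiber inclusion as a deformation retract, hence a homotopy equivalence.

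The main obstacle is making the degeneration at $t=1$ rigorous: the affine maps $\varphi_t$ are isomorphisms of $\mathbb{A}^1$ for $t\in[0,1)$ but $\varphi_1$ is the constant map to $x$, so pullback of bundles is only literally defined for $t<1$, and one must argue that the family extends continuously to $t=1$ with the expected limiting value. The cleanest way around this is to work more locally: since defining the monodromy and the gluing data only requires trivializations in a \emph{neighborhood} of each point (as emphasized in Remark~\ref{rem:monodromy}), one can shrink attention to formal or analytic neighborhoods and use that $\varphi_t$ is an isomorphism near each point of $I$ for all $t$ close enough to $1$ as long as the points remain distinct — and when points collide, the gluing/factorization structure (Remark~\ref{rmk:bdgrfact}) handles the combination. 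Alternatively, and perhaps more robustly, one invokes the algebro-geometric contraction principle: the action of $\mathbb{G}_m$ (or the additive group) scaling $\mathbb{A}^1$ has the fiber over $x$ as its attractor, and the Bia\l ynicki-Birula-type argument shows the total space over $\Ran(D)$ retracts onto this attractor; this is morally the statement that $\Gra_{G,\Ran}$ is "locally constant" along $\Ran(\mathbb{C})$, which is exactly the factorization homology perspective of \cite[\S 5.5]{HA}. Either route reduces the claim to the contractibility of $\Ran(D)$ together with the local triviality (factorization) of the Beilinson--Drinfeld Grassmannian, both of which are available to us.
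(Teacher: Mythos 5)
Your strategy---lift the straight-line contraction of $\Ran(D)$ to a deformation retraction of $\Gra_{G,\Ran,\leq\mu}(D\subset\mathbb{C})$ onto the fiber over $x$---founders exactly at the point you flag, and I don't think either of your proposed repairs closes the gap. The problem is concentrated at $t=1$, where two things happen at once: the distinct points of $I$ collide at $x$, and the scaling $\varphi_t$ degenerates to a constant map, so that in local coordinates your $H_t$ acts on the bundle data by loop rotation with parameter tending to infinity. Such limits of $\mathbb{G}_m$-actions exist pointwise on each (ind-)proper stratum by the valuative criterion, but the resulting limit map is a Bia\l ynicki-Birula-type attractor map, and these are \emph{not continuous} in general: already for the standard $\mathbb{G}_m$-action on $\mathbb{P}^1$, the map $y\mapsto\lim_{c\to 0}c\cdot y$ sends everything except $\infty$ to $0$ and is discontinuous at $\infty$. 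The same phenomenon occurs inside a single Schubert variety $\Gra_{G,\leq\mu}(\mathbb{C})$ under loop rotation (the limit map collapses the open orbit onto a flag variety in the boundary while fixing smaller strata), so $H_1$ as you define it is not a continuous map, and $H$ is not a homotopy. Your fallback arguments don't rescue this: factorization identifies \emph{what} the fiber over a collision point is, not that the total space retracts continuously onto it; a BB "contraction" is not a deformation retraction (again, $\mathbb{P}^1\not\simeq \mathrm{pt}$); and "$\Gra_{G,\Ran}$ is locally constant along $\Ran(\mathbb{C})$" is essentially the statement being proved, so invoking it is circular. (A smaller, repairable slip: even where $H_t$ is defined, $H_1$ restricted to the fiber over $x$ is not the identity, since $\varphi_t$ is a nontrivial scaling near $x$; one would instead argue $H_1|_{\mathrm{fiber}}\simeq\mathrm{id}$ through the $H_t$.)

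The paper's proof avoids constructing any retraction. It reduces to the finite strata $\Ran^{\leq n}$, invokes the ind-properness of $p\colon\Gra^{\leq n}_{G,\Ran,\leq\mu}\to\Ran^{\leq n}$ (Lemma \ref{lem:grproper} / Corollary \ref{cor:ranproper}, the one genuinely non-formal input from Beilinson--Drinfeld), and applies the proper base change theorem to the constant sheaf: this identifies the cohomology of the fiber over $x$ with the colimit of $C^*(p^{-1}(\Ran^{\leq n}(D_k));\mathbb{Z})$ over shrinking disks $D_k$, i.e.\ with $C^*(\Gra(D);\mathbb{Z})$. Together with finite-type considerations this shows the fiber inclusion is an equivalence. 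In other words, properness is used to control cohomology under specialization rather than to produce a continuous specialization map---which is precisely the object your argument would need and which does not exist. If you want to salvage a geometric argument along your lines, you would have to replace the point-set retraction with something like a nearby-cycles/specialization argument at the level of (co)sheaves, which is in effect what proper base change is doing for you.
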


The theorem will come down to the following key fact from geometric representation theory, the proof of which appears in \cite{BDQuantization} immediately following \cite[5.3.10]{BDQuantization}:

\begin{lem} \label{lem:grproper}
Suppose that $G$ is reductive.  For each finite set $I$ and each $\mu \in \mathbb{X}_{\bullet}^+$, define $P$ so that the following square is a pullback in the category of topological spaces:
$$
\begin{tikzcd}
P \arrow{r} \arrow{d}{f} & Gr_{G,\text{Ran},\le \mu}(\mathbb{C}) \arrow{d} \\
\mathbb{C}^I \arrow{r} & \text{Ran}(\mathbb{C}).
\end{tikzcd}
$$
Then the map $f$ is a proper map of topological spaces.
\end{lem}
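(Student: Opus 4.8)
The plan is to deduce the lemma from the corresponding statement in algebraic geometry together with a GAGA-type comparison of algebraic and topological properness. First I would identify $P$ with the space of complex points of a \emph{relative} Beilinson--Drinfeld Grassmannian over the configuration base $(\mathbb{A}^1)^I$. Explicitly, let $\Gra_{G,(\mathbb{A}^1)^I,\le\mu}$ denote the $\mathbb{C}$-scheme whose $R$-points are triples $\big((x_i)_{i\in I},\mathcal{E},\beta\big)$ with $x_i\in\mathbb{A}^1(R)$, $\mathcal{E}$ a $G$-torsor on $\mathbb{A}^1_R$, and $\beta$ a trivialization of $\mathcal{E}$ away from the graphs of the $x_i$ whose total monodromy is $\le\mu$; this is exactly the scheme obtained by pulling back $\Gra_{G,\Ran,\le\mu}$ along the ``union of points'' map $(\mathbb{A}^1)^I\to\Ran_{\mathbb{A}^1}$. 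On complex points it recovers the set-theoretic fiber product defining $P$, and the structure morphism $\pi\colon\Gra_{G,(\mathbb{A}^1)^I,\le\mu}\to(\mathbb{A}^1)^I$ recovers $f$. Because the map $\mathbb{C}^I\to\Ran(\mathbb{C})$ factors through the truncation $\Ran^{\le|I|}(\mathbb{C})$, over which the $\le\mu$-part of the Beilinson--Drinfeld Grassmannian is an honest scheme of finite type rather than an ind-scheme, there are no convergence issues with the colimit topology on $\Ran(\mathbb{C})$; unwinding this, the subspace topology on $P\subset\mathbb{C}^I\times\Gra_{G,\Ran,\le\mu}(\mathbb{C})$ agrees with the analytic topology of $\Gra_{G,(\mathbb{A}^1)^I,\le\mu}(\mathbb{C})$, using that analytification commutes with fiber products of finite-type $\mathbb{C}$-schemes.

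The substantive input is that $\pi$ is a proper morphism of schemes, and this is where reductivity of $G$ is used: for reductive $G$ the Schubert varieties $\Gra_{G,\le\mu}$ are projective, and the relative version over the configuration space inherits properness. Rather than reprove this, I would cite the argument of Beilinson and Drinfeld \cite{BDQuantization} immediately following \cite[5.3.10]{BDQuantization} (see also \cite[\S3.1]{Zhu}), which shows precisely that $\Gra_{G,\Ran,\le\mu}$ is ind-proper over $\Ran_{\mathbb{A}^1}$, hence that $\pi$, being a pullback of it, is proper. To finish, I would apply the standard fact that the analytification of a proper morphism of finite-type $\mathbb{C}$-schemes is a proper, in particular universally closed, map of topological spaces; thus $f=\pi^{\mathrm{an}}$ is proper.

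The step I expect to require the most care is not the properness itself, which we import wholesale from \cite{BDQuantization}, but the bookkeeping of the first paragraph: one must check that the topological fiber product $P$, formed in the $1$-category of topological spaces and using the colimit topology on $\Ran(\mathbb{C})$, genuinely agrees with the analytification of the scheme-theoretic pullback $\Gra_{G,(\mathbb{A}^1)^I,\le\mu}$, so that algebraic properness can be transported. After isolating the finite-type truncation $\Ran^{\le|I|}$ this is routine, but it is the only point at which the various ad hoc topologies on the Ran space and on the Beilinson--Drinfeld Grassmannian need to be reconciled.
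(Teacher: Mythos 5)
Your proposal is correct and follows essentially the same route as the paper: the paper also deduces the lemma by citing the argument in Beilinson--Drinfeld immediately following \cite[5.3.10]{BDQuantization} (noting in a remark that that argument yields properness of $f$ already at the level of schemes), and then passes to topological properness of the analytification. Your write-up simply spells out the GAGA step and the reconciliation of the colimit topology on $\Ran(\mathbb{C})$ with the analytic topology on the finite-type truncation, which the paper leaves implicit.
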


\begin{rmk}
The reader interested in geometric intuition for the above lemma is encouraged to look at \cite[Proposition 1.2.4]{ZhuDemazure}.  The argument in \cite{BDQuantization} demonstrates the stronger statement that the map $f$ is proper before taking complex points.
\end{rmk}

For any $X\subset \mathbb{C}$, $\mathrm{Ran}^{\leq n}(X)$ is a finite colimit of spaces of the form $X^J$ for a finite set $J$.  Thus, we obtain the following corollary:

\begin{cor}\label{cor:ranproper}
Let $X\subset \mathbb{C}$ and $\mu \in \mathbb{X}_{\bullet}^+$.  Then, the restriction $\Gra^{\leq n}_{G,\Ran,\leq \mu}(X\subset \mathbb{C}) \to \mathrm{Ran}^{\leq n}(X)$ of the Beilinson-Drinfeld Grassmannian to $\Ran^{\leq n}(X)$ is proper.
\end{cor}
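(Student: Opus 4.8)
The plan is to deduce the corollary from Lemma~\ref{lem:grproper} by splitting $\Ran^{\leq n}(X)$ into pieces of the form $X^S$. Recall that $\Ran^{\leq n}(X) = \colim_{S \in \Fin^{\leq n}_{\mathrm{surj}}} X^S$ is a \emph{finite} colimit, formed in the $1$-category of topological spaces, of the spaces $X^S$ with $|S|\le n$; for a surjection $S \twoheadrightarrow T$ the transition map $X^T \to X^S$ is the corresponding diagonal, which is a closed embedding when $X$ is Hausdorff. So there are two things to establish: (i) for each such $S$, the base change of the map $\Gra^{\leq n}_{G,\Ran,\leq \mu}(X\subset\mathbb{C}) \to \Ran^{\leq n}(X)$ along the coprojection $X^S \to \Ran^{\leq n}(X)$ is proper; and (ii) a point-set principle allowing one to detect properness on such a finite colimit decomposition of the target.

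For (i), note that the composite $X^S \to \Ran^{\leq n}(X) \hookrightarrow \Ran(\mathbb{C})$ coincides with $X^S \hookrightarrow \mathbb{C}^S \to \Ran(\mathbb{C})$, where the last map is the one appearing in the colimit presentation of $\Ran(\mathbb{C})$ (it sends a tuple to its underlying finite set). Taking $I = S$ in Lemma~\ref{lem:grproper}, the pullback of $\Gra_{G,\Ran,\le\mu}(\mathbb{C})$ along $\mathbb{C}^S \to \Ran(\mathbb{C})$ is proper over $\mathbb{C}^S$; since properness of maps of topological spaces is stable under base change, restricting further along the closed inclusion $X^S \hookrightarrow \mathbb{C}^S$ gives a proper map over $X^S$. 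One checks from the definitions that this proper map is exactly the base change in (i) --- passing from $\Ran$ to $\Ran^{\leq n}$ has no effect after pulling back to $X^S$, since $X^S$ already maps into $\Ran^{\leq n}(X)$.

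Step (ii) is the only genuinely point-set input, and I expect it to be the main obstacle. What is needed is a lemma of the following shape: if $f\colon Y \to Z$ is a continuous map, $Z = \colim_{i\in I} Z_i$ is a finite colimit in topological spaces whose coprojections and transition maps are closed embeddings, and each base change $Y\times_Z Z_i \to Z_i$ is proper, then $f$ is proper. The delicacy is entirely in the meaning of ``proper'': for the locally compact Hausdorff spaces at hand one should read it as ``preimages of compact sets are compact'' (equivalently, universally closed), and then one must check that this property descends along the finite cover of $Z$ by the closed images of the $Z_i$. After totally ordering a cofinal filtration of $\Fin^{\leq n}_{\mathrm{surj}}$ this reduces to the case of a single pushout along a closed embedding, which is the standard gluing statement for closed maps; the required compactness of fibers is inherited from the already-proper maps over the pieces. (Alternatively, one may bypass the colimit bookkeeping by observing that the tautological surjection $X^{[n]} \twoheadrightarrow \Ran^{\leq n}(X)$ is itself proper and checking that properness descends along it.) Granting such a lemma, applying it to $\Ran^{\leq n}(X) = \colim_S X^S$ together with (i) yields the corollary.
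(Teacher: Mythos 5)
Your argument is the same as the paper's: the paper deduces the corollary from Lemma \ref{lem:grproper} in one line, citing only that $\Ran^{\leq n}(X)$ is a finite colimit of spaces $X^J$, and leaves both the base-change step and the point-set gluing principle for properness implicit. Your proposal simply spells out those two steps (correctly), so it matches the intended proof.
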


We now prove Proposition \ref{prop:diskequiv} following ideas of Jacob Lurie.  

\begin{proof}[Proof of Proposition \ref{prop:diskequiv}]
It suffices to show that we have an equivalence upon restricting to $\Ran^{\leq n}$ for each $n$, because $\Gra_{G, \Ran, \leq  \mu}(\{ x\} \subset\mathbb{C}) = \colim_n \Gra^{\leq n}_{G,\Ran, \leq \mu}(\{x \} \subset \mathbb{C})$ and $\Gra_{G, \Ran, \leq  \mu}(D \subset\mathbb{C}) = \colim_n \Gra^{\leq n}_{G,\Ran, \leq \mu}(D \subset \mathbb{C})$ are filtered colimits of closed inclusions.  For ease of notation, for a subset $U\subset \mathbb{C}$, let $\Gra(U)$ stand in for $\Gra^{\leq n}_{G,\Ran, \leq  \mu}(U \subset \mathbb{C})$ for the remainder of the proof.  

We have a pullback square of topological spaces 
\begin{equation*}
\begin{tikzcd}
\Gra (\{ x\}) \arrow[r,"i"]\arrow[d,"p_x"] & \Gra (D) \arrow[d,"p"]\\
\{x \} = \Ran^{\leq n}(\{ x\} )\arrow[r,"j"] & \Ran^{\leq n}(D)
\end{tikzcd} \end{equation*}
where $p$ and $p_x$ are proper by Corollary \ref{cor:ranproper}.  Certainly $\Gra(\{ x\})$ is finite type because it is a filtered piece of the affine Grassmannian.  One consequence of properness is that $\Gra(\mathbb{C})$, and thus $\Gra(D)$, is finite type because $\Ran^{\leq n}(\mathbb{C})$ is a finite colimit of spaces which are proper over spaces of the form $\mathbb{A}^n_{\mathbb{C}}$.  It therefore suffices to show that the upper map $i$ induces an isomorphism on cohomology.  

Applying the proper base change theorem to the constant sheaf $\underline{\mathbb{Z}}$ on $\Gra(D)$, we find that there is a natural equivalence $j^*p_*(\underline{\mathbb{Z}}) \xrightarrow{\sim} (p_x)_{*}i^*(\underline{\mathbb{Z}}).$  The right-hand side is $C^*(\Gra(\{x\}) ;\mathbb{Z})$, the integral cohomology of the fiber at $x$.  On the other hand, the left-hand side is by definition $$\colim_{x\in U} C^*(p^{-1}(U);\mathbb{Z})$$ where the colimit is taken over $U\subset \Ran^{\leq n}(D)$ containing $x$.  We may compute this colimit by extracting a cofinal sequence of opens $$x \subset \cdots \subset \Ran^{\leq n}(D_k) \subset \cdots \subset \Ran^{\leq n}(D_0)$$ where $D_i$ are shrinking disks such that $D_0=D$.  The transition maps in this system induce equivalences because there is a deformation retraction of $D_k$ onto $D_{k+1}$.  Therefore, we conclude that $$\colim_{x\in U} C^*(p^{-1}(U);\mathbb{Z}) \simeq C^*(\Gra(D); \mathbb{Z})$$ and so the map $i$ induces a cohomology isomorphism as desired.  
\end{proof} 

With Proposition \ref{prop:diskequiv} in hand, we may now prove Theorem \ref{thm:schubertE2}.

\begin{proof}[Proof of Theorem \ref{thm:schubertE2}]
Let $A$ be the $\N (\mathrm{Disk}(\mathbb{C}))_{\mathrm{nu}}$-algebra we have been considering, where $A(D)_k = \Sigma^{\infty}_+ \Gra_{G,\Ran,\leq k\mu}(D\subset \mathbb{C})$.  Let $D \subset D'$ be an inclusion of disks and let $\{x \} \hookrightarrow D$ be any point of $D$.  Then, we have for all $k$ the commutative triangle
\begin{equation*}
\begin{tikzcd}
 & \Gra_{G,\Ran, \leq k\mu}(D\subset \mathbb{C})\arrow[d] \\
\Gra_{G,\Ran,\leq k\mu}(\{x\}\subset \mathbb{C}) \arrow[ru]\arrow[r] &\Gra_{G,\Ran,\leq k\mu}(D'\subset \mathbb{C}) \\
\end{tikzcd}
\end{equation*}

By Proposition \ref{prop:diskequiv}, the rightward maps are equivalences.  It follows that the vertical map is an equivalence, and so the natural map $A(D) \to A(D')$ is an equivalence of graded spectra.  Thus, by Theorem \ref{thm:lcdisk}, $A(D)$ acquires the structure of a non-unital $\mathbb{E}_2$ algebra in graded spectra.  Moreover, the underlying graded spectrum is given by $\{ \Sigma^{\infty}_+ \Gra_{G,\leq k \mu}(\mathbb{C})\}_{k \in \mathbb{Z}_{\geq 0}} $.  

The above discussion produces $\{ \Sigma^{\infty}_+ \Gra_{G,\leq k\mu}(\mathbb{C})\}_{k \in \mathbb{Z}_{\geq 0}}$ as a \textit{non-unital} graded $\mathbb{E}_2$-algebra.  To finish the proof of Theorem \ref{thm:schubertE2}, we must provide a unit.  By \cite[Theorem 5.4.4.5]{HA}, it suffices to produce a quasi-unit for $\{ \Sigma^{\infty}_+ \Gra_{G,\leq k\mu}(\mathbb{C})\}_{k \in \mathbb{Z}_{\geq 0}}$, which is a map $\mathbb{S} \longrightarrow  \Sigma^{\infty}_+ \Gra_{G,\leq 0}(\mathbb{C})$ that is both a left and a right unit up to non-canonical homotopy.  We may take, for example, the suspension of the point of the Beilinson-Drinfeld Grassmannian that is the trivial $G$-bundle over $\mathbb{A}^1$ together with its canonical trivialization away from the origin.

\end{proof}

\section{The Bott filtration on \texorpdfstring{$\Omega SU(n)$}{OmegaSU(n)}} \label{sec:MRFil}

In this section we recall and study the Bott filtration \cite{MitchellLoopGroup} on $\Omega SU(n)$.  Our main result is that the Bott filtration is at least $\mathbb{A}_\infty$, meaning in particular that its suspension is an $\mathbb{A}_\infty$-filtered spectrum in the sense of Section \ref{sec:FilGra}.

In the previous Section \ref{sec:Schubert} we learned very general techniques to construct $\mathbb{E}_2$-filtrations.  To see why these techniques do not directly produce the Bott filtration, it is helpful to recall the very instructive Example \ref{sl2example}.  Let us briefly summarize our previous discussion of that example:

\begin{rmk}
Consider $Gr_{SL_2}(\mathbb{C}) \simeq \Omega SU(2) \simeq \Omega S^3$.  This has a natural James filtration
$$* \longrightarrow J_1(S^2) \longrightarrow J_2(S^2) \longrightarrow \cdots \longrightarrow \Omega S^3,$$
which happens to be a special case of the Bott filtration we will define below.
The discussion of Section \ref{sec:Schubert} allows us to prove that the coarsened filtration
$$* \longrightarrow J_2(S^2) \longrightarrow J_4(S^2) \longrightarrow \cdots \longrightarrow \Omega S^3$$
has suspension spectrum a filtered $\mathbb{E}_2$-algebra in spectra.  However, we would like to understand the James filtration, rather than its coarsening, and prove that it is at least an $\mathbb{A}_\infty$-filtration.
\end{rmk}

We will follow Segal \cite{Segal} and access the Bott filtration on $Gr_{SL_n}(\mathbb{C})$ in a somewhat indirect manner, by considering not $Gr_{SL_n}(\mathbb{C})$ but $Gr_{GL_n}(\mathbb{C})$.

We consider $GL_n(\mathbb{C})$ with its usual maximal torus, and choose a Borel such that dominant coweights $\mu$, in bijection with integer sequences $(a_1,a_2,\cdots,a_n)$ such that $a_1 \ge a_2 \ge \cdots \ge a_n$, are given by maps
$$t \stackrel{\mu}{\mapsto} \left( \begin{array}{cccc} t^{a_1} & 0 & \cdots & 0 \\ 0 & t^{a_2} & \cdots & 0 \\ \vdots & \vdots & \ddots & \vdots \\ 0 & 0 & \cdots & t^{a_n} \end{array} \right).$$

\begin{dfn}
Consider the affine Grassmannian $Gr_{GL_n}(\mathbb{C})$.  We denote by $F_{n,k}$ the subset of $Gr_{GL_n}$ that is the closure of the $GL_n(\mathbb{C}[[t]])$ orbit containing:
$$t \mapsto \left( \begin{array}{cccc} t^k & 0 & \cdots & 0 \\ 0 & 1 & \cdots & 0 \\ \vdots & \vdots & \ddots & \vdots \\ 0 & 0 & \cdots & 1 \end{array} \right).$$
Using the language of Section \ref{sec:Schubert}, define for each $k\geq 0$
$$F_{n,k}  :=  \Gra_{GL_n, \leq (k,0,0,\cdots)}(\mathbb{C}).$$
\end{dfn}

The following lemma, affirming a conjecture of Mahowald and Richter, is then an immediate corollary of Theorem \ref{thm:schubertE2} and Remark \ref{rmk:choosespectra}:

\begin{lem} [Conjecture of Mahowald--Richter \cite{MahowaldRichter}] 
The inclusion 
$$\coprod_k F_{n,k} \subset \Omega GL_n(\mathbb{C})$$
may be made into a map of $\mathbb{E}_2$-algebras.  The suspension $$\Sigma^{\infty}_+ \coprod_k F_{n,k}$$ is a graded $\mathbb{E}_2$-algebra in the sense of Section \ref{sec:FilGra}.
\end{lem}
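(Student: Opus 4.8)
The plan is to show that this lemma is a direct specialization of Theorem~\ref{thm:schubertE2} together with Remark~\ref{rmk:choosespectra}, applied to the group $G = GL_n(\mathbb{C})$ and the particular dominant coweight $\mu = (1,0,0,\cdots,0)$. First I would observe that $\mu = (1,0,\cdots,0)$ is dominant with respect to the Borel chosen above, since $1 \ge 0 \ge 0 \ge \cdots \ge 0$, and that it is non-negative in the sense discussed in the remark following Theorem~\ref{thm:schubertE2}. Moreover $k\mu = (k,0,\cdots,0)$, so by definition $\Gra_{GL_n,\le k\mu}(\mathbb{C}) = F_{n,k}$, which identifies the underlying graded object $\{\Sigma^\infty_+ \Gra_{GL_n,\le k\mu}(\mathbb{C})\}_{k \in \mathbb{Z}_{\ge 0}}$ with $\{\Sigma^\infty_+ F_{n,k}\}_{k \in \mathbb{Z}_{\ge 0}}$, i.e.\ with $\Sigma^\infty_+ \coprod_k F_{n,k}$ as a graded spectrum.

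Next I would invoke Theorem~\ref{thm:schubertE2}: since $GL_n$ is a smooth, reductive affine algebraic group over $\mathbb{C}$ and $k\mu$ is a dominant coweight for every $k$, the graded spectrum $\{\Sigma^\infty_+ \Gra_{GL_n,\le k\mu}(\mathbb{C})\}_{k\in\mathbb{Z}_{\ge 0}}$ admits an $\mathbb{E}_2$-algebra structure in graded spectra. This gives the second sentence of the lemma immediately. For the first sentence — that the inclusion $\coprod_k F_{n,k} \subset \Omega GL_n(\mathbb{C})$ can be made $\mathbb{E}_2$ — I would appeal to Remark~\ref{rmk:choosespectra}, which records that the proof of Theorem~\ref{thm:schubertE2} already goes through in pointed spaces to exhibit $\coprod_k \Gra_{GL_n,\le k\mu}(\mathbb{C})$ as an $\mathbb{E}_2$-algebra in pointed spaces. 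Since $\mu$ is non-negative, the pieces $F_{n,k}$ are nested inside one another, and colimiting over $k$ (equivalently, collapsing the grading by summing filtration degrees, exactly as in Construction~\ref{cnstr:diskalg} where $\sum_i \alpha_i \le (\sum_i k_i)\mu$) recovers the full affine Grassmannian $Gr_{GL_n}(\mathbb{C}) \simeq \Omega GL_n(\mathbb{C})$ with its fusion $\mathbb{E}_2$-structure; the inclusion of each $F_{n,k}$ is then by construction a map of $\mathbb{E}_2$-algebras (one checks the $\N(\mathrm{Disk}(\mathbb{C}))_{\mathrm{nu}}$-algebra map of Construction~\ref{cnstr:diskalg} is compatible with the map to the algebra computing $\Omega GL_n(\mathbb{C})$, which is the $k = \infty$ case). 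Finally, the identification $Gr_{GL_n}(\mathbb{C}) \simeq \Omega GL_n(\mathbb{C}) \simeq \Omega U(n)$ is the standard one recalled in Section~\ref{sec:Schubert}.

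The only genuine point requiring a remark rather than a pure citation is the passage from the \emph{non-unital} $\mathbb{E}_2$-structure produced by the disk-algebra machinery to an honest (unital) $\mathbb{E}_2$-structure on $\coprod_k F_{n,k}$, and the compatibility of the unit with the one on $\Omega GL_n(\mathbb{C})$; but this is handled exactly as in the last paragraph of the proof of Theorem~\ref{thm:schubertE2}, taking the quasi-unit to be the point of the Beilinson--Drinfeld Grassmannian given by the trivial $GL_n$-bundle on $\mathbb{A}^1$ with its canonical trivialization away from the origin, which lives in $F_{n,0} = \Gra_{GL_n,\le 0}(\mathbb{C})$. I do not anticipate a real obstacle here: the content of this lemma is entirely contained in Section~\ref{sec:Schubert}, and the proof amounts to unwinding definitions to see that the coweight $(1,0,\cdots,0)$ produces precisely the subspaces $F_{n,k}$.
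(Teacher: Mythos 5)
Your proposal is correct and matches the paper's own (very terse) argument: the paper proves this lemma precisely by citing Theorem~\ref{thm:schubertE2} and Remark~\ref{rmk:choosespectra} with $G = GL_n$ and $\mu = (1,0,\ldots,0)$, which is exactly the specialization you carry out. Your added detail on identifying $\Gra_{GL_n,\le k\mu}(\mathbb{C})$ with $F_{n,k}$, on the inclusion being induced at the level of $\N(\mathrm{Disk}(\mathbb{C}))_{\mathrm{nu}}$-algebras, and on the quasi-unit all faithfully unwind what the paper leaves implicit.
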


As explained by Segal \cite{Segal}, the coproduct $\coprod_k F_{n,k}$ may be viewed as the subspace of loops in $U(n)$ `of positive winding number.'  The $k$th piece $F_{n,k}$ consists of loops of winding number exactly $k$, and the group completion of $\coprod_k F_{n,k}$ is $\Omega U(n)$.

\begin{exm}
For any $n$, $F_{n,1}$ is equivalent to $\mathbb{CP}^{n-1}$.  The space $F_{2,k}$ is non-canonically homeomorphic to the $k$th stage of the James filtration of $\Omega S^3$, consisting of all words of length $\le k$.
\end{exm}

It is not at all obvious from the above construction that there should exist maps $F_{n,k} \rightarrow F_{n,k+1}$.  To make such maps requires some way of identifying the various connected components of $\Omega U(n)$, each of which is individually equivalent to $\Omega SU(n)$.  Following Segal \cite[pg. 3--4]{Segal}, one makes this identification by multiplying by powers of 
$$\lambda = \left( \begin{array}{cccc} t & 0 & \cdots & 0 \\ 0 & 1 & \cdots & 0 \\ \vdots & \vdots & \ddots & \vdots \\ 0 & 0 & \cdots & 1 \end{array} \right).$$
In other words, there is a map from the space of loops of winding number $k$ to loops of winding number $0$ given by multiplication by $\lambda^{-k}$.

\begin{dfn}
The Bott filtration on $\Omega SL_n(\mathbb{C})$ is the filtration with $k$th piece given by $\lambda^{-k} F_{n,k}$.  We will refer to the associated filtered spectrum 
$$\mathbb{S} \rightarrow \Sigma^{\infty} \lambda^{-1} F_{n,1} \simeq \Sigma^{\infty} \mathbb{CP}^{n-1} \rightarrow \Sigma^{\infty} \lambda^{-2} F_{n,2} \rightarrow \cdots$$
by $\Sigma^{\infty}_+ \{F_{n,k}\}$.
\end{dfn}

The above constructions make $\Sigma^{\infty}_+ \{F_{n,k}\}$ into a filtered spectrum whose underlying graded spectrum is $\mathbb{E}_2$.  We will now discuss the problem of making the filtered spectrum itself $\mathbb{E}_2$, or at least $\mathbb{A}_\infty$.  For this, recall from Lemma \ref{lem:FilAsGrMod} that there is a graded $\mathbb{E}_\infty$ ring $A=\Sigma^{\infty}_+ \mathbb{Z}^{ds}_{\ge 0}$ so that filtered spectra may be described as $A$-modules in graded spectra.  We now discuss the following theorem, which implies Theorem \ref{thm:BottIsAoo} from the Introduction, and which again follows easily from the machinery of Beilinson--Drinfeld Grassmannians:

\begin{thm} \label{thm:AooFil}
There is a map of $\mathbb{E}_2$-algebra objects in graded spectra
$$A \simeq \Sigma^{\infty}_+ \mathbb{Z}^{ds}_{\ge 0} \longrightarrow \Sigma^{\infty}_+ \coprod_k F_{n,k}.$$
In particular, $\Sigma^{\infty}_+ \coprod_k F_{n,k}$ is an $\mathbb{A}_\infty$-algebra in $A$-modules, and so $\Sigma^{\infty}_+ \{F_{n,k}\}$ is a filtered $\mathbb{A}_\infty$-algebra.
\end{thm}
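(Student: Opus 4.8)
The plan is to construct the asserted map $A \to \Sigma^\infty_+ \coprod_k F_{n,k}$ of graded $\mathbb{E}_2$-algebras by exhibiting $A$ itself as arising from a Beilinson--Drinfeld-type construction, so that the map is induced by a morphism of $\mathrm{Disk}(\mathbb{C})_{\mathrm{nu}}$-algebras in graded spaces (and hence, after the arguments of Section~\ref{sec:Schubert}, a map of $\mathbb{E}_2$-algebras). The natural candidate is the group $G = \mathbb{G}_m$: its dominant coweight lattice is $\mathbb{Z}$ with the trivial Borel, and $\Gra_{\mathbb{G}_m,\le k}(\mathbb{C})$ is a contractible space (indeed a point, since $Gr_{\mathbb{G}_m}(\mathbb{C}) \simeq \mathbb{Z}$ is discrete and each $\le k$ piece is a single point). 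Thus the graded $\mathbb{E}_2$-algebra $\{\Sigma^\infty_+ \Gra_{\mathbb{G}_m,\le k}(\mathbb{C})\}_k$ produced by Theorem~\ref{thm:schubertE2} has underlying graded spectrum $(\mathbb{S}, \mathbb{S}, \mathbb{S}, \dots)$, which is exactly $A = \Sigma^\infty_+ \mathbb{Z}^{ds}_{\ge 0}$ as a graded $\mathbb{E}_\infty$-ring; one should check that the $\mathbb{E}_2$-structure coming from the Beilinson--Drinfeld construction agrees with the standard $\mathbb{E}_\infty$-structure on $A$, which follows because both are determined on $\pi_0$ by the monoid structure of $\mathbb{Z}_{\ge 0}$ and the spaces involved are contractible (so there are no higher obstructions).

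Next I would produce the map of groups inducing the desired morphism. There is a natural homomorphism $\mathbb{G}_m \to GL_n$ sending $t$ to the diagonal matrix $\mathrm{diag}(t,1,\dots,1)$; this is compatible with the chosen maximal tori and Borels, and on dominant coweight lattices it sends the generator $1 \in \mathbb{Z} = \mathbb{X}_{\bullet}^+(\mathbb{G}_m)$ to $(1,0,\dots,0) \in \mathbb{X}_{\bullet}^+(GL_n)$. Applying the functoriality of the Beilinson--Drinfeld construction recorded in Remark~\ref{rmk:bdgrfunct} (with $\mu_1 = 1$ and $\mu_2 = (1,0,\dots,0)$, so that $k\mu_1 \mapsto k\mu_2 = (k,0,\dots,0)$), we obtain a map of graded $\mathbb{E}_2$-algebras
$$\{\Sigma^\infty_+ \Gra_{\mathbb{G}_m,\le k}(\mathbb{C})\}_k \longrightarrow \{\Sigma^\infty_+ \Gra_{GL_n,\le (k,0,\dots,0)}(\mathbb{C})\}_k = \Sigma^\infty_+ \coprod_k F_{n,k},$$
which is the sought-after map $A \to \Sigma^\infty_+ \coprod_k F_{n,k}$ once we identify the source with $A$ as above.

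Finally, the passage from this to the filtered $\mathbb{A}_\infty$-statement is formal. Giving $\Sigma^\infty_+ \coprod_k F_{n,k}$ the structure of an $\mathbb{A}_\infty$-algebra in $A$-modules in graded spectra is precisely the data of an $\mathbb{E}_1$-map $A \to \Sigma^\infty_+\coprod_k F_{n,k}$ of $\mathbb{E}_2$-algebras together with the $\mathbb{E}_1$-algebra structure it already carries; restricting the $\mathbb{E}_2$-structure along $\mathbb{E}_1 \to \mathbb{E}_2$ and using that an $\mathbb{E}_2$-algebra under $A$ is in particular an $\mathbb{E}_1 = \mathbb{A}_\infty$-algebra in $\mathbf{Mod}_A(\Gr(\Sp))$, we get an $\mathbb{A}_\infty$-algebra object in $\mathbf{Mod}_A(\Gr(\Sp))$. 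By Lemma~\ref{lem:FilAsGrMod}, $\mathbf{Mod}_A(\Gr(\Sp)) \simeq \Fil(\Sp)$ symmetric monoidally, and under this equivalence the underlying filtered spectrum of $\Sigma^\infty_+\coprod_k F_{n,k}$ with the "shift filtration" $A$-action is exactly $\Sigma^\infty_+ \{F_{n,k}\}$ (since multiplication by $\lambda^{-1}$, i.e. the action of the degree-$1$ generator of $A$, implements the filtration maps $F_{n,k}\to F_{n,k+1}$ after the winding-number identification of Segal). This yields Theorem~\ref{thm:BottIsAoo}. I expect the main obstacle to be the careful identification in the first paragraph: verifying that the $\mathbb{E}_2$-algebra $\{\Sigma^\infty_+\Gra_{\mathbb{G}_m,\le k}(\mathbb{C})\}_k$ is genuinely equivalent, as an $\mathbb{E}_2$-algebra (not merely as a graded spectrum), to $A$ with its standard structure, and that the resulting $A$-module structure on $\Sigma^\infty_+\coprod_k F_{n,k}$ recovers Segal's $\lambda$-multiplication rather than some twisted variant; the contractibility of all the $\mathbb{G}_m$-Grassmannian pieces is what makes this tractable.
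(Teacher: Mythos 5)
Your proposal is correct and follows essentially the same route as the paper: apply Theorem \ref{thm:schubertE2} to $G=\mathbb{G}_m$ to realize $A\simeq\Sigma^\infty_+\mathbb{Z}^{ds}_{\ge 0}$ as a graded $\mathbb{E}_2$-algebra via the Beilinson--Drinfeld Grassmannian (the pieces being points), then use the cocharacter $\mathbb{G}_m\to GL_n$ given by $(1,0,\dots,0)$ together with Remark \ref{rmk:bdgrfunct}, and finish formally with Lemma \ref{lem:FilAsGrMod}. Your extra care about matching the Beilinson--Drinfeld $\mathbb{E}_2$-structure on the $\mathbb{G}_m$ side with the standard structure on $A$ is a reasonable point that the paper's proof passes over silently, and your contractibility argument handles it.
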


\begin{proof}[Proof of Theorem \ref{thm:AooFil}]
Consider $Gr_{\mathbb{G}_m}(\mathbb{C})$, the affine Grassmannian for the multiplicative group.  This is a model for $\Omega S^1$ and so has $\mathbb{Z}$ many contractible connected components.  Choosing a dominant coweight corresponding to a loop of winding number $1$ identifies a copy of $\mathbb{Z}^{ds}_{\ge 0}$ inside of $Gr_{\mathbb{G}_m}(\mathbb{C})$.  The Beilinson--Drinfeld Grassmannian for the group $G=\mathbb{G}_m$ then describes $\Sigma^{\infty}_+ \mathbb{Z}^{ds}_{\ge 0}$ as a sub-$\mathbb{E}_2$-algebra of $\Sigma^{\infty}_+ Gr_{\mathbb{G}_m}(\mathbb{C})$.

Now, the map of groups $\mathbb{G}_m \rightarrow GL_n(\mathbb{C})$ given by the dominant coweight $(1,0,\cdots,0)$ induces a map of Beilinson--Drinfeld Grassmannians.  Applying Remark \ref{rmk:bdgrfunct}, we obtain the desired map of graded $\mathbb{E}_2$-algebras.
\end{proof}

\begin{rmk}
The $\mathbb{E}_2$-algebra map $A \rightarrow \Sigma^{\infty}_+ \coprod_k F_{n,k}$ sits in a commutative diagram of $\mathbb{E}_2$-algebras
$$
\begin{tikzcd}
A \arrow{d} \arrow{r} & \Sigma^{\infty}_+ \coprod_k F_{n,k} \arrow{d} \\
\Sigma^{\infty}_+ \mathbb{Z} \arrow{r} & \Sigma^{\infty}_+ \Omega U(n).
\end{tikzcd}
$$
The map $\Sigma^{\infty}_+ \mathbb{Z} \rightarrow \Sigma^{\infty}_+ \Omega U(n)$ may be described as the suspension of the natural map
$$\Omega^2(BU(1) \rightarrow BU(n)).$$
\end{rmk}

\begin{rmk} \label{rmk:E2fil}
The fact that there is an $\mathbb{E}_2$-algebra map $A \rightarrow \Sigma^{\infty}_+ \coprod_k F_{n,k}$ is stronger than the fact that $\Sigma^{\infty}_+ \{F_{n,k}\}$ is $\mathbb{A}_\infty$ filtered, but it is weaker than the claim that $\Sigma^{\infty}_+ \{F_{n,k}\}$ is $\mathbb{E}_2$ filtered.  We do not know if the Bott filtration is $\mathbb{E}_2$ or not, but would be very interested to learn the answer.

The machinery of Beilinson--Drinfeld Grassmannians proves that the coarsened filtration consisting of every $n$th piece of the Bott filtration (i.e. $\Sigma^{\infty}_+ \{F_{n,nk}\}$) is an $\mathbb{E}_2$-filtration.  The question is equivalent to the production of an $\mathbb{E}_3$-algebra map from $A$ to the $\mathbb{E}_3$-center of the $\mathbb{E}_2$-algebra $\Sigma^{\infty}_+ \coprod \{F_{n,k}\}$. 
After group completion, this would in particular imply the existence of an $\mathbb{E}_3$-algebra map
$$\mathbb{Z} \longrightarrow (\Omega U(n))^{hU(n)}.$$
We do not know whether even this last map exists.
\end{rmk}

We end this section by sketching what is named Construction \ref{cnstr:IntroGr} in the Introduction:

\begin{cnstr} \label{cnstr:E2GrConstruction}
The graded $\mathbb{A}_\infty$-algebra $gr(\Sigma^{\infty}_+ \{F_{n,k}\})$ may be equipped with the structure of a graded $\mathbb{E}_2$-algebra.
\end{cnstr}

\begin{proof}[Proof sketch]
As explained above, the $\mathbb{E}_2$-algebra in spaces $\coprod F_{n,k}$ receives a natural $\mathbb{E}_2$-map from $\mathbb{Z}^{ds}_{\ge 0}$.  We may thus view $\coprod F_{n,k}$ as an $\mathbb{E}_2$-algebra over $\mathbb{Z}^{ds}_{\ge 0}$ (by, e.g., the straightening and unstraightening correspondence).  There is a diagram of $\mathbb{E}_2$-algebras:
$$
\begin{tikzcd}
\coprod_k F_{n,k} \arrow{d} \arrow{r} & \Omega U(n) \arrow{r} & \Omega U \simeq BU \times \mathbb{Z} \arrow{r}{J} & Pic(\mathbb{S}) \subset \Sp \\
\mathbb{Z}^{ds}_{\ge 0}.
\end{tikzcd}
$$
In \cite[1.7]{Segal}, it is proven that the colimit of the functor $\coprod F_{n,k} \longrightarrow \Sp$ is equivalent (as a spectrum) to $gr(\Sigma^{\infty}_+ \{F_{n,k}\})$.  Note that this colimit is more classically described as the Thom spectrum of the map $\coprod F_{n,k} \longrightarrow BU \times \mathbb{Z}$.

One may also compute this colimit by first making a left Kan extension along the map $$\coprod F_{n,k} \rightarrow \mathbb{Z}^{ds}_{\ge 0},$$ and then taking the coproduct of the images of the resulting map $$\mathbb{Z}^{ds}_{\ge 0} \rightarrow \Sp.$$  Taking an operadic left Kan extension as in \cite[3.1.2]{HA}, one learns that the left Kan extension $\mathbb{Z}^{ds}_{\ge 0} \longrightarrow \Sp$ is lax $\mathbb{E}_2$-monoidal.  The properties of Day convolution (explained in, e.g., Appendix \ref{app:day}) then imply that the Thom spectrum is naturally an $\mathbb{E}_2$-graded spectrum.

To see that the underlying $\mathbb{A}_\infty$-graded spectrum agrees with the associated graded of the Bott filtration, notice that the zero-section of the Thom construction is a map of graded $\mathbb{E}_2$-algebras.  This zero-section is, on the $k$th graded piece, a model for the map $$\Sigma^{\infty} F_{n,k} \longrightarrow \Sigma^{\infty} F_{n,k}/F_{n,k-1}.$$
The sequence of graded $\mathbb{E}_2$-algebra maps
$$\Sigma^{\infty}_+ \mathbb{Z}^{ds}_{\ge 0} \longrightarrow \Sigma^{\infty}_+ \coprod_k F_{n,k} \longrightarrow \text{Thom}\left( \coprod_k F_{n,k} \right)$$
then implies the result.
\end{proof} 

\section{Multiplicative Aspects of Weiss Calculus} \label{sec:MultWeiss}

In this section, we determine the multiplicative properties of the Weiss calculus polynomial approximation functors \cite{Weiss}.  More precisely, for a functor $F$, we aim to understand the Taylor tower of $F\wedge F$ in terms of the tower for $F.$  The results in this section are likely known to experts, but the authors were not able to locate it in the literature.  They thank Jacob Lurie for suggesting that Theorem \ref{thm:weissmonoidal} is true.

\subsection{Review of Weiss calculus}
We briefly review notions of Weiss calculus to set notation.  The reader is referred to \cite{Weiss} for proofs and additional details.  We note that the discussion there is in the case of real vector spaces, but the results work just the same in the complex case.  We shall also work in the language of $\infty$-categories rather than topological categories, and Remark \ref{rmk:infinityweiss} justifies this passage.  

Let $\J$ be the $\infty$-category which is the nerve of the topological category whose objects are finite dimensional complex vector spaces equipped with a Hermitian inner product and whose morphisms are spaces of linear isometries.  

Weiss calculus studies functors out of $\J$ in a way analogous to Goodwillie calculus, by understanding successive ``polynomial approximations'' to these functors.  Here, we will discuss only the stable setting where we apply the theory to the functor category $\Sp^{\J}$. The central definition is:

\begin{dfn}\label{dfn:polyfun}
A functor $F\in \Sp^{\J}$ is polynomial of degree $n$ if the natural map $$F(V) \to \lim_U F(U\oplus V)$$ is an equivalence, where the limit is indexed over the $\infty$-category of nonzero subspaces $U\subset \mathbb{C}^{n+1}.$
\end{dfn}

As in Goodwillie calculus, the inclusion of the full subcategory $\Poly^{\leq n}(\Sp^{\J}) \subset \Sp^{\J}$ of functors which are polynomial of degree $n$ admits a left adjoint $$P_n: \Sp^{\J} \xrightleftharpoons{\quad} \Poly^{\leq n}(\Sp^{\J}): j_n.$$ 
 The unit $\eta_n$ of this adjunction provides for each $F\in \Sp^{\J}$ a natural transformation $F \to P_nF$ which we will refer to as the \emph{degree $n$ polynomial approximation} of $F$. 

\begin{rmk}\label{rmk:infinityweiss}
This universal property was not explicitly stated in \cite{Weiss}, but it follows formally from Weiss's results as follows: the functor $P_n$ and the transformation $\eta_n$ can be defined explicitly as in \cite{Weiss} by iteratively applying the functor $\tau_n: \Sp^{\J} \to \Sp^{\J}$ defined by the formula $$\tau_n F(V) = \lim_U F(U\oplus V)$$ with the limit indexed as in Definition \ref{dfn:polyfun}.   The facts required of the functors $P_n$ in the proof of Theorem 6.1.1.10 in \cite{HA} are precisely the content of Theorem 6.3 of \cite{Weiss}.  
\end{rmk}

Given this universal property, Proposition 5.4 of \cite{Weiss} ensures the existence of a natural Taylor tower $$F \longrightarrow \cdots \longrightarrow P_{n} F \xrightarrow{p_{n-1}} P_{n-1} F \longrightarrow \cdots \longrightarrow P_0F$$ living under any functor $F\in \Sp^{\J}.$  The fiber $D_n F$ of $p_{n-1}$ has the special property that it is polynomial of degree $n$ and $P_{n-1} D_n F \simeq 0$.  Such a functor is called \emph{$n$-homogeneous}; such functors are completely classified by the following theorem:

\begin{thm}[{{\cite[Theorem 7.3]{Weiss}}}]
Let $F\in \Sp^{\J}$.  Then $F$ is an $n$-homogeneous functor if and only if there exists a spectrum $\Theta$ with an action of the unitary group $U(n)$ such that $$F(V) = (\Theta \wedge S^{nV})_{hU(n)}.$$
\end{thm}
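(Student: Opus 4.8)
The plan is to reformulate the theorem as the statement that one explicit functor is an equivalence of $\infty$-categories, and then to verify this in a ``soft'' stage and a ``hard'' stage; the result and this argument are due to Weiss \cite{Weiss}. Write $\mathcal{H}_n\subseteq\Sp^{\J}$ for the full subcategory of $n$-homogeneous functors. Being $n$-homogeneous means being local for $P_n$ together with $P_{n-1}(-)\simeq 0$; since $P_n$ and $P_{n-1}$ are left adjoints, hence preserve colimits, both conditions are closed under colimits, so $\mathcal{H}_n$ is a colimit-closed full subcategory of $\Sp^{\J}$. Now consider the functor $R_n\colon\Sp^{BU(n)}\to\Sp^{\J}$ carrying a spectrum $\Theta$ with $U(n)$-action to $V\mapsto(\Theta\wedge S^{nV})_{hU(n)}$, where $U(n)$ acts diagonally via its given action on $\Theta$ and its action on $S^{nV}$, the one-point compactification of $\mathbb{C}^n\otimes V$, through the standard representation on $\mathbb{C}^n$. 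As smash products and homotopy orbits are colimits and colimits in $\Sp^{\J}$ are computed pointwise, $R_n$ preserves colimits in $\Theta$. The theorem is precisely the assertion that $R_n$ factors through $\mathcal{H}_n$ and induces an equivalence $\Sp^{BU(n)}\simeq\mathcal{H}_n$.

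First, that $R_n$ lands in $\mathcal{H}_n$. Here I would use $\Sp^{BU(n)}\simeq\mathrm{Mod}_{\Sigma^{\infty}_+U(n)}(\Sp)$, which is generated under colimits by the free module $\Sigma^{\infty}_+U(n)$ (the regular representation). Since $R_n$ preserves colimits and $\mathcal{H}_n$ is colimit-closed, the preimage $R_n^{-1}(\mathcal{H}_n)$ is a colimit-closed full subcategory of $\Sp^{BU(n)}$, so it suffices to see it contains the single generator $\Sigma^{\infty}_+U(n)$. Untwisting the free homotopy orbits gives $R_n(\Sigma^{\infty}_+U(n))\simeq\Sigma^{\infty}S^{n(-)}$, so what remains is the statement that the functor $V\mapsto\Sigma^{\infty}S^{nV}$ is $n$-homogeneous --- the one genuine computation of this stage. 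I would prove it using the cofiber sequences $\Sigma^{\infty}_+S(\mathbb{C}^n\otimes U)\to\mathbb{S}\to\Sigma^{\infty}S^{\mathbb{C}^n\otimes U}$ (with $S(-)$ the unit sphere), an induction on $n$, and the effect of the operators $\tau_k$ of Remark \ref{rmk:infinityweiss} on these spheres, to conclude both that $\tau_n(\Sigma^{\infty}S^{n(-)})\simeq\Sigma^{\infty}S^{n(-)}$, so it is polynomial of degree $n$, and that $\colim_k\tau_{n-1}^{\circ k}(\Sigma^{\infty}S^{n(-)})\simeq 0$, so $P_{n-1}$ of it vanishes.

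Second, that $R_n\colon\Sp^{BU(n)}\to\mathcal{H}_n$ is an equivalence. I would factor it through the $n$-th jet category $\mathcal{J}_n$, which has the same objects as $\J$ and mapping spectra $\mathrm{Map}_{\mathcal{J}_n}(V,W)=\Sigma^{\infty}\mathrm{Th}(n\gamma_{V,W})$, where $\gamma_{V,W}$ is the bundle over the space of isometric embeddings $V\hookrightarrow W$ whose fiber over $f$ is the orthogonal complement $W\ominus f(V)$. The structure of the Weiss tower identifies $\mathcal{H}_n$ with the category of ``linear'' modules over $\mathcal{J}_n$ --- those polynomial of degree $0$ in the jet-category sense --- through a differentiation--prolongation adjunction whose unit and counit are equivalences by the convergence estimates for the $\tau_n$-tower. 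Finally, the object $\mathbb{C}^n\in\mathcal{J}_n$ has endomorphism spectrum $\Sigma^{\infty}_+\mathrm{Emb}(\mathbb{C}^n,\mathbb{C}^n)=\Sigma^{\infty}_+U(n)$ (the complement bundle there having rank $0$) and generates this module category, so one obtains $\mathcal{H}_n\simeq\mathrm{Mod}_{\Sigma^{\infty}_+U(n)}(\Sp)=\Sp^{BU(n)}$; tracing this chain of equivalences and invoking the Thom-space identification $\mathrm{Th}(n\gamma_{\mathbb{C}^n,W})\wedge_{\Sigma^{\infty}_+U(n)}(-)\simeq(S^{\mathbb{C}^n\otimes W}\wedge-)_{hU(n)}$ shows the resulting functor $\Sp^{BU(n)}\to\mathcal{H}_n$ agrees with $R_n$, which completes the argument.

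The main obstacle is this second stage, and within it the identification of $n$-homogeneous functors with $U(n)$-spectra. The differentiation--prolongation adjunction requires genuine control of the convergence of the $\tau_n$-tower (the Weiss-calculus analogue of Goodwillie's convergence theorems), and the ``Morita'' part --- that $\mathbb{C}^n$ generates the module category over $\mathcal{J}_n$, together with the Thom-space computation matching the induced functor to the stated formula --- carries essentially all of the homotopy-theoretic content. By contrast, once one grants that $V\mapsto\Sigma^{\infty}S^{nV}$ is $n$-homogeneous, the first stage is formal.
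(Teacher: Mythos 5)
This theorem is not proved in the paper at all: it is quoted verbatim from Weiss's work (\cite[Theorem 7.3]{Weiss}, translated from the orthogonal to the unitary setting), so there is no in-paper argument to compare against. Your proposal is, in substance, a faithful modern reconstruction of Weiss's own proof: the reduction to showing that $\Theta\mapsto (V\mapsto(\Theta\wedge S^{nV})_{hU(n)})$ is an equivalence onto the homogeneous functors, the direct verification that $V\mapsto\Sigma^{\infty}S^{nV}$ is $n$-homogeneous, and the identification of $\mathcal{H}_n$ with modules over the endomorphisms of $\mathbb{C}^n$ in the jet category $\mathcal{J}_n$ all correspond to Weiss's Sections 5--7, with the last step rephrased in Morita-theoretic language in the style of later treatments (e.g.\ Barnes--Oman). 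Two small points. First, your colimit-closure argument for $\mathcal{H}_n$ needs slightly more than ``$P_n$ is a left adjoint'': one must know that the $n$-polynomial functors are closed under colimits \emph{in} $\Sp^{\J}$, which holds because in the spectrum-valued setting $\tau_nF(V)$ is a section spectrum over a compact base, hence a finite limit, hence (by stability) commutes with all colimits; this is where the restriction to $\Sp^{\J}$ rather than space-valued functors is used. Second, the vanishing of $P_{n-1}$ on $V\mapsto\Sigma^{\infty}S^{nV}$ can be obtained more cheaply than by your $\tau_{n-1}$-tower computation: since $\Sigma^{\infty}S^{nW}$ is $(2n\dim W-1)$-connected, Lemma \ref{lem:ordernagree} applied to the map to the zero functor gives $P_{n-1}(\Sigma^{\infty}S^{n(-)})\simeq 0$ directly. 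The genuinely hard content, as you say, sits in the differentiation--prolongation equivalence, which your sketch defers to Weiss's convergence estimates rather than reproving; that is an accurate assessment of where the weight of the theorem lies.
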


\subsection{The Taylor tower}

It is helpful, for our study of multiplicative properties, to package all polynomial approximations into a single object--the following construction makes this precise:



\begin{cnstr}\label{cnstr:tower}
We now construct a functor $$\mathrm{Tow}: \Sp^{\J} \to \Cofil(\Sp^{\J})$$ with the property that it sends a functor $F\in \Sp^{\J}$ to its Taylor tower $$\mathrm{Tow}(F) = P_0F \longleftarrow P_1F \longleftarrow P_2F \longleftarrow \cdots.$$

Recall that the $P_n$ functors are given as left adjoints of the fully faithful inclusions $$\Poly^{\leq n}(\Sp^{\J}) \subset \Sp^{\J}.$$  We proceed by telling a parametrized version of this story that includes all $n$ simultaneously.  The proper framework for such a story is the formalism of \emph{relative adjunctions}; these are developed in the $\infty$-categorical context in \cite[Section 7.3.2]{HA}.  

Consider the category $\Sp^{\J}\times \Z^{op}_{\geq 0}$ together with the full subcategory $$(\Sp^{\J}\times \Z^{op}_{\geq 0})_{\text{poly}} \subset \Sp^{\J}\times \Z^{op}_{\geq 0}$$ on the pairs $(F, [n])$ such that $F\in \Poly^{\leq n}(\Sp^{\J}).$  Via projection, these fit into a diagram
$$
\begin{tikzcd}\label{dia:reladj}
\Sp^{\J}\times \Z^{op}_{\geq 0} \arrow[rd,"q"]& &(\Sp^{\J}\times \Z^{op}_{\geq 0})_{\text{poly}} \arrow[ld,"p"]\arrow[ll,"i"]  \\
& \Z^{op}_{\geq 0}
\end{tikzcd}
$$
 This will be relevant to us because the category of sections of $q$ are precisely $\Cofil(\Sp^{\J}).$  The sections of $p$ can be thought of as those cofiltered functors such that the $n$th piece is polynomial of degree $n$.  We will denote this category of sections of $p$ by $\Cofil(\Sp^{\J})_{\text{poly}}.$ 

On the fibers over an integer $[n] \in \Z^{op}_{\geq 0}$, we see the inclusion $\Sp^{\J} \leftarrow \Poly^{\leq n}(\Sp^{\J}).$  It is in this sense that the current picture is a parametrized version of the ordinary polynomial approximations.  We now claim that $i$ admits a left adjoint $P^{\text{total}}: \Sp^{\J}\times \Z^{op}_{\geq 0} \to (\Sp^{\J}\times \Z^{op}_{\geq 0})_{\text{poly}}$ \emph{relative} to $\Z^{op}_{\geq 0}.$    The strategy is to use Proposition 7.3.2.6 of \cite{HA}, which tells us that we need to check the following three statements:
\begin{enumerate}
\item The functors $p$ and $q$ are locally Cartesian categorical fibrations.
\item For each $[n]\in \Z^{op}_{\geq 0}$, the functor on fibers $i|_{p^{-1}[n]}:p^{-1}[n] \to q^{-1}[n]$ admits a right adjoint.  
\item The functor $i$ carries locally $p$-Cartesian morphisms of $(\Sp^{\J}\times \Z^{op}_{\geq 0})_{\text{poly}}$ to locally $q$-Cartesian morphisms of $\Sp^{\J}\times \Z^{op}_{\geq 0}$.
\end{enumerate}

Condition (2) is clear from the existence of polynomial approximations in Weiss calculus.  To see conditions (1) and (3), we first note that $q$ is in fact a Cartesian fibration because it is a projection from a product.  Moreover, the $q$-Cartesian morphisms are precisely those morphisms which are equivalences on the $\Sp^{\J}$ coordinate.  
Now suppose we are given a pair $(F, [m]) \in \Sp^{\J}\times \Z^{op}_{\geq 0}$ such that $F\in \Poly^{\leq m}(\Sp^{\J})$ and morphism $\sigma :[n]\to [m]$.  Any $q$-Cartesian edge lying over $\sigma$ with target $(F, [m])$ has source equivalent to $(F, [n])$ and thus is also in the full subcategory $(\Sp^{\J}\times \Z^{op}_{\geq 0})_{\text{poly}}$ because $m\leq n$.   
Since $p$ is certainly an inner fibration (by construction as a full subcategory), this implies that $p$ is also a Cartesian fibration and that the inclusion $i$ carries $p$-Cartesian edges to $q$-Cartesian edges.  Since any Cartesian fibration is a categorical fibration (\cite[Proposition 3.3.1.7]{HTT}), conditions (1) and (3) are verified.  

We now wish to look at the adjunction at the level of sections of $q$ and $p$.  Considering functors from $\Z_{\geq 0}^{op}$ into Diagram \ref{dia:reladj}, we obtain a new diagram 
$$
\begin{tikzcd}
\Fun(\Z^{op}_{\geq 0},\Sp^{\J}\times \Z^{op}_{\geq 0}) \arrow[rr, bend left=10,"P^{\text{total}}_*"] \arrow[rd,"q_*"]& &\Fun(\Z^{op}_{\geq 0},(\Sp^{\J}\times \Z^{op}_{\geq 0})_{\text{poly}}) \arrow[ld,"p_*"]\arrow[ll,"i_*"]  \\
& \Fun(\Z^{op}_{\geq 0}, \Z^{op}_{\geq 0})
\end{tikzcd}
$$
which exhibits $P^{\text{total}}_*$ as a left adjoint of $i_*$ relative to $\Fun(\Z_{\geq 0}^{op},\Z_{\geq 0}^{op}).$  Proposition 7.3.2.5 of \cite{HA} ensures that there is an adjunction at the level of fibers above $\text{id}\in \Fun(\Z_{\geq 0}^{op},\Z_{\geq 0}^{op})$: $$\mathcal{P}: \Cofil(\Sp^{\J}) \xrightleftharpoons{\quad} \Cofil(\Sp^{\J})_{\text{poly}}:j .$$  
Finally, observe that the unique functor $r: \Z^{op}_{\geq 0} \to *$ induces an adjunction $$r^*: \Sp^{\J} \xrightleftharpoons{\quad} \Cofil(\Sp^{\J}): \lim$$ where $r^*$ is the constant functor and $\lim$ is the same as right Kan extension along $r$.  We now compose these adjunctions, denoting $\mathrm{Tow} = \mathcal{P}\circ r^*$ to obtain: $$\mathrm{Tow}: \Sp^{\J} \xrightleftharpoons{\quad}  \Cofil(\Sp^{\J})_{\text{poly}}: \lim.$$

By construction, $\mathrm{Tow}(F)$ is the cofiltered spectrum $$P_0 F\longleftarrow P_1 F \longleftarrow P_2 F \longleftarrow \cdots .$$  This concludes the construction of $\mathrm{Tow}.$
\end{cnstr}

\subsection{Multiplicativity of Tow}
The next task is to understand the multiplicative structure of $\mathrm{Tow}$.  The idea is that we would like to express $\mathrm{Tow}(F\wedge F)$ in terms of $\mathrm{Tow}(F)$ and the Day convolution monoidal structure on $\Cofil(\Sp^{\J}).$  We start with the following lemma:  

\begin{lem}
The Weiss tower functor $\mathrm{Tow}$ defines an oplax symmetric monoidal functor $$\mathrm{Tow}: \Sp^{\J} \to \Cofil(\Sp^{\J}).$$
\end{lem}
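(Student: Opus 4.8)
The plan is to deduce the oplax symmetric monoidal structure on $\mathrm{Tow}$ from the relative-adjunction setup of Construction \ref{cnstr:tower}, by exhibiting each of the functors whose composite is $\mathrm{Tow}$ as oplax symmetric monoidal. Recall $\mathrm{Tow} = \mathcal{P} \circ r^*$, where $r^*: \Sp^{\J} \to \Cofil(\Sp^{\J})$ is the constant-diagram functor and $\mathcal{P}: \Cofil(\Sp^{\J}) \to \Cofil(\Sp^{\J})_{\text{poly}}$ is the left adjoint of the fully faithful inclusion $j$. The key general principle is that the left adjoint of a \emph{lax} symmetric monoidal functor is canonically \emph{oplax} symmetric monoidal (this is \cite[Corollary 7.3.2.7]{HA}, or can be extracted from the formalism of \cite[\S 7.3.2]{HA}). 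So the strategy breaks into three steps: (i) equip $\Cofil(\Sp^{\J})$ and $\Cofil(\Sp^{\J})_{\text{poly}}$ with symmetric monoidal structures via Day convolution (the former is already done in Appendix \ref{app:day}; the latter needs a short argument); (ii) show the inclusion $j: \Cofil(\Sp^{\J})_{\text{poly}} \hookrightarrow \Cofil(\Sp^{\J})$ is lax symmetric monoidal, so that its left adjoint $\mathcal{P}$ is oplax symmetric monoidal; (iii) observe $r^*$ is symmetric monoidal (it is the unit-insertion / constant functor for a Day convolution along the monoidal functor $* \to \Z_{\geq 0}^{op}$ picking out $0$, hence strong monoidal), and compose.

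The first substantive point is Step (i)--(ii): why is the full subcategory $\Cofil(\Sp^{\J})_{\text{poly}}$ closed under the Day convolution tensor product, equivalently, why is $j$ lax monoidal in a way compatible with the localization? Here one uses that the Day convolution on $\Cofil(\Sp^{\J}) = \Fun(\Z_{\geq 0}^{op}, \Sp^{\J})$ is computed by a \emph{left} Kan extension along $+: \Z_{\geq 0}^{op} \times \Z_{\geq 0}^{op} \to \Z_{\geq 0}^{op}$, so that $(Y \otimes Z)_n = \colim_{i+j \geq n} Y_i \wedge Z_j$ (with the smash product computed pointwise on $\J$ and thus preserving polynomial degree additively). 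The essential input from Weiss calculus is that the smash product of a degree-$i$ polynomial functor with a degree-$j$ polynomial functor is polynomial of degree $i+j$ — this is the Weiss-calculus analogue of the standard fact in Goodwillie calculus, and should be citable or provable via the classification of homogeneous functors together with the fact that $P_{i+j}$ is smashing-compatible in the appropriate sense. Actually, a cleaner route avoids checking closure directly: one instead shows that the total relative left adjoint $P^{\text{total}}_*$ of Construction \ref{cnstr:tower} is compatible with the (pointwise-Day) monoidal structures on the section categories, i.e. that $i_*$ is lax monoidal as a map over $\Fun(\Z_{\geq 0}^{op}, \Z_{\geq 0}^{op})$, and then invoke the relative version of ``left adjoint of lax is oplax.'' Either way, the heart of the matter is the additivity of polynomial degree under $\wedge$, which is where I expect the real work to be.

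The main obstacle, then, is establishing that $-\wedge -$ on $\Sp^{\J}$ restricts to a functor $\Poly^{\leq i} \times \Poly^{\leq j} \to \Poly^{\leq i+j}$ with enough coherence to see it as a map of $\infty$-operads. One clean way: note $P_n$ is a localization and the class of $P_n$-equivalences is a strongly saturated class; using the formula $\tau_n F(V) = \lim_{0 \neq U \subseteq \mathbb{C}^{n+1}} F(U \oplus V)$ and the cofinality/Fubini manipulations in \cite{Weiss}, one checks $\tau_{i+j}(F \wedge G) \simeq \tau_i F \wedge \tau_j G$ when $F, G$ are already appropriately polynomial, or more robustly that $P_{i+j}(F \wedge G)$ depends only on $P_i F$ and $P_j G$. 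This is the Weiss-calculus transcription of \cite[Example 1.20]{GoodwillieIII} / the multiplicativity results for Goodwillie towers, and since the excerpt already leans on that circle of ideas (Arone's argument), citing or adapting it should suffice. Once additivity is in hand, Steps (i)--(iii) are formal: assemble the symmetric monoidal structure on $\Cofil(\Sp^{\J})_{\text{poly}}$, conclude $j$ is lax symmetric monoidal, pass to the oplax structure on $\mathcal{P}$ by \cite[\S 7.3.2]{HA}, precompose with the strong monoidal $r^*$, and read off that $\mathrm{Tow}$ is oplax symmetric monoidal with the expected oplax structure maps $\mathrm{Tow}(F \wedge G) \to \mathrm{Tow}(F) \otimes \mathrm{Tow}(G)$ induced levelwise by $P_{i+j}(F \wedge G) \to P_i F \wedge P_j G$.
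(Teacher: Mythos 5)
Your proposal follows essentially the same route as the paper's proof: factor $\mathrm{Tow}=\mathcal{P}\circ r^{*}$, note that the constant functor $r^{*}$ is (strong) symmetric monoidal, show that $\Cofil(\Sp^{\J})_{\text{poly}}$ is closed under the Day convolution so that the inclusion $j$ is symmetric monoidal, and conclude that its left adjoint $\mathcal{P}$ is oplax. One concrete correction is needed, though. Your formula for the convolution on $\Cofil(\Sp^{\J})$ is wrong: the monoidal structure is the Day convolution on the \emph{opposite} category $\Fun(\Z_{\geq 0},(\Sp^{\J})^{op})$ (Variant \ref{var:day}), so a colimit there is a limit in $\Sp^{\J}$ and one gets $(Y\otimes Z)_{n}\simeq \lim_{i+j\leq n}Y_{i}\smsh Z_{j}$ (cf.\ Lemma \ref{lem:cubes}), not $\colim_{i+j\geq n}Y_{i}\smsh Z_{j}$. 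This matters: with your formula the closure argument breaks, since a term $Y_{i}\smsh Z_{j}$ with $i+j>n$ has no reason to be polynomial of degree $n$; with the correct formula it goes through exactly as you intend, using that $\Poly^{\leq n}(\Sp^{\J})$ is closed under finite limits together with the fact that $Y_{i}\smsh Z_{j}$ lies in $\Poly^{\leq i+j}\subseteq\Poly^{\leq n}$ when $i+j\leq n$. On that last point you are right that additivity of polynomial degree under $\smsh$ is the one non-formal input; the paper's own proof compresses this into the phrase ``closed under finite limits'' and leaves the additivity implicit (compare Remark \ref{rmk:goodwilliecase}, which concedes that the analogous statement about reduced functors is true but not in the literature), so your explicit attention to it, and your sketch of how to prove it via $\tau_{n}$, is if anything more careful than the published argument.
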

\begin{proof}
Recall that the Weiss tower functor was defined as a composite $\mathrm{Tow} = \mathcal{P} \circ r^*.$  The functor $r^*$ is just the constant functor, so it is symmetric monoidal.  On the other hand, $\mathcal{P}$ is adjoint to the inclusion $j: \Cofil(\Sp^{\J})_{\text{poly}} \to \Cofil(\Sp^{\J}).$  Since the class of functors which are polynomial of degree $n$ is closed under finite limits, the subcategory $(\Cofil(\Sp^{\J}))_{\text{poly}}$ is closed under the convolution tensor product.  We may therefore give it the structure of a symmetric monoidal $\infty$-category such that $j$ is symmetric monoidal.  This makes the left adjoint $\mathcal{P}$ an oplax symmetric monoidal functor, which induces an oplax symmetric monoidal structure on $\mathrm{Tow}.$  \end{proof}

Concretely, this oplax structure can be described on the $n$th filtered piece as follows: suppose $F,G \in \Sp^{\J}$; since $\mathrm{Tow}(F) \otimes \mathrm{Tow}(G) \in  \Cofil(\Sp^{\J})_{\text{poly}}$, the filtered piece $(\mathrm{Tow}(F) \otimes \mathrm{Tow}(G))_n$ is polynomial of degree $n$.  It follows that the natural map from $F\wedge G$ factors through a map $\varphi_n: P_n(F\wedge G)\to (\mathrm{Tow}(F) \otimes \mathrm{Tow}(G))_n$.  

In order to show that $\mathrm{Tow}$ is a symmetric monoidal functor, one would need to show that each $\varphi_n$ is an equivalence for all $n$.  We will do this after restricting to a smaller subcategory of functors with nice convergence properties:

\begin{dfn}
Let $F\in \Sp^{\J}$ be a functor.  Call $F$ \emph{rapidly convergent} if $F$ takes values in connective spectra and there exist real numbers $c,\alpha>0$ such that the natural map $F(W) \to P_nF(W)$ is $(\alpha n)\text{dim}(W) - c$ connected.  We denote by $\Sp^{\J}_{\text{conv}}$ the category of rapidly convergent functors.  
\end{dfn}

\begin{exm} \label{ex:aronefunctor}
Let $V\in \J$ be a complex vector space.  
The functor $F_V\in \Sp^{\J}$ defined by $$F_V(W) = \Sigma^{\infty}_+ \Omega \J(V, V\oplus W)$$ is rapidly convergent.  Indeed, \cite{Arone} shows that its homogeneous layers are given by $$D_nF_V(W) = \Omega^{\infty}(s_n(V) \smsh S^{nW})_{hU(n)}$$ where $s_n(V)$ is the suspension spectrum of a space.  Since colimits do not lower connectivity, this implies $D_nF_V(W)$ is at least $(2n)\text{dim}(W)-1$-connected.  It follows from the Milnor sequence that $F_V$ is rapidly convergent, where $\alpha$ can be taken to be $2$.  
\end{exm}

Observe that rapidly convergent functors in particular have convergent Weiss towers.  However, the following lemma of Weiss about functors ``agreeing up to order $n$'' allows us to say more:

\begin{lem}[\cite{WeissErratum}]\label{lem:ordernagree}
Let $F,G\in \Sp^{\J}$ be functors, $\eta: F\to G$ a natural transformation, and $n\geq 0$ an integer.  Suppose that there exists $c>0$ such that for all $W\in \J$, the map of spectra $\eta_W: F(W) \to G(W)$ is $(n+1)\text{dim}(W) -c$ connected.  Then the natural transformation $P_n\eta: P_n F\to P_n G$ is an equivalence.  
\end{lem}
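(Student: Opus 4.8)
The plan is to reduce to a single connectivity estimate for $\tau_n$ and then iterate. Recall from Remark \ref{rmk:infinityweiss} that, following \cite{Weiss}, the approximation $P_n F$ is the sequential colimit $\colim_k \tau_n^k F$, where $\tau_n F(V) = \lim_U F(U \oplus V)$ with the limit over the poset $\mathcal{P}$ of nonzero subspaces $U \subseteq \mathbb{C}^{n+1}$, and the natural map $F \to P_n F$ is the colimit of the iterated units $F \to \tau_n F \to \tau_n^2 F \to \cdots$. Applying this functorially to $\eta$, and using that colimits in $\Sp^{\J}$ are computed pointwise, we get $P_n \eta \simeq \colim_k \tau_n^k \eta$. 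First I would observe that, since sequential colimits of spectra are exact, it suffices to prove that for every fixed $W \in \J$ the connectivity of $(\tau_n^k \eta)_W$ tends to $+\infty$ as $k \to \infty$: the fiber of $(P_n\eta)_W$ is then the sequential colimit of the fibers of the $(\tau_n^k\eta)_W$, a colimit of spectra of unbounded connectivity, hence zero.

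The heart of the argument is the following estimate, which I would prove next: if $\eta_W \colon F(W) \to G(W)$ is $((n+1)\dim W + m)$-connected for all $W$ and some $m \in \mathbb{Z}$, then $(\tau_n\eta)_W$ is $((n+1)\dim W + m + 1)$-connected for all $W$. Fixing $W$, the map $(\tau_n\eta)_W$ is the map of homotopy limits over $\mathcal{P}$ induced by the natural transformation $U \mapsto \eta_{U \oplus W}$. Each term $\eta_{U\oplus W}$ is, by hypothesis, $((n+1)(\dim U + \dim W) + m)$-connected, hence at least $((n+1)\dim W + m + (n+1))$-connected because $\dim U \geq 1$. On the other hand, the nerve of $\mathcal{P}$ has dimension at most $n$, since a strictly increasing chain of nonzero subspaces of $\mathbb{C}^{n+1}$ has at most $n+1$ terms; consequently $\lim^s_{\mathcal{P}}$ vanishes for $s > n$. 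Feeding the objectwise bound into the Bousfield--Kan spectral sequence $E_2^{s,t} = \lim^s_{\mathcal{P}} \pi_t(\fib) \Rightarrow \pi_{t-s}\,\fib((\tau_n\eta)_W)$ shows that the homotopy limit loses at most $n$ from the fiberwise connectivity, yielding connectivity at least $(n+1)\dim W + m + (n+1) - n = (n+1)\dim W + m + 1$, as claimed.

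Granting the estimate, the hypothesis of the lemma is precisely the case $m = -c$, so an induction on $k$ shows that $(\tau_n^k\eta)_W$ is $((n+1)\dim W - c + k)$-connected for all $W$; for fixed $W$ this grows linearly in $k$ and in particular is unbounded. By the first step, $(P_n\eta)_W$ is then an equivalence for every $W$, i.e.\ $P_n\eta$ is an equivalence in $\Sp^{\J}$, completing the proof.

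I expect the main obstacle to be getting the $\tau_n$-estimate exactly right -- that a single application of $\tau_n$ improves the connectivity \emph{excess} by at least $1$. This rests on the numerical coincidence that the slope $n+1$ in the hypothesis strictly exceeds the dimension $n$ of the indexing poset $\mathcal{P}$, so that the gain of $n+1$ from the shift $\dim U \geq 1$ beats the loss of at most $n$ incurred by the homotopy limit; with slope only $n$ the iteration would stall. This is exactly the delicate bookkeeping whose original treatment in \cite{Weiss} required the correction \cite{WeissErratum}. Working with spectra rather than spaces, as we do here, makes the exactness arguments in the reduction and the conclusion routine, but does not remove this numerical subtlety.
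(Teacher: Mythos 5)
The paper does not prove this lemma; it is cited from \cite{WeissErratum}, so the relevant comparison is with Weiss's proof there rather than with anything in the present text.

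Your reduction --- iterate $\tau_n$ and show that the connectivity of $(\tau_n^k\eta)_W$ is unbounded, then conclude via exactness of sequential colimits --- is sound, and the intuition that the slope $n+1$ must beat the chain length $n$ is exactly the right one. The gap is in the connectivity estimate for a single application of $\tau_n$. You treat $\mathcal{P}$, the indexing category of nonzero subspaces $U \subseteq \mathbb{C}^{n+1}$, as an ordinary (discrete) poset whose nerve has dimension at most $n$, and conclude that the Bousfield--Kan spectral sequence loses at most $n$ from the objectwise connectivity. But $\mathcal{P}$ is a \emph{topological} category (compare the definition of $\J$ earlier in the section, and Weiss's own setup): its space of objects is $\coprod_{k=1}^{n+1}\mathrm{Gr}_k(\mathbb{C}^{n+1})$. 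While it is true that the normalized cosimplicial object computing the homotopy limit is supported in degrees $\leq n$, the term in cosimplicial degree $p$ is a \emph{section spectrum} over a disjoint union of flag manifolds of nondegenerate $p$-chains, not a product indexed by a set; forming those sections costs connectivity on the order of the dimensions of those manifolds. Already at cosimplicial degree $0$, sectioning over $\mathrm{Gr}_1(\mathbb{C}^{n+1}) \cong \mathbb{CP}^{n}$ costs $2n$, which for $n \geq 2$ exceeds the gain of $n+1$ you extract from $\dim U \geq 1$; so your objectwise bound of $(n+1)\dim W + m + (n+1)$, combined with a loss of ``at most $n$'', does not yield the claimed improvement of the excess by $1$, and the naive version of the estimate in fact appears to go the wrong way.

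This is exactly the bookkeeping that broke in Weiss's original proof and required the erratum, and you correctly flag that sensitivity at the end --- but the argument you give doesn't address it, it reproduces it. Weiss's corrected proof is not a bare nerve-dimension $\lim^s$-count: it involves a more careful geometric analysis of the topological poset of subspaces and the Grassmannians over which one is sectioning. So while the top-level strategy (iterate $\tau_n$) and the intended numerical coincidence are both right, the proof of the single-step $\tau_n$-estimate needs to be replaced by the actual argument of \cite{WeissErratum}.
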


The following corollary is immediate:

\begin{cor} \label{cor:rapidconv}
Let $F,G\in \Sp^{\J}_{\text{conv}}$ be rapidly convergent and $n>0$ an integer.  Then there exists an integer $M$ such that for $m>M$, the natural transformation $F\smsh G \to P_m F\smsh P_m G$ is an equivalence after applying $P_k$ for all integers $0\leq k \leq n.$  
\end{cor}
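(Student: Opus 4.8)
\textbf{Proof proposal for Corollary \ref{cor:rapidconv}.}

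The plan is to reduce the claim to Lemma \ref{lem:ordernagree} applied to each of finitely many intermediate natural transformations. First I would factor the transformation $F \smsh G \to P_m F \smsh P_m G$ as the composite $F \smsh G \to P_m F \smsh G \to P_m F \smsh P_m G$, and observe that it suffices to prove that each factor becomes an equivalence after applying $P_k$ for $0 \le k \le n$, once $m$ is large enough. Because these two factors are symmetric in nature, it is enough to handle the first one, $F \smsh G \to P_m F \smsh G$, induced by the unit transformation $\eta_m: F \to P_m F$ smashed with the identity of $G$; the second is treated the same way with the roles of $F$ and $G$ reversed (using in addition that $P_m F$ is again connective, hence smashing with it does not lower connectivity below the relevant range, which is all that is needed).

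Next I would produce the connectivity estimate. Since $F$ is rapidly convergent, there are constants $c_F, \alpha_F > 0$ so that $F(W) \to P_m F(W)$ is $(\alpha_F m)\dim(W) - c_F$ connected; equivalently, the fiber of $\eta_{m,W}$ is $(\alpha_F m)\dim(W) - c_F - 1$ connected. Smashing with the connective spectrum $G(W)$, whose bottom cell we may take to be in some dimension bounded below independently of $W$ (in fact $G(W)$ is connective so this only helps), the fiber of $F(W)\smsh G(W) \to P_m F(W) \smsh G(W)$ is still at least $(\alpha_F m)\dim(W) - c_F - 1$ connected, hence this map is $(\alpha_F m)\dim(W) - c_F$ connected. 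Now choose $M$ large enough that $\alpha_F m \ge n+1$ for all $m > M$; then for each such $m$ the hypothesis of Lemma \ref{lem:ordernagree} is satisfied with the integer $n$ and the constant $c = c_F$, so $P_n$ applied to $F \smsh G \to P_m F \smsh G$ is an equivalence. Since $P_k = P_k P_n$ for $k \le n$ (the polynomial approximation functors are compatible in the tower, as $P_k P_n F \simeq P_k F$ for $k \le n$ by the universal property in Remark \ref{rmk:infinityweiss}), applying $P_k$ also yields an equivalence for every $0 \le k \le n$.

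Finally I would run the symmetric argument for $P_m F \smsh G \to P_m F \smsh P_m G$ using the rapid convergence of $G$, obtaining a possibly different bound $M'$, and take the maximum of the two bounds. The composite of two transformations that each become equivalences after $P_k$ is again an equivalence after $P_k$, so with $M$ replaced by $\max(M, M')$ the corollary follows. The only point requiring any care — and the step I expect to be the mild obstacle — is checking that smashing the fiber sequence with the connective spectrum $G(W)$ (respectively $P_m F(W)$) genuinely preserves the connectivity bound uniformly in $W$; this is where connectivity of the functors in the definition of $\Sp^{\J}_{\text{conv}}$ is used, and it is precisely what prevents the statement from holding without the convergence hypothesis in this form.
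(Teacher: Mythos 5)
Your argument is correct and is exactly the one the paper intends: the paper declares the corollary ``immediate'' from Lemma \ref{lem:ordernagree}, and your factorization through $P_mF\smsh G$ plus the connectivity bookkeeping is the standard way to fill that in. The only minor imprecision is that $P_mF(W)$ need not be literally connective (for small $W$ the estimate only gives a uniform lower bound around $-c$), but as you note a uniform bound below is all Lemma \ref{lem:ordernagree} requires, since its constant $c$ is arbitrary.
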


We will show that this implies the following further corollary:

\begin{cor}\label{cor:varphiequiv}
The map $\varphi_n$ constructed above is an equivalence for all $n$ when $F,G\in \Sp^{\J}_{\text{conv}}$ are rapidly convergent functors.  
\end{cor}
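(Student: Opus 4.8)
The plan is to deduce Corollary \ref{cor:varphiequiv} from Corollary \ref{cor:rapidconv} in two stages: first reduce to the case where $F$ and $G$ are polynomial functors, and then settle that case by a direct computation built on Weiss's classification of homogeneous functors. The reduction rests on one structural observation about the Day convolution on cofiltered spectra constructed in Appendix \ref{app:day}: the spectrum $(\mathrm{Tow}(F)\otimes\mathrm{Tow}(G))_n$ is a finite limit involving only the spectra $P_iF\wedge P_jG$ with $i+j\le n$ (this is the reason that closure of degree-$n$ polynomial functors under finite limits forces $(\Cofil(\Sp^{\J}))_{\text{poly}}$ to be closed under $\otimes$). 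In particular $(\mathrm{Tow}(F)\otimes\mathrm{Tow}(G))_n$ depends on the cofiltered spectra $\mathrm{Tow}(F)$ and $\mathrm{Tow}(G)$ only through their stages $\le n$, and $\varphi_n$ is the canonical comparison out of $P_n(F\wedge G)$.

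For the reduction, fix $n$ and use Corollary \ref{cor:rapidconv} to choose an integer $m\ge n$ such that $F\wedge G\to P_mF\wedge P_mG$ becomes an equivalence after applying $P_k$ for every $0\le k\le n$. The approximation maps $F\to P_mF$ and $G\to P_mG$, together with the naturality of the oplax symmetric monoidal structure on $\mathrm{Tow}$, produce a commutative square
$$
\begin{tikzcd}
P_n(F\wedge G)\arrow[r,"\varphi_n^{F,G}"]\arrow[d] & (\mathrm{Tow}(F)\otimes\mathrm{Tow}(G))_n\arrow[d] \\
P_n(P_mF\wedge P_mG)\arrow[r,"\varphi_n^{P_mF,P_mG}"] & (\mathrm{Tow}(P_mF)\otimes\mathrm{Tow}(P_mG))_n .
\end{tikzcd}
$$
The left vertical map is $P_n$ applied to $F\wedge G\to P_mF\wedge P_mG$, hence an equivalence by the choice of $m$. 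The right vertical map is an equivalence since $\mathrm{Tow}(F)\to\mathrm{Tow}(P_mF)$ is an equivalence on every stage $\le m$ (the standard identity $P_kP_m\simeq P_{\min(k,m)}$), while by the observation above the $n$-th stage of the tensor product only sees stages $\le n\le m$. Therefore $\varphi_n^{F,G}$ is an equivalence if and only if $\varphi_n^{P_mF,P_mG}$ is; as $n$ was arbitrary, it suffices to prove the corollary when $F$ and $G$ are polynomial.

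In the polynomial case the towers $\mathrm{Tow}(F)$ and $\mathrm{Tow}(G)$ are eventually constant. Since $\mathrm{Tow}$, each $P_n$, and the Day convolution in each variable all preserve fibre sequences, and $\varphi$ is natural, the fibre sequence $D_dF\to F\to P_{d-1}F$ (and its analogue for $G$) together with the two-out-of-three property reduces, by induction on $\max(\deg F,\deg G)$, to two cases. When $F$ and $G$ have degree $0$ they are constant functors, $\mathrm{Tow}$ of a constant functor is a constant cofiltered spectrum, the constant-diagram functor $r^*$ is symmetric monoidal, and $\varphi$ is the resulting equivalence. When $F$ is $a$-homogeneous and $G$ is $b$-homogeneous, Weiss's classification gives $F(W)\simeq(\Theta\wedge S^{aW})_{hU(a)}$ and $G(W)\simeq(\Phi\wedge S^{bW})_{hU(b)}$; the projection formula identifies $F\wedge G$ with the $(a+b)$-homogeneous functor classified by $\mathrm{Ind}_{U(a)\times U(b)}^{U(a+b)}(\Theta\wedge\Phi)$, and a direct computation shows that both $\mathrm{Tow}(F\wedge G)$ and $\mathrm{Tow}(F)\otimes\mathrm{Tow}(G)$ equal the cofiltered spectrum which vanishes in stages $<a+b$ and is constant at $F\wedge G$ afterwards, with $\varphi$ the identity between them.

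I expect the main obstacle to be the homogeneous case just described. It is routine to see that the source and target of $\varphi_n$ agree \emph{abstractly}; the real work is to check that the oplax structure map---defined rather indirectly via the relative adjunction of Construction \ref{cnstr:tower}---realizes that abstract equivalence, which means unwinding $\mathrm{Tow}$ and the cofiltered Day convolution into explicit models. An alternative route that sidesteps this chase is to show that the functor $F\mapsto(P_0F, D_1F, D_2F,\dots)$ from rapidly convergent functors to graded spectra is \emph{strong} symmetric monoidal---equivalently, that the Weiss layers of a pointwise smash product satisfy $D_n(F\wedge G)\simeq\bigvee_{i+j=n}D_iF\wedge D_jG$ compatibly with $\varphi$. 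Granting this, $\varphi$ is an equivalence on the associated graded of the two towers, and since for rapidly convergent $F$ and $G$ both towers converge, $\varphi$ is an equivalence of cofiltered spectra; in particular each $\varphi_n$ is an equivalence.
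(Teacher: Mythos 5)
Your first step---replacing $F$ and $G$ by polynomial approximations via Corollary \ref{cor:rapidconv} and checking that both vertical maps in your comparison square are equivalences---is exactly the paper's reduction (your observation that $(\mathrm{Tow}(F)\otimes\mathrm{Tow}(G))_n$ depends only on tower stages $\le n$ is a useful elaboration of what the paper leaves implicit). Where you genuinely diverge is in the polynomial case. You propose an induction on degree via the fibre sequences $D_dF\to F\to P_{d-1}F$, terminating in an explicit identification of both cofiltered objects when $F$ and $G$ are homogeneous, and you correctly single out the sore point: verifying that the indirectly defined oplax structure map $\varphi$ realizes the abstract equivalence. The paper avoids precisely this by never comparing the two towers stage by stage. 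It applies Lemma \ref{lem:cubes} with $X=\mathrm{Tow}(F)$, $Y=\mathrm{Tow}(G)$ to see that the successive fibres of the convolution are $\coprod_{p+q=n}D_pF\smsh D_qG$, each homogeneous of degree $p+q$; hence the fibre of the canonical map $F\smsh G\to(\mathrm{Tow}(F)\otimes\mathrm{Tow}(G))_n$ is a finite iterated extension of functors killed by $P_n$, and since $P_n$ preserves finite limits, applying $P_n$ yields the equivalence $\varphi_n$ in one stroke---no identification of $\mathrm{Tow}(F\smsh G)$ with $\mathrm{Tow}(F)\otimes\mathrm{Tow}(G)$ as cofiltered objects is ever needed. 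Both routes rest on the same non-formal input, namely that the smash of a $p$-homogeneous and a $q$-homogeneous functor is $(p+q)$-homogeneous (the fact flagged in Remark \ref{rmk:goodwilliecase} as true but absent from the literature), so your homogeneous-case computation via Weiss's classification and induction is essentially supplying in detail what the paper's single sentence presupposes; the price is the model-chasing you flag as the main obstacle, which the paper's exactness argument renders unnecessary.
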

The proof will require the following basic lemma whose proof we will record at the end of Appendix \ref{sect:monoidal2}.  
\begin{lem}\label{lem:cubes}
Let $X$, $Y \in \Cofil(\Sp^{\J})$ and $n>0$ an integer.  Then we have the following formula for the successive fibers: $$\fib ((X\otimes Y)_{n} \to (X\otimes Y)_{n-1}) \simeq \coprod_{p+q=n} \fib (X_p\to X_{p-1}) \wedge \fib (Y_q \to Y_{q-1}).$$
\end{lem}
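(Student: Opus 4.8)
\medskip

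The plan is to reduce the statement to the analogous, and much more transparent, formula for the successive fibers of the Day convolution of two \emph{graded} cofiltered spectra, using the adjunction between filtered (here: cofiltered) and graded objects reviewed in Section \ref{sec:FilGra}. Concretely, first I would recall the monoidal structure on $\Cofil(\Sp^{\J})$ set up in Appendix \ref{app:day} and note that, exactly as in the filtered case, the tensor product $(X\otimes Y)_n$ is computed as an iterated (co)limit over the poset of pairs $(p,q)$ with $p+q\le n$; since everything is pointwise in $\J$ and stable, this is a finite colimit of smash products $X_p\wedge Y_q$. The key observation is that the map $(X\otimes Y)_n\to (X\otimes Y)_{n-1}$ is, under this description, the map collapsing the ``outermost'' corners $p+q=n$, so its fiber is controlled by precisely those corners.

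\medskip

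Second, I would make this precise by passing to associated gradeds. Applying the (symmetric monoidal, by \cite[Proposition 3.2.1]{LurieRot} in the cofiltered variant) associated-graded functor $\gr\colon \Cofil(\Sp^{\J})\to \Gr(\Sp^{\J})$, one has $\gr(X\otimes Y)\simeq \gr(X)\otimes \gr(Y)$, and the $n$th graded piece of $\gr(X\otimes Y)$ is by definition $\fib((X\otimes Y)_n\to (X\otimes Y)_{n-1})$. On the other hand, the explicit formula for graded Day convolution recorded in Section \ref{sec:FilGra} gives
$$
(\gr(X)\otimes \gr(Y))_n \;\simeq\; \bigvee_{p+q=n} \gr(X)_p \wedge \gr(Y)_q,
$$
and $\gr(X)_p=\fib(X_p\to X_{p-1})$, $\gr(Y)_q=\fib(Y_q\to Y_{q-1})$. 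Since in $\Sp^{\J}$ a finite wedge is a finite coproduct, combining these identifications yields exactly the claimed formula
$$
\fib\bigl((X\otimes Y)_n\to (X\otimes Y)_{n-1}\bigr)\;\simeq\;\coprod_{p+q=n}\fib(X_p\to X_{p-1})\wedge\fib(Y_q\to Y_{q-1}).
$$

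\medskip

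The main thing to be careful about — and where I expect the only real work to lie — is checking that the associated-graded formalism of \cite{LurieRot}, which is stated for filtered objects indexed by $\Z_{\ge 0}$, transports to the cofiltered (i.e.\ $\Z_{\ge 0}^{op}$-indexed) setting compatibly with the monoidal structure from Appendix \ref{app:day}; this is where one must pin down conventions so that ``$\gr$ of the tensor is the tensor of the $\gr$'s'' holds with the indexing written above rather than some reindexing. An alternative, more hands-on route that avoids invoking $\gr$ is to argue directly: write $(X\otimes Y)_n$ as the colimit over the punctured cube / simplex $\{(p,q): p+q\le n\}$ of $X_p\wedge Y_q$, observe that this poset is obtained from $\{p+q\le n-1\}$ by attaching the anti-diagonal $\{p+q=n\}$, and use that attaching these top corners contributes exactly $\bigvee_{p+q=n} X_p\wedge Y_q$ modulo the already-present lower terms — then smashing is exact in each variable in $\Sp^{\J}$, so the fiber telescopes to $\coprod_{p+q=n}\fib(X_p\to X_{p-1})\wedge\fib(Y_q\to Y_{q-1})$. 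Either way the content is purely formal once the bookkeeping of the Day convolution indexing is straight; the placement of this lemma at the end of Appendix \ref{sect:monoidal2}, after the monoidal structure on $\Cofil$ has been constructed, is exactly so that these conventions are available.
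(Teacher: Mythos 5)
Your second, ``hands-on'' route is in spirit the paper's actual proof, but both of your routes have a variance problem that you should fix. The Day convolution on $\Cofil(\Sp^{\J})=\Fun(\Z_{\ge 0}^{op},\Sp^{\J})$ is defined by taking the convolution on $\Fun(\Z_{\ge 0},(\Sp^{\J})^{op})$ and passing back to opposites, so $(X\otimes Y)_n$ is a finite \emph{limit} $\lim_{A_n} X_p\wedge Y_q$ over the poset $A_n=\{(p,q):p+q\le n\}\subset \Z_{\ge 0}^{op}\times\Z_{\ge 0}^{op}$, not a colimit. (For $n=1$ the diagram is a cospan with terminal vertex $(0,0)$; its colimit is just $X_0\wedge Y_0$, which is wrong, while its limit is the correct pullback $X_1\wedge Y_0\times_{X_0\wedge Y_0}X_0\wedge Y_1$.) Consequently your picture of ``attaching the antidiagonal contributes a wedge modulo lower terms'' must be dualized: the paper compares $T|_{A_n}$ with the \emph{right} Kan extension $\overline{T_n}$ of $T|_{A_{n-1}}$ along $A_{n-1}\hookrightarrow A_n$, identifies $\fib\bigl((X\otimes Y)_n\to(X\otimes Y)_{n-1}\bigr)$ with $\lim_{A_n}\fib(T|_{A_n}\to\overline{T_n})$, observes that this fiber vanishes off the antidiagonal and equals $\fib(X_p\to X_{p-1})\wedge\fib(Y_q\to Y_{q-1})$ at $(p,q)$ with $p+q=n$ (using $\fib(X_p\wedge Y_q\to X_{p-1}\wedge Y_q\times_{X_{p-1}\wedge Y_{q-1}}X_p\wedge Y_{q-1})\simeq\fib(X_p\to X_{p-1})\wedge\fib(Y_q\to Y_{q-1})$), and then uses that a finite product of spectra is a finite coproduct. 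The end result agrees with yours because of stability and finiteness, but the intermediate bookkeeping as you wrote it does not.

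Your primary route via a symmetric monoidal $\gr$ is genuinely different from the paper's and would be clean if available, but the gap you flag is real and not merely a matter of conventions: \cite[Proposition 3.2.1]{LurieRot} is proved for filtered objects in a category satisfying the hypotheses of \cite[Proposition 2.2.6.16]{HA}, and $(\Sp^{\J})^{op}$ fails those hypotheses --- this is exactly why the paper had to introduce Variant \ref{var:day} to even define the monoidal structure on $\Cofil$. So ``$\gr$ of the tensor is the tensor of the $\gr$'s'' for cofiltered objects cannot be cited off the shelf; one would have to rerun the argument using only finite colimits in $(\Sp^{\J})^{op}$. Doing that honestly amounts to the same corner-by-corner computation the paper performs, which is presumably why the authors chose the direct Kan-extension argument.
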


We now prove the corollary:

\begin{proof}[Proof of Corollary \ref{cor:varphiequiv}]
Corollary \ref{cor:varphiequiv} implies that by replacing $F$ and $G$ by appropriate polynomial approximations, it suffices to consider the case where $F$ and $G$ are polynomial of degree $m$ for some $m$ (and thus, have finite Weiss towers).   Note further that Lemma \ref{lem:cubes} applied to the case $X = \mathrm{Tow}(F)$, $Y=\mathrm{Tow}(G)$ implies that $$\text{fib}((\mathrm{Tow}(F)\otimes \mathrm{Tow}(G) )_{n+1} \to (\mathrm{Tow}(F) \otimes \mathrm{Tow}(G))_n)$$ is homogeneous of degree $n+1$.  It follows that the fiber of the natural map $$F\smsh G \to (\mathrm{Tow}(F)\otimes \mathrm{Tow}(G))_n$$  is a finite limit of functors killed by $P_n$.  Since $P_n$ commutes with finite limits, we conclude that the natural map $\varphi_n: P_n(F\smsh G)\to (\mathrm{Tow}(F)\otimes \mathrm{Tow}(G))_n$ is an equivalence.  
\end{proof}

The proof shows further that rapidly convergent functors are closed under the tensor product.  In total, we have now proven the following theorem:

\begin{thm}\label{thm:weissmonoidal}
The Weiss tower defines a symmetric monoidal functor $$\mathrm{Tow}: \Sp^{\J}_{\text{conv}} \to \Cofil(\Sp^{\J}_{\text{conv}}).$$
\end{thm}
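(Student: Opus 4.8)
The plan is to assemble Theorem \ref{thm:weissmonoidal} from the pieces already in hand, being careful only about the bookkeeping of the two monoidal structures in play (Day convolution on $\Sp^{\J}$ and Day convolution on $\Cofil(\Sp^{\J})$). First I would recall that the preceding lemma already exhibits $\mathrm{Tow}$ as an \emph{oplax} symmetric monoidal functor: it is the composite of the symmetric monoidal constant-diagram functor $r^*$ with the oplax symmetric monoidal left adjoint $\mathcal{P}$ to the (genuinely symmetric monoidal) inclusion $j\colon \Cofil(\Sp^{\J})_{\text{poly}} \hookrightarrow \Cofil(\Sp^{\J})$. So the only thing left to upgrade oplax to strong monoidal is to check that the structure maps are equivalences, and by the standard unwinding of oplax structure on a composite this amounts to exactly the maps $\varphi_n\colon P_n(F\wedge G)\to (\mathrm{Tow}(F)\otimes\mathrm{Tow}(G))_n$ (together with the unit map, which is handled the same way, noting the unit of $\Sp^{\J}$ is already a polynomial functor of every degree).

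Next I would invoke Corollary \ref{cor:varphiequiv}, which is precisely the statement that $\varphi_n$ is an equivalence for all $n$ whenever $F,G\in\Sp^{\J}_{\text{conv}}$. At this point the only remaining subtlety is the target category: to say that $\mathrm{Tow}$ is symmetric monoidal \emph{landing in $\Cofil(\Sp^{\J}_{\text{conv}})$} one must check that the convolution tensor product of two convergent towers is again a convergent tower, equivalently that $\Sp^{\J}_{\text{conv}}$ is closed under $\wedge$ inside $\Sp^{\J}$. As the text observes right after the proof of Corollary \ref{cor:varphiequiv}, this is visible from the same argument: by Lemma \ref{lem:cubes} the successive fiber $\fib\bigl((\mathrm{Tow}(F)\otimes\mathrm{Tow}(G))_{n}\to(\mathrm{Tow}(F)\otimes\mathrm{Tow}(G))_{n-1}\bigr)$ is a coproduct $\coprod_{p+q=n}D_pF\wedge D_qG$ of smash products of homogeneous layers, hence homogeneous of degree $n$; and the connectivity estimates defining rapid convergence for $F$ and $G$ force, via the Milnor sequence, a linear-in-$\dim(W)$ connectivity estimate for $F\wedge G\to P_n(F\wedge G)$ with a new constant $\alpha'$ depending only on the $\alpha$'s of $F$ and $G$ (one can take $\alpha' = \min(\alpha_F,\alpha_G)$, or more conservatively a fixed positive constant). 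Connectivity of $F\wedge G$ is immediate since the smash of connective spectra is connective.

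So the proof is really just: (i) cite the oplax structure lemma; (ii) identify its structure maps with the $\varphi_n$ and the unit map; (iii) cite Corollary \ref{cor:varphiequiv} to conclude these are equivalences; (iv) observe $\Sp^{\J}_{\text{conv}}$ is closed under $\wedge$ and under the unit, so the whole monoidal structure restricts. The one genuine verification, rather than pure citation, is step (iv)'s closure claim, and even that is a short connectivity bookkeeping already sketched in the surrounding text. I do not anticipate a serious obstacle; the main care point is purely organizational — making sure the oplax-to-strong upgrade is stated at the level of the symmetric monoidal structure and not just pointwise, which is automatic from the monoidal adjunction formalism of \cite[\S7.3.2]{HA} together with the fact that an oplax monoidal functor whose structure maps are all equivalences is strong monoidal.

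\begin{proof}
By the preceding lemma, $\mathrm{Tow} = \mathcal{P}\circ r^*$ is oplax symmetric monoidal, where $r^*\colon \Sp^{\J}\to\Cofil(\Sp^{\J})$ is the (strong symmetric monoidal) constant-diagram functor and $\mathcal{P}$ is oplax symmetric monoidal as the left adjoint of the symmetric monoidal inclusion $j\colon \Cofil(\Sp^{\J})_{\text{poly}}\hookrightarrow\Cofil(\Sp^{\J})$. An oplax symmetric monoidal functor whose lax (oplax) structure maps are all equivalences is strong symmetric monoidal, so it suffices to verify that the structure maps of $\mathrm{Tow}$ are equivalences after restricting to $\Sp^{\J}_{\text{conv}}$, and that this restricted functor in fact lands in $\Cofil(\Sp^{\J}_{\text{conv}})$.

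For the binary structure map: unwinding the oplax structure on the composite $\mathcal{P}\circ r^*$, the comparison map associated to $F, G\in\Sp^{\J}$ is, on the $n$th term, precisely the map $\varphi_n\colon P_n(F\wedge G)\to(\mathrm{Tow}(F)\otimes\mathrm{Tow}(G))_n$ described above. By Corollary \ref{cor:varphiequiv}, $\varphi_n$ is an equivalence for all $n$ whenever $F, G\in\Sp^{\J}_{\text{conv}}$. For the unit: the unit of $\Sp^{\J}$ under Day convolution is the functor corepresented by $0\in\J$ smashed with $\mathbb{S}$, which is polynomial of degree $n$ for every $n$; hence its Weiss tower is constant at this functor, which is exactly the unit of the Day convolution structure on $\Cofil(\Sp^{\J})_{\text{poly}}$, so the unit structure map is an equivalence.

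It remains to check that $\Sp^{\J}_{\text{conv}}$ is closed under $\wedge$ (the unit being evidently rapidly convergent). Given rapidly convergent $F, G$, the spectrum $F(W)\wedge G(W)$ is connective since it is a smash product of connective spectra. By Lemma \ref{lem:cubes} applied to $X = \mathrm{Tow}(F)$ and $Y = \mathrm{Tow}(G)$, the successive fiber of $\mathrm{Tow}(F)\otimes\mathrm{Tow}(G)$ in degree $n$ is $\coprod_{p+q=n} D_pF\wedge D_qG$, which is homogeneous of degree $n$; since $\varphi_n$ is an equivalence, the fiber of $F\wedge G\to P_n(F\wedge G)$ is a finite limit of functors whose layers are smashes of homogeneous layers of $F$ and $G$. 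The rapid convergence estimates for $F$ and $G$, together with the fact that smash products and (finite) limits of spectra change connectivity in a controlled, linear-in-$\dim(W)$ way, and the Milnor sequence, show that $F\wedge G\to P_n(F\wedge G)$ is $(\alpha' n)\dim(W) - c'$ connected for suitable constants $\alpha', c' > 0$ depending only on the constants of $F$ and $G$. Hence $F\wedge G\in\Sp^{\J}_{\text{conv}}$, and $\mathrm{Tow}$ restricts to a symmetric monoidal functor $\Sp^{\J}_{\text{conv}}\to\Cofil(\Sp^{\J}_{\text{conv}})$.
\end{proof}
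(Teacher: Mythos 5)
Your proposal is correct and follows essentially the same route as the paper: the oplax structure from the preceding lemma, identification of its structure maps with the $\varphi_n$, Corollary \ref{cor:varphiequiv} to upgrade oplax to strong, and closure of $\Sp^{\J}_{\text{conv}}$ under the smash product (which the paper dispatches with the single sentence ``the proof shows further that rapidly convergent functors are closed under the tensor product''). Your treatment of the unit and of the closure estimate is slightly more explicit than the paper's, but there is no difference in method.
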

\begin{rmk}\label{rmk:goodwilliecase}
Theorem \ref{thm:weissmonoidal} and its proof work equally well in Goodwillie calculus.  There, the hypothesis on convergence can be removed, and the adjoining discussion can be replaced with the observation \cite[Lemma 6.10]{GoodwillieIII} that the smash product of an $n$-reduced functor and an $m$-reduced functor is $(n+m)$-reduced.  In Weiss calculus, this fact is also true but not in the literature, so we have opted to give the above proof which is sufficient for our application.  
\end{rmk}

Combining this with Example \ref{ex:aronefunctor}, we obtain:

\begin{cor}\label{cor:aronemonoidal}
Let $V\in \J$ be a complex vector space.  
The functor $F_V\in \Sp^{\J}$ defined by $$F_V(W) = \Sigma^{\infty}_+ \Omega \J(V, V\oplus W)$$ determines a cofiltered associative algebra $\mathrm{Tow}(F_V) \in \Alg_{\mathbb{A}_{\infty}}(\Cofil(\Sp^{\J})).$
\end{cor}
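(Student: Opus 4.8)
The plan is to deduce this formally from Theorem \ref{thm:weissmonoidal}. A symmetric monoidal functor carries $\mathbb{E}_1 = \mathbb{A}_\infty$-algebras to $\mathbb{A}_\infty$-algebras, so it suffices to (a) check that $\Sp^{\J}_{\text{conv}}$ is a full symmetric monoidal subcategory of $\Sp^{\J}$, and (b) exhibit $F_V$ as an $\mathbb{A}_\infty$-algebra object of $\Sp^{\J}$ whose underlying functor is rapidly convergent. Then $F_V \in \Alg_{\mathbb{A}_\infty}(\Sp^{\J}_{\text{conv}})$, and $\mathrm{Tow}$ produces the desired cofiltered algebra.

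For (a): the monoidal unit of $\Sp^{\J}$ under the pointwise smash product is the constant functor at the sphere spectrum, which is polynomial of degree $0$ and hence (being connective) trivially rapidly convergent; and $\Sp^{\J}_{\text{conv}}$ is closed under $\smsh$ by the closure statement recorded at the end of the proof of Corollary \ref{cor:varphiequiv}. Thus $\Alg_{\mathbb{A}_\infty}(\Sp^{\J}_{\text{conv}}) \hookrightarrow \Alg_{\mathbb{A}_\infty}(\Sp^{\J})$ is fully faithful with essential image the $\mathbb{A}_\infty$-algebras whose underlying functor lies in $\Sp^{\J}_{\text{conv}}$. For (b): the assignment $W \mapsto \J(V, V \oplus W)$ is naturally a functor from $\J$ to pointed spaces, with basepoint the canonical isometry $V \hookrightarrow V \oplus W$; this basepoint is natural in $W$ because an embedding $W \hookrightarrow W'$ induces $V \oplus W \hookrightarrow V \oplus W'$ fixing $V$, and postcomposition preserves the canonical isometries. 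Applying the based loop functor pointwise gives a functor $\J \to \Alg_{\mathbb{A}_\infty}(\mathcal{S})$ (valued in grouplike $\mathbb{A}_\infty$-spaces), and postcomposing with the symmetric monoidal functor $\Sigma^{\infty}_+ \colon (\mathcal{S}, \times) \to (\Sp, \smsh)$ exhibits $F_V$ as an object of $\Fun(\J, \Alg_{\mathbb{A}_\infty}(\Sp)) = \Alg_{\mathbb{A}_\infty}(\Sp^{\J})$, the last identification using that the monoidal structure on $\Sp^{\J}$ is pointwise. Its underlying functor is rapidly convergent by Example \ref{ex:aronefunctor}, so $F_V \in \Alg_{\mathbb{A}_\infty}(\Sp^{\J}_{\text{conv}})$.

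Finally, Theorem \ref{thm:weissmonoidal} gives a symmetric monoidal functor $\mathrm{Tow} \colon \Sp^{\J}_{\text{conv}} \to \Cofil(\Sp^{\J}_{\text{conv}})$, hence an induced functor $\Alg_{\mathbb{A}_\infty}(\Sp^{\J}_{\text{conv}}) \to \Alg_{\mathbb{A}_\infty}(\Cofil(\Sp^{\J}_{\text{conv}}))$; composing with the evident symmetric monoidal inclusion $\Cofil(\Sp^{\J}_{\text{conv}}) \hookrightarrow \Cofil(\Sp^{\J})$ lands us in $\Alg_{\mathbb{A}_\infty}(\Cofil(\Sp^{\J}))$. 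Thus $\mathrm{Tow}(F_V)$, whose underlying tower is $P_0 F_V \longleftarrow P_1 F_V \longleftarrow P_2 F_V \longleftarrow \cdots$, is the asserted cofiltered associative algebra. The only step requiring genuine care is the naturality in $W \in \J$ used in (b) — that the pointwise loop-space multiplications and the canonical basepoints assemble coherently in the functor category — together with the (standard) symmetric monoidality of $\Sigma^{\infty}_+$; everything else is formal given Theorem \ref{thm:weissmonoidal}.
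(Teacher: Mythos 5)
Your argument is correct and matches the paper's approach: the paper also obtains this corollary by combining Theorem \ref{thm:weissmonoidal} with Example \ref{ex:aronefunctor}, leaving the details — that $F_V$ is naturally a pointed-space-valued functor whose pointwise loop space yields an $\mathbb{A}_\infty$-object of $\Sp^{\J}_{\text{conv}}$, and that symmetric monoidal functors send $\mathbb{A}_\infty$-algebras to $\mathbb{A}_\infty$-algebras — implicit. You have spelled out exactly those details, and your checks (the unit is rapidly convergent, rapidly convergent functors are closed under $\smsh$, the basepoints of $\J(V, V\oplus W)$ are natural in $W$, $\Sigma^{\infty}_+$ is symmetric monoidal) are all sound.
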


\section{Stable \texorpdfstring{$\mathbb{A}_\infty$}{Aoo} Splittings} \label{sec:AooSplit}

In this brief section, we assemble results proved above in order to obtain what is labeled Theorem \ref{thm:MainAoo} in the Introduction:

\begin{thm} \label{thm:MainAooInText}
As an $\mathbb{A}_\infty$-algebra object in filtered spectra, the Bott filtration of $\Sigma^{\infty}_+ \Omega SU(n)$ is equivalent to its associated graded.
\end{thm}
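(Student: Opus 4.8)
The plan is to apply the abstract splitting criterion of Theorem \ref{thm:SplitMachine} with $n=1$ to the filtered $\mathbb{A}_\infty$-ring spectrum $X = \Sigma^{\infty}_+ \{F_{n,k}\}$ produced in Theorem \ref{thm:AooFil}. What is required is an $\mathbb{A}_\infty$ cofiltered spectrum $Y$ whose limit recovers $\Sigma^{\infty}_+ \Omega SU(n)$ as an $\mathbb{A}_\infty$-ring, and whose induced maps $X_i \to Y_i$ are equivalences. The cofiltered object will be the Weiss--Taylor tower of Corollary \ref{cor:aronemonoidal}, specialized appropriately. Indeed, taking $V$ of dimension $n-1$ so that $\J(V, V\oplus \mathbb{C}) \simeq SU(n)$, the functor $F_V(W) = \Sigma^{\infty}_+ \Omega \J(V, V\oplus W)$ is rapidly convergent (Example \ref{ex:aronefunctor}), and hence by Corollary \ref{cor:aronemonoidal} its Weiss tower $\mathrm{Tow}(F_V)$ is an $\mathbb{A}_\infty$-algebra in $\Cofil(\Sp^{\J})$. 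Evaluating at $W = \mathbb{C}$ (which is a symmetric monoidal operation, being evaluation of functors) yields an $\mathbb{A}_\infty$-cofiltered spectrum
$$Y \;:=\; \mathrm{Tow}(F_V)(\mathbb{C}) \;=\; \bigl(P_0 F_V(\mathbb{C}) \longleftarrow P_1 F_V(\mathbb{C}) \longleftarrow P_2 F_V(\mathbb{C}) \longleftarrow \cdots\bigr),$$
whose limit is $F_V(\mathbb{C}) = \Sigma^{\infty}_+ \Omega SU(n)$ as an $\mathbb{A}_\infty$-ring by rapid convergence. This verifies hypothesis (1) of Theorem \ref{thm:SplitMachine}.

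The heart of the argument is hypothesis (2): identifying the finite stages of the Weiss tower with the Bott filtration. Here I would invoke exactly the argument of Arone \cite{Arone} reviewed in the introduction. The functor $F_V$ carries a filtration by subfunctors $F_0 \to F_1 \to \cdots \to F_V$ with $F_k(\mathbb{C}) = \Sigma^{\infty}_+ F_{n,k}$, and the successive quotients $F_k/F_{k-1}$ are homogeneous of degree $k$ (this is the computation $D_k F_V(W) = \Omega^\infty(s_k(V)\wedge S^{kW})_{hU(k)}$ cited in Example \ref{ex:aronefunctor}, combined with Goodwillie's observation \cite[Example 1.20]{GoodwillieIII} that a functor built by finitely many extensions of $j$-homogeneous pieces for $j \le k$ is polynomial of degree $k$ and agrees with the stage of the tower). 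Consequently $P_k F_V \simeq F_k$ as functors, and evaluating at $\mathbb{C}$ identifies $Y_k \simeq \Sigma^{\infty}_+ F_{n,k} = X_k$. The one subtlety is to check that these equivalences are compatible with the maps $X_k \to Y_k$ arising from the abstract setup of Theorem \ref{thm:SplitMachine}, i.e.\ that the natural transformation $X \to Y$ in filtered/cofiltered spectra restricts on each stage to Arone's identification; this is a matter of unwinding the universal property of $P_k$ (the map $F_V \to P_k F_V = F_k$ and the map $F_k \to F_V$ of the Bott filtration compose to the identity on $F_k$ in the relevant sense).

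With both hypotheses verified, Theorem \ref{thm:SplitMachine} yields that $X$ is $\mathbb{E}_1$-split, i.e.\ $\Sigma^{\infty}_+\{F_{n,k}\} \simeq IY'$ for some graded $\mathbb{A}_\infty$-algebra $Y'$, and taking associated graded gives $\mathrm{gr}(X) \simeq \mathrm{gr}(IY') \simeq Y'$, so that $X \simeq I(\mathrm{gr}(X))$ as filtered $\mathbb{A}_\infty$-algebras. This is precisely the assertion that the Bott filtration is equivalent, as an $\mathbb{A}_\infty$-algebra in filtered spectra, to its associated graded. The main obstacle, and the place where all the real work of the paper is concentrated, is establishing the multiplicativity of the Weiss tower construction --- that $\mathrm{Tow}$ is symmetric monoidal (Theorem \ref{thm:weissmonoidal}) --- since it is this that upgrades Arone's splitting maps to a coherent $\mathbb{A}_\infty$-cofiltered algebra; given that input, the present theorem is a formal consequence.
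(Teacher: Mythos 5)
Your proposal is correct and follows essentially the same route as the paper: apply Theorem \ref{thm:SplitMachine} with the cofiltered $\mathbb{A}_\infty$-algebra supplied by the Weiss tower of $F_V$ (Corollary \ref{cor:aronemonoidal}) evaluated at $V=\mathbb{C}^{n-1}$, $W=\mathbb{C}$, and verify the two hypotheses via rapid convergence and Arone's identification of $P_kF_V$ with the Bott filtration stages. The paper simply cites the proof of \cite[Theorem 1.2]{Arone} for the compatibility of the composites that you spell out slightly more explicitly; no substantive difference.
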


\begin{proof}
The construction of the stable Bott filtration as an $\mathbb{A}_\infty$-filtered spectrum is our Theorem \ref{thm:AooFil}.  According to our Theorem \ref{thm:SplitMachine}, to complete the proof of Theorem \ref{thm:MainAooInText} it suffices to produce an $\mathbb{A}_\infty$-cofiltered spectrum with limit $\Sigma^{\infty}_+ \Omega SU(n)$ and with the property that certain composites are equivalences.

As explained in the Introduction, we follow Arone \cite{Arone} in producing the desired cofiltered spectrum by means of Weiss calculus.  Consider, in the notation of Section \ref{sec:MultWeiss} and particularly Example \ref{ex:aronefunctor}, the functor $$F_V:\mathcal{J} \longrightarrow \Sp$$ given by $F_V(W) = \Sigma^{\infty}_+ \Omega \mathcal{J}(V, V \oplus W)$.  Corollary \ref{cor:aronemonoidal} implies that the Taylor tower of $F_V$, applied to $W$, is an $\mathbb{A}_\infty$-cofiltered spectrum with limit $\Sigma^{\infty}_+ \Omega \mathcal{J}(V,V \oplus W)$.

Specializing to the case $V=\mathbb{C}^{n-1}$, $W=\mathbb{C}$, it is straightforward to see that $\mathcal{J}(\mathbb{C}^{n-1},\mathbb{C}^{n-1} \oplus \mathbb{C})$ is equivalent to $SU(n)$.  Roughly speaking, this is because any embedding of $\mathbb{C}^{n-1}$ into $\mathbb{C}^n$ may be extended in a unique way to an automorphism of $\mathbb{C}^n$ that is unitary of determinant one.  Thus, applying Corollary \ref{cor:aronemonoidal} in the case $V=\mathbb{C}^{n-1}$, $W=\mathbb{C}$ gives an $\mathbb{A}_\infty$-cofiltered spectrum with the desired limit.  To complete the proof of Theorem \ref{thm:MainAooInText} it suffices then to check that certain composites are equivalences.  In fact, one of the main results of \cite{Arone} is that those composites are equivalences (see the proof of \cite[Theorem 1.2]{Arone}).
\end{proof}

\begin{rmk}
Mitchell and Richter constructed \cite{CrabbBarcelona} a filtration not just of $$\Omega SU(n) \simeq \Omega \mathcal{J}(\mathbb{C}^{n-1},\mathbb{C}^n),$$ but also of $\Omega \mathcal{J}(V,V\oplus W)$ for a general $V$ and $W$.  Arone showed in \cite[Theorem 1.2]{Arone} that this Mitchell--Richter filtration always stably splits, and Corollary \ref{cor:aronemonoidal} provides an $\mathbb{A}_\infty$-cofiltered spectrum inducing this splitting.  We do not know, however, whether the Mitchell--Richter filtration is always $\mathbb{A}_\infty$-split because it is not known whether the filtration is $\mathbb{A}_\infty$.   
\end{rmk}

\section{\texorpdfstring{$\mathbb{E}_2$}{E2} Splittings in Complex Cobordism} \label{sec:MUE2}

We remark in this section that the $\mathbb{A}_\infty$ splitting $$\Sigma^{\infty}_+ \Omega SU(n) \simeq gr(\Sigma^{\infty}_+ \{F_{n,k}\})$$ becomes $\mathbb{E}_2$ after base-change to complex bordism.  More precisely, suppose that $gr(\Sigma^{\infty}_+ \{F_{n,k}\})$ is equipped with some graded $\mathbb{E}_2$-ring structure extending the natural graded $\mathbb{A}_\infty$-ring structure.  Construction \ref{cnstr:E2GrConstruction} provides one possible way to do this.  We will by abuse of notation use $gr(\Sigma^{\infty}_+ \{F_{n,k}\})$ also to denote the underlying \textit{ungraded} $\mathbb{E}_2$-algebra, and our main theorem is that there is an equivalence of (ungraded) $\mathbb{E}_2$-$MU$-algebras:

$$MU \smsh \Sigma^{\infty}_+ \Omega SU(n) \simeq MU \smsh gr(\Sigma^{\infty}_+ \{F_{n,k}\}).$$

Notice that the results of Section \ref{sec:AooSplit} give an $\mathbb{A}_\infty$-equivalence between these two $MU$-algebras.  This $\mathbb{A}_\infty$-$MU$-algebra equivalence is adjoint to a map of $\mathbb{A}_\infty$-$\mathbb{S}$-algebras
\begin{equation} \label{SplittingMap}
\Sigma_+^{\infty} \Omega SU(n) \longrightarrow MU \smsh gr(\Sigma^{\infty}_+ \{F_{n,k}\}).
\end{equation}

Our task in this section will be to show that (\ref{SplittingMap}) may be refined to a morphism of $\mathbb{E}_2$-ring spectra.  We do so by obstruction theory--the key fact powering our proof is that 
$$MU_{2*+1}\left(\Omega SU(n)\right) \cong 0.$$
This classical vanishing result may be proven via Atiyah--Hirzerburch spectral sequence, using the even cell-decomposition of $Gr_{SL_n}(\mathbb{C})$.

Inspired by \cite{ChadwickMandell}, we prove the following general result (implying in particular Theorem \ref{thm:MainMUE2}):

\begin{thm}
Suppose that $R$ is an $\mathbb{E}_2$-ring spectrum with no homotopy groups in odd degrees.  Then any homotopy commutative ring homomorphism
$$\Sigma^{\infty}_+ \Omega SU(n) \rightarrow R$$
lifts to a morphism of $\mathbb{E}_2$-ring spectra.  Moreover, any chosen $\mathbb{A}_\infty$ lift may be extended to an $\mathbb{E}_2$ lift.
\end{thm}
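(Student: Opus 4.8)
The plan is a Goerss--Hopkins-style obstruction-theoretic argument, in the spirit of \cite{ChadwickMandell}, organized through the spectrum of units. First I would replace the $\E_2$-ring mapping space by a space-level one. Since $\Sigma^{\infty}_+ \dashv \Omega^{\infty}$ is a symmetric monoidal adjunction, $\Map_{\Alg_{\E_2}(\Sp)}(\Sigma^{\infty}_+ \Omega SU(n), R) \simeq \Map_{\Alg_{\E_2}(\cS)}(\Omega SU(n), \Omega^{\infty}R)$, where $\Omega^{\infty}R$ carries the $\E_2$-structure coming from multiplication on $R$. Because $\Omega SU(n)$ is connected, hence grouplike, any such map factors through the identity component $SL_1(R)$ of the units $\E_2$-space $GL_1(R)$, and applying the recognition equivalence $B^2$ between connected grouplike $\E_2$-spaces and $2$-connected pointed spaces, together with $B^2 \Omega SU(n) \simeq BSU(n)$, one obtains
\[
\Map_{\Alg_{\E_2}(\Sp)}(\Sigma^{\infty}_+ \Omega SU(n), R) \;\simeq\; \Map_{\cS_\ast}\big(BSU(n),\, B^2 SL_1(R)\big),
\]
and likewise $\Map_{\Alg_{\mathbb{A}_\infty}(\Sp)}(\Sigma^{\infty}_+ \Omega SU(n), R) \simeq \Map_{\cS_\ast}(SU(n), B SL_1(R))$, with the forgetful map corresponding to looping once.

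The point of this reduction is that both sides now have a transparent parity. By Bott's theorem $H_*(\Omega SU(n);\Z)$ --- equivalently $MU_*(\Omega SU(n))$, so that $MU_{2\ast+1}(\Omega SU(n)) = 0$ --- is concentrated in even degrees, so $BSU(n)$, whose integral homology is the polynomial ring on classes in degrees $4,6,\dots,2n$, admits a CW structure with cells only in even dimensions. On the other side, since $\pi_* R$ vanishes in odd degrees we have $\pi_k\big(B^2 SL_1 R\big) \cong \pi_{k-2}R$ for $k \ge 3$ and $0$ otherwise, so $B^2 SL_1 R$ has homotopy groups concentrated in even degrees (dually, $BSL_1 R$ has homotopy only in odd degrees).

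With this in hand I would run the classical skeletal obstruction theory for $\Map_{\cS_\ast}(BSU(n), B^2 SL_1 R)$: the obstruction to extending a map over an $m$-cell lies in $H^m\big(BSU(n);\pi_{m-1}(B^2 SL_1 R)\big)$. Since $BSU(n)$ has no odd-dimensional cells, only even $m$ occur, and then $\pi_{m-1}(B^2 SL_1 R) \cong \pi_{m-3}R = 0$ because $m-3$ is odd; the same parity count kills the relevant $\lim^1$-terms, so every compatible system of maps on skeleta extends. A homotopy-commutative ring map $\Sigma^{\infty}_+ \Omega SU(n) \to R$ supplies exactly such compatible data (this identification is the bookkeeping step), hence lifts to an honest $\E_2$-ring map, giving the first assertion. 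For the ``moreover'' one instead runs the obstruction theory for $\E_2$-refinements of a fixed $\mathbb{A}_\infty$-map; by Dunn--Lurie additivity $\E_2 \simeq \E_1 \otimes \E_1$ (\cite{HA}) this amounts to building a second, compatible $\E_1$-structure on that morphism, and the obstruction groups are once more odd-degree subquotients assembled from the even homotopy of $R$ and the even homology of $\Omega SU(n)$, so they vanish.

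The step I expect to be the main obstacle is not any single computation but the combination of (i) pinning down precisely why the passage from $\E_1$ to $\E_2$ is governed by a \emph{single} extra delooping --- the additivity input that makes the even cells of $BSU(n)$ meet the even homotopy of $B^2 SL_1 R$ only through odd-degree obstruction groups --- and (ii) the bookkeeping translating ``homotopy-commutative ring map'' and ``chosen $\mathbb{A}_\infty$-lift'' into the correct input for these towers, together with the routine but essential convergence ($\lim$, $\lim^1$) checks. An alternative to the units reduction, closer to \cite{ChadwickMandell}, is to resolve $\Sigma^{\infty}_+\Omega SU(n)$ by free $\E_2$-algebras and run the associated Bousfield--Kan spectral sequence directly; there the vanishing $MU_{2\ast+1}(\Omega SU(n)) = 0$ enters through $\E_2$-topological Andr\'e--Quillen homology, the bookkeeping is organized differently, but the parity mechanism forcing the obstructions to vanish is the same.
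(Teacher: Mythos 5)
Your proposal takes a genuinely different route from the paper.  The paper invokes Chadwick--Mandell's obstruction theory directly: there is a diagram of principal fibrations relating the $\E_2$-ring and $\mathbb{A}_\infty$-ring mapping spaces through the Postnikov tower of $R$, with fibers of the form $\cS_*(BSU(n),K(\pi_q R,q+3))$ and $\cS_*(SU(n),K(\pi_q R,q+2))$; the existence obstruction then lives in $H^{q+3}(BSU(n);\pi_q R)$ (which vanishes for $q$ even since $BSU(n)$ has even cohomology), and the ``moreover'' follows from surjectivity of $H^{q+2}(BSU(n);\pi_q R)\to H^{q+1}(SU(n);\pi_q R)$.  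You instead deloop through the units spectrum and reduce to $\Map_{\cS_*}(BSU(n), B^2 SL_1 R)$; the same even-odd mechanism then manifests as even cells of $BSU(n)$ pairing with the even homotopy of $B^2 SL_1 R$.  Both strategies are reasonable and exploit the same parity, and the units reduction is a valid alternative for the underlying mapping-space identification.

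However, there are two real gaps. First, as you yourself flag, your skeletal obstruction theory only shows $\Map_{\cS_*}(BSU(n), B^2 SL_1 R)$ is nonempty, i.e., that \emph{some} $\E_2$-ring map exists.  The theorem asserts that any \emph{given} homotopy commutative ring map (resp.\ $\mathbb{A}_\infty$-map) lifts, and the translation of that given datum into compatible skeletal data on $BSU(n)$ is not a formality: a homotopy class of ring maps does not a priori live anywhere on the tower $\{BSU(n)^{(m)}\}$, and the difference groups $H^{m}(BSU(n);\pi_m(B^2 SL_1 R))$ sit in even degree and need not vanish, so the mapping space is not connected and you cannot appeal to uniqueness.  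In the paper's setup this issue is precisely what the Chadwick--Mandell fibration diagram is for.

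Second, the ``moreover'' step via Dunn additivity is not right as stated. $\E_2 \simeq \E_1\otimes\E_1$ does not reduce the problem of refining a given $\E_1$-map to an $\E_2$-map to ``choosing a second compatible $\E_1$-structure on that morphism''; that phrase has no precise meaning in this setting, and even morally it ignores the coherence between the two $\E_1$-structures.  In the units picture the correct statement of the relative problem is: the $\mathbb{A}_\infty$-lift gives a pointed map $SU(n)\to BSL_1 R$, equivalently $\Sigma SU(n)\to B^2 SL_1 R$, and one must extend it along $\Sigma SU(n)\to BSU(n)$.  That map is not a skeletal inclusion, so your skeletal obstruction theory does not apply directly; one needs either a compatible filtration of $BSU(n)$ adapted to $\Sigma SU(n)$, or a relative obstruction theory that ultimately reduces to exactly the surjectivity of $H^{*}(BSU(n))\to H^{*-1}(SU(n))$ that the paper uses.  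Without that, the ``moreover'' clause is not established.
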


\begin{proof} 
By taking connective covers, one learns that any ring homomorphism
$$\Sigma^{\infty}_+ \Omega SU(n) \rightarrow R$$
must factor through the natural $\mathbb{E}_2$-algebra map $\tau_{\ge 0} R \rightarrow R$.  Thus, without loss of generality we will assume that $R$ is $(-1)$-connected.

It is clear that the composite ring homomorphism
$$\Sigma^{\infty}_+ \Omega SU(n) \longrightarrow R \longrightarrow \tau_{\le 0} R \simeq H\pi_0(R)$$
may be lifted to an $\mathbb{E}_2$-ring homomorphism factoring through $\tau_{\le 0} \Sigma^{\infty}_+ \Omega SU(n) \simeq H\mathbb{Z}$.   Suppose now for $q>0$ that we have chosen an $\mathbb{E}_2$-ring homomorphism 
$$\Sigma^{\infty}_+ \Omega SU(n) \longrightarrow \tau_{\le q-1} R$$
We will show that there is no obstruction to the existence of a further $\mathbb{E}_2$-lift $$\Sigma^{\infty}_+ \Omega SU(n) \longrightarrow \tau_{\le q} R,$$
and that one may be chosen lifting any specified $\mathbb{A}_\infty$ map $\Sigma^{\infty}_+ \Omega SU(n) \rightarrow \tau_{\le q} R$.

According to \cite[Theorem $4.1$]{ChadwickMandell}, there is a diagram of principal fibrations
$$
\begin{tikzcd}
\mathbb{E}_2\text{-Ring}(\Sigma^{\infty}_+ \Omega SU(n), \tau_{\le q} R) \arrow{r} \arrow{d} & \mathbb{A}_\infty\text{-Ring}(\Sigma^{\infty}_+ \Omega SU(n), \tau_{\le q} R) \arrow{d} \\
\mathbb{E}_2\text{-Ring}(\Sigma^{\infty}_+ \Omega SU(n), \tau_{\le q-1} R) \arrow{r} \arrow{d} & \mathbb{A}_\infty\text{-Ring}(\Sigma^{\infty}_+ \Omega SU(n), \tau_{\le q-1} R) \arrow{d} \\
\cS_*(BSU(n),K(\pi_q R,q+3)) \arrow{r} & \cS_*(SU(n),K(\pi_q R,q+2))
\end{tikzcd}
$$
For $q$ odd, $\tau_{\le q-1} R \simeq \tau_{\le q} R$, so there is no obstruction.  Let us therefore assume that $q$ is even.

Since the cohomology of $BSU(n)$ is even-concentrated with coefficients in any abelian group, we have that $\pi_0 \cS_*(BSU(n),K(\pi_q R,q+3)) \cong H^{q+3}(BSU(n);\pi_q R)$ is zero.  It follows then that the given class $$x \in \pi_0 \mathbb{E}_2\text{-Ring}(\Sigma^{\infty}_+ \Omega SU(n), \tau_{\le q-1} R)$$ admits some lift $$\widetilde{x} \in \mathbb{E}_2\text{-Ring}(\Sigma^{\infty}_+ \Omega SU(n), \tau_{\le q} R).$$  We may need to modify $\widetilde{x}$ to match our chosen $\mathbb{A}_\infty$-ring homomorphism.  This is always possible so long as the map
$$\pi_1(\cS_*(BSU(n),K(\pi_q R,q+3))) \longrightarrow \pi_1(\cS_*(SU(n),K(\pi_q R,q+2)))$$
is surjective.  Said in other terms, this is just the map
$$H^{2q+2}(BSU(n);\pi_q R) \longrightarrow H^{2q+1}(SU(n);\pi_q R) \cong H^{2q+2}(\Sigma SU(n);\pi_q R)$$
induced by the natural map $\Sigma SU(n) \rightarrow BSU(n)$.  It is a classical fact that this map is surjective (it follows from a calculation with the bar spectral sequence, using the fact that the cohomology of $SU(n)$ is exterior).
\end{proof}

\section{Obstructions to a General \texorpdfstring{$\mathbb{E}_2$}{E2} Splitting} \label{sec:Obstruction}

Let $3< n\leq \infty$ be an integer.  The $\mathbb{A}_{\infty}$ filtered equivalence of Theorem \ref{thm:MainAoo} gives an equivalence of $\mathbb{A}_\infty$ ring spectra  $$\Sigma^{\infty}_+ \Omega SU(n) \simeq gr(\Sigma^{\infty}_+ \{ F_{n,k} \}).$$  The right-hand side is the associated graded of the stable Bott filtration $\Sigma^{\infty}_+ \{ F_{n,k} \}$, which we showed is $\mathbb{A}_\infty$ in Theorem \ref{thm:BottIsAoo}, but which is not known to be $\E_2$ (see Question \ref{qst:BottE2}).

In this section, we show that the graded spectrum on the right-hand side cannot be given a graded $\E_2$ structure which makes the above equivalence $\E_2$ on underlying ring spectra.  This proves Theorem \ref{thm:MainObstruction}, and in particular says that even if the Bott filtration is $\E_2$, it will not be $\E_2$-split before smashing with $MU$. 

The proof is via a power operation computation.  In particular, the $\mathbb{A}_\infty$ splitting map takes the stabilization of the bottom cell $\beta_l : S^2 \to  \CP^{n-1} \to \Omega SU(n)$ on the left-hand side to the stabilization of the bottom cell $\beta_r: S^2 \to F_{n,1} \simeq \CP^{n-1}$ on the right-hand side.  We construct a power operation $\nu^{s}$ and show that $\nu^{s}(\Sigma^{\infty} \beta_l) \neq \nu^s(\Sigma^{\infty} \beta_r).$ 

\begin{obs}Let $Y\in \Alg_{\E_2}(\cS)$, and suppose we are given a map $S^2\to Y$.  This extends to an $\E_2$ map $\Omega^2 S^4 \to Y.$  We may precompose with the map $h: S^5 \to \Omega^2 S^4$ adjoint to the Hopf map $S^7\to S^4$.  This procedure determines a natural operation $$\nu^u: \pi_2(Y) \to \pi_5(Y)$$ in the homotopy of any $\E_2$-algebra in spaces.  

Correspondingly, for any $X\in \Alg_{\E_2}(\Sp)$, a class in $\pi_2(X)$ determines an $\E_2$ map $\Sigma^{\infty}_+ \Omega^2 S^4 \to X$.  The above map $h$ then determines an operation $\nu^s :\pi_2(X) \to \pi_5(X)$ via precomposition.  This has the property that for $Y\in \Alg_{\E_2}(\cS)$ and $\beta \in \pi_2(Y)$, we have $\nu^s(\Sigma^{\infty} \beta) = \Sigma^{\infty} \nu^u (\beta).$
\end{obs}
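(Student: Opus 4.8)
The statement is essentially formal, and the plan is to derive it from the universal property of free $\mathbb{E}_2$-algebras together with the symmetric monoidality of $\Sigma^{\infty}_+$. First I would recall May's approximation theorem \cite{May}: $\Omega^2 S^4 \simeq \Omega^2\Sigma^2 S^2$ is the free $\mathbb{E}_2$-algebra on the pointed space $S^2$, so that for an $\mathbb{E}_2$-algebra $Y$ in spaces (pointed by its unit) restriction along the canonical map $S^2 \to \Omega^2 S^4$ is an equivalence $\Map_{\Alg_{\mathbb{E}_2}}(\Omega^2 S^4, Y) \xrightarrow{\ \sim\ } \Map_*(S^2, Y)$. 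Thus a class $\beta \in \pi_2(Y)$ has an essentially unique $\mathbb{E}_2$-extension $\widetilde\beta\colon \Omega^2 S^4 \to Y$, and $\nu^u(\beta) := [\widetilde\beta\circ h] \in \pi_5(Y)$ is well defined; naturality of $\nu^u$ for $\mathbb{E}_2$-maps $g\colon Y\to Y'$ follows at once, since $g\circ\widetilde\beta$ restricts along the unit to $g\circ\beta$ and hence coincides with $\widetilde{g\circ\beta}$ by the equivalence above.

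For the stable operation I would apply $\Sigma^{\infty}_+$ — or, equivalently, invoke the stable splitting of the little-disks approximation — to identify $\Sigma^{\infty}_+\Omega^2 S^4$ with the free $\mathbb{E}_2$-algebra in spectra on $\mathbb{S}^2$. The same recipe then applies verbatim: a class $\alpha\in\pi_2(X)$ for $X\in\Alg_{\mathbb{E}_2}(\Sp)$ has an essentially unique $\mathbb{E}_2$-extension $\widetilde\alpha\colon \Sigma^{\infty}_+\Omega^2 S^4 \to X$, and $\nu^s(\alpha) := [\widetilde\alpha\circ h^s]\in\pi_5(X)$, where $h^s\colon \mathbb{S}^5\to\Sigma^{\infty}_+\Omega^2 S^4$ is the stabilization of $h$; naturality is proved exactly as before.

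Finally, for the identity $\nu^s(\Sigma^{\infty}\beta) = \Sigma^{\infty}\nu^u(\beta)$ the key point is that $\Sigma^{\infty}_+\colon \Alg_{\mathbb{E}_2}(\mathcal{S})\to\Alg_{\mathbb{E}_2}(\Sp)$ is symmetric monoidal, hence carries the free $\mathbb{E}_2$-algebra on $S^2$ to the free $\mathbb{E}_2$-algebra on $\mathbb{S}^2$ compatibly with the generators. So, given $\beta\in\pi_2(Y)$, the $\mathbb{E}_2$-map $\Sigma^{\infty}_+\widetilde\beta$ restricts along the generator to $\Sigma^{\infty}\beta$ and therefore agrees with $\widetilde{\Sigma^{\infty}\beta}$ by the universal property; since $h^s$ is obtained by applying $\Sigma^{\infty}_+$ to $h$, functoriality gives $\nu^s(\Sigma^{\infty}\beta) = \widetilde{\Sigma^{\infty}\beta}\circ h^s = \Sigma^{\infty}_+(\widetilde\beta\circ h) = \Sigma^{\infty}\nu^u(\beta)$. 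The whole argument is bookkeeping with universal properties, so there is no real obstacle; the one point demanding care is the identification of $\Sigma^{\infty}_+\Omega^2 S^4$ as the free $\mathbb{E}_2$-algebra in spectra on $\mathbb{S}^2$ — keeping straight the distinction between $\Sigma^{\infty}$ and $\Sigma^{\infty}_+$ and the empty-configuration (unit) wedge summand — and normalizing $h$ and $h^s$ consistently so the final string of identities holds literally on homotopy classes.
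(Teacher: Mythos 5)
Your argument is correct and is exactly the (implicit) justification the paper has in mind: the Observation is stated without proof, resting on May's identification of $\Omega^2\Sigma^2 S^2$ as the free $\mathbb{E}_2$-algebra on $S^2$, the Snaith-type identification of $\Sigma^\infty_+\Omega^2 S^4$ as the free $\mathbb{E}_2$-ring spectrum on $\mathbb{S}^2$ (cf.\ the paper's Example on structured Snaith splittings), and the compatibility of $\Sigma^\infty_+$ with free algebras. Your flagged care points (unit summand, $\Sigma^\infty$ vs.\ $\Sigma^\infty_+$, normalization of $h$) are precisely the right ones, and the bookkeeping goes through as you describe.
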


\begin{rmk} \label{rmk:multnu}
The notation is meant to hint at the fact that if $Y = \Omega^\infty X$ comes from a spectrum, then the operation $\nu^u$ is given by multiplication by the element $\nu \in \pi_3(\mathbb{S})$ from the homotopy groups of the sphere spectrum.  Thus, $\nu^u$ is an unstable version of $\nu$ that is already seen in any $\E_2$ algebra in spaces.    
\end{rmk}

We now compute the operation $\nu^s$ on $\Sigma^{\infty} \beta_l$ and $\Sigma^{\infty} \beta_r$.  

\textbf{Computing $\nu^s(\Sigma^{\infty}\beta_l)$:}

For $n>3$, observe that the natural map $\Omega SU(n) \to BU$ is an isomorphism in homology up to degree $7$.  This implies that $\pi_5(\Omega SU(n)) \simeq \pi_5(BU) \cong 0$.  Consequently, $\nu^u(\beta_l) = 0$ and so $\nu^s(\Sigma^{\infty} \beta_l) = 0.$  

\textbf{Computing $\nu^s(\Sigma^{\infty}\beta_r)$:}

For $\beta_r$, we use the assumption that $gr(\Sigma^{\infty}_+ \{ F_{n,k} \})$ is an $\E_2$ graded spectrum.  The map $\beta_r: S^2 \to F_{n,1}$ extends to an $\E_2$ map of underlying $\mathbb{E}_2$-algebras $$\Sigma^{\infty}_+ \Omega^2 S^4 \to  gr(\Sigma^{\infty}_+ \{ F_{n,k} \}).$$  Since $\Sigma^{\infty} \beta_r$ hits the degree 1 piece, this in fact arises as part of a diagram of \emph{graded} spectra:

$$
\begin{tikzcd}
 F_{\E_2}(\Sigma^{\infty}S^2[1])\arrow[rr,"F_{\E_2}(\Sigma^{\infty} \beta_r)"]& & F_{\E_2}(gr(\Sigma^{\infty}_+ \{ F_{n,k}\} ))\arrow[d]\\
 \Sigma^{\infty}S^2[1] \arrow[rr] \arrow[u]&& gr(\Sigma^{\infty}_+ \{ F_{n,k} \} ), \\ 
\end{tikzcd}
$$

where the free algebras are taken in graded spectra (see Example \ref{exm:snaith}).  The relevant map for us is the $\E_2$ map of graded spectra $$F_{\E_2}(\Sigma^\infty S^2[1]) \to gr(\Sigma^{\infty}_+ \{ F_{n,k} \})$$ in that diagram.  In particular, we aim to show that $\nu^s(\Sigma^\infty \beta_r)$ is nonzero. It suffices to see this in grading $1$; there, it is given by the composite $$\Sigma^{\infty} S^5 \to \Sigma^{\infty}_+ \Omega^2 S^4 \to \Sigma^{\infty} S^2 \xrightarrow{\Sigma^{\infty} \beta_r} \Sigma^{\infty} F_{n,1} =\Sigma^{\infty} \CP^{n-1},$$ where the middle map is given by projection onto the first graded piece (i.e., the map from the Snaith splitting).  It is easy to see that the first composite $\Sigma^{\infty} S^5 \to \Sigma^{\infty} S^2$ is simply $\nu \in \pi_3(\mathbb{S}).$  Therefore, the whole composite is given by the product $\nu\cdot (\Sigma^{\infty} \beta_r).$  
However, it was computed in \cite[Theorem II.8]{Liulevicius} that $\pi_5(\Sigma^{\infty}\CP^{\infty})^{\wedge}_2=\mathbb{Z}/2$ generated by $\nu \cdot (\Sigma^{\infty}\beta_r).$  Moreover, the natural map $\Sigma^{\infty}\CP^{n-1} \to \Sigma^{\infty}\CP^\infty$ is an isomorphism on $\pi_5$ for $n>3$.  We conclude that $\nu \cdot (\Sigma^{\infty}\beta_r )\neq 0$ and thus $\nu^s(\Sigma^{\infty} \beta_r) \neq 0$.  This contradicts the existence of an $\E_2$ splitting.

\begin{rmk}
Taking the limit as $n\to\infty$, we see from the above computations that the $\E_2$ power operations on the bottom cells of $BU$ and $Q\CP^{\infty}$ do not agree.  The bottom of the Weiss tower for the functor $V \mapsto BU(V)$ gives a well-known loop map $s:BU \to Q\CP^{\infty}$, implementing the splitting principle.  The obstruction of this section recovers the classical fact that $s$ is not a double loop map.
\end{rmk}

\appendix

\section{Further Properties of Day Convolution}\label{app:day}

\label{sect:monoidal2}

Here we discuss some additional constructions and results that we will need for the more technical parts of this paper.  

The monoidal structures on our categories will arise from Day convolution.  This was studied for $\infty$-categories by Glasman \cite{Glasman} and Lurie \cite{LurieRot, HA} at varying levels of generality.  We will find it convenient to use the formulation from Section 2.2.6 of \cite{HA}.  

\begin{thm}[\cite{HA}, Example 2.2.6.9]
Let $\C$ and $\D$ be symmetric monoidal $\infty$-categories.  Then there is an $\infty$-operad $\Fun(\C, \D)^{\otimes} $ with the following properties:
\begin{enumerate}
\item The underlying $\infty$-category of $\Fun(\C,\D)^{\otimes}$ is the functor category $\Fun(\C, \D)$.
\item The $\infty$-category $\Alg_{\E_\infty}(\Fun(\C, \D)^{\otimes})$ of $\E_\infty$ algebras in $\Fun(\C,\D)^{\otimes}$ is equivalent to the category of lax symmetric monoidal functors from $\C$ to $\D$.  

\end{enumerate}
\end{thm}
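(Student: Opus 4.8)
The plan is to recover both assertions from Lurie's construction of the Day convolution $\infty$-operad. Write $p\colon\C^{\otimes}\to\Fin_*$ and $q\colon\D^{\otimes}\to\Fin_*$ for the given symmetric monoidal $\infty$-categories, viewed as $\infty$-operads. First I would build $\Fun(\C,\D)^{\otimes}$ as a simplicial set over $\Fin_*$ via a relative universal property: the slice category $(\mathrm{Set}_{\Delta})_{/\Fin_*}$ is cartesian closed (it is a slice of a presheaf topos), so the endofunctor $K\mapsto K\times_{\Fin_*}\C^{\otimes}$ admits a right adjoint, and I take $\widetilde{\Fun}(\C,\D)^{\otimes}\to\Fin_*$ to be its value on $q\colon\D^{\otimes}\to\Fin_*$; thus maps $K\to\widetilde{\Fun}(\C,\D)^{\otimes}$ over $\Fin_*$ correspond naturally to maps $K\times_{\Fin_*}\C^{\otimes}\to\D^{\otimes}$ over $\Fin_*$. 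Then I would let $\Fun(\C,\D)^{\otimes}\subseteq\widetilde{\Fun}(\C,\D)^{\otimes}$ be the full subcategory on those objects lying over some $\langle n\rangle$ whose associated functor $\C^{\otimes}_{\langle n\rangle}\to\D^{\otimes}$ is \emph{inert-compatible}: it carries a $p$-coCartesian arrow over each inert $\rho^{i}\colon\langle n\rangle\to\langle 1\rangle$ to a $q$-coCartesian arrow. Unwinding this, an object of $\Fun(\C,\D)^{\otimes}$ over $\langle n\rangle$ is an $n$-tuple $(F_1,\dots,F_n)$ of functors $\C\to\D$, and a morphism lying over $\alpha\colon\langle m\rangle\to\langle n\rangle$ is a coherent family, indexed by the fibers of $\alpha$, of natural transformations $\bigotimes_{i\in\alpha^{-1}(j)}F_i(c_i)\to G_j\bigl(\bigotimes_{i\in\alpha^{-1}(j)}c_i\bigr)$ --- exactly the data one expects of a Day-convolution operad.

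Next I would verify the three axioms for an $\infty$-operad \cite[Definition 2.1.1.10]{HA}. That $\Fun(\C,\D)^{\otimes}\to\Fin_*$ is an inner fibration follows by adjunction from the defining mapping property, using that $\C^{\otimes}\to\Fin_*$ is a flat inner fibration (every symmetric monoidal $\infty$-category is coCartesian over $\Fin_*$, hence flat), so that $-\times_{\Fin_*}\C^{\otimes}$ carries inner-anodyne maps to inner-anodyne maps, together with the fact that $\D^{\otimes}$ is an $\infty$-category; passing to the full subcategory $\Fun(\C,\D)^{\otimes}$ preserves this. For coCartesian lifts of inert morphisms, I would transport, component by component, the coCartesian lifts already present in $\C^{\otimes}$ and $\D^{\otimes}$; the inert-compatibility condition is precisely what guarantees that the lift again lies in $\Fun(\C,\D)^{\otimes}$ and enjoys the correct universal property. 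The Segal condition $\Fun(\C,\D)^{\otimes}_{\langle n\rangle}\xrightarrow{\ \sim\ }\prod_{i=1}^{n}\Fun(\C,\D)^{\otimes}_{\langle 1\rangle}$ is then immediate from the full-subcategory condition. Since $\C^{\otimes}_{\langle 1\rangle}=\C$ and $\D^{\otimes}_{\langle 1\rangle}=\D$, the mapping property identifies the $\langle 1\rangle$-fiber with $\Fun(\C,\D)$, which gives part (1).

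For part (2), an $\E_\infty$-algebra in $\Fun(\C,\D)^{\otimes}$ is a section $s\colon\Fin_*\to\Fun(\C,\D)^{\otimes}$ of the structure map carrying inert morphisms to coCartesian morphisms. Taking $K=\Fin_*$ in the defining mapping property, the bare datum of such a section is a functor $\C^{\otimes}\to\D^{\otimes}$ over $\Fin_*$; I would then check that the condition ``$s$ sends inerts to coCartesian arrows'' translates exactly into ``this functor sends inert morphisms to inert morphisms,'' i.e.\ that it is a morphism of $\infty$-operads --- equivalently, a lax symmetric monoidal functor $\C\to\D$. Running the same translation with $K=\Fin_*\times\Delta^{n}$ for all $n$ upgrades this to an equivalence of $\infty$-categories between $\Alg_{\E_\infty}(\Fun(\C,\D)^{\otimes})$ and the $\infty$-category of lax symmetric monoidal functors, with morphisms the monoidal natural transformations.

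The main obstacle --- and where essentially all of the content lies --- is calibrating the inert-compatibility condition defining $\Fun(\C,\D)^{\otimes}$ so that the three axioms hold simultaneously: it must be loose enough that the structure map stays an inner fibration and that coCartesian lifts of inerts survive restriction to the subcategory, yet tight enough to force the Segal condition and to cut the naive $\langle n\rangle$-fiber $\Fun(\C^{\times n},\D^{\times n})$ down to $\Fun(\C,\D)^{\times n}$. Concretely, the one genuinely non-formal step is establishing the existence and functoriality of coCartesian lifts of inert morphisms inside $\Fun(\C,\D)^{\otimes}$; once that is secured, the remaining verifications are diagram chases with the defining mapping property. (Promoting $\Fun(\C,\D)^{\otimes}$ to a genuine symmetric monoidal $\infty$-category, as one needs for $\Gr(\Sp)$ and $\Fil(\Sp)$, requires the further input that the relevant Day-convolution colimits exist in $\D$ --- automatic when $\D$ is presentable --- but this is not needed for the statement at hand.)
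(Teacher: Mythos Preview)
The paper does not prove this statement: it is recorded as a citation of \cite[Example~2.2.6.9]{HA} and used as black-box input for the Day convolution machinery in Appendix~\ref{app:day}. Your sketch is essentially an outline of Lurie's own construction in that reference (the norm/Day-convolution $\infty$-operad built via the mapping property for $K\times_{\Fin_*}\C^{\otimes}\to\D^{\otimes}$ over $\Fin_*$), so there is nothing to compare against in the paper itself; your proposal is consistent with the cited source and would be an acceptable expansion of it.
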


In order for the $\infty$-operad $\Fun(\C,\D)^{\otimes}$ to actually be a symmetric monoidal $\infty$-category, one needs to make additional assumptions.  

\begin{prop}[\cite{HA}, Proposition 2.2.6.16]\label{prop:dayconvsmc}
Let $\C$ and $\D$ be symmetric monoidal $\infty$-categories.  Suppose that $\kappa$ is an uncountable regular cardinal such that:
\begin{enumerate}
\item $\C$ is essentially $\kappa$-small.
\item $\D$ admits $\kappa$-small colimits.
\item The tensor product on $\D$ preserves $\kappa$-small colimits separately in each variable.  
\end{enumerate}
Then $\Fun(\C,\D)^{\otimes}$ is a symmetric monoidal $\infty$-category.  
\end{prop}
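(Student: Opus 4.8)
The plan is to prove this by showing directly that the structure map $q\colon \Fun(\C,\D)^{\otimes}\to\N(\Fin_*)$ is a coCartesian fibration, which is precisely what it means for the $\infty$-operad $\Fun(\C,\D)^{\otimes}$ to be a symmetric monoidal $\infty$-category. We already know that $q$ is a fibration of $\infty$-operads with underlying category $\Fun(\C,\D)$, so the inert morphisms of $\N(\Fin_*)$ admit $q$-coCartesian lifts and the fibers are the expected powers of $\Fun(\C,\D)$. Since every morphism of $\N(\Fin_*)$ factors as an inert followed by an active morphism, and every active morphism is assembled from the elementary active maps $\mu_n\colon\langle n\rangle\to\langle 1\rangle$ (including $n=0$) via the symmetric monoidal structure of $\Fin_*$, it suffices to (a) produce a coCartesian lift of each $\mu_n$ with arbitrary source, and (b) verify that the resulting lifts compose correctly, so that $q$ is a coCartesian fibration and not merely an $\infty$-operad with enough objects.

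For step (a): a coCartesian lift of $\mu_n$ with source $(F_1,\dots,F_n)$ amounts to an object $F_1\otimes\cdots\otimes F_n\in\Fun(\C,\D)$ corepresenting the multi-mapping functor $H\mapsto\mathrm{Mul}_{\Fun(\C,\D)^{\otimes}}(\{F_1,\dots,F_n\},H)$. First I would unwind, from the model of $\Fun(\C,\D)^{\otimes}$ as a subcategory of $\Fun_{\N(\Fin_*)}(\C^{\otimes},\D^{\otimes})$, that this space of multi-morphisms is the space of natural transformations from the external product $F_1\boxtimes\cdots\boxtimes F_n\colon\C^{\times n}\to\D$, $(c_1,\dots,c_n)\mapsto F_1(c_1)\otimes_{\D}\cdots\otimes_{\D}F_n(c_n)$, to the restriction of $H$ along $\otimes_{\C}^{(n)}\colon\C^{\times n}\to\C$. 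By the universal property of left Kan extension it is therefore corepresented by $F_1\otimes\cdots\otimes F_n:=\mathrm{Lan}_{\otimes_{\C}^{(n)}}(F_1\boxtimes\cdots\boxtimes F_n)$, provided this Kan extension exists; its value at $c\in\C$ is a colimit indexed by the comma $\infty$-category $\C^{\times n}\times_{\C}\C_{/c}$, which hypothesis (1) renders essentially $\kappa$-small and which hypothesis (2) then makes convergent in $\D$. The case $n=0$ produces the unit, namely the copower of $\mathbf{1}_{\D}$ by the $\kappa$-small space $\Map_{\C}(\mathbf{1}_{\C},-)$.

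For step (b) the content is that the construction in (a) is associative already at the level of lifts: composing the coCartesian lifts along $\langle 3\rangle\to\langle 2\rangle\to\langle 1\rangle$ yields $(F_1\otimes F_2)\otimes F_3=\mathrm{Lan}_{\otimes_{\C}}\!\big(\mathrm{Lan}_{\otimes_{\C}}(F_1\boxtimes F_2)\boxtimes F_3\big)$, and one must identify this with the direct lift $\mathrm{Lan}_{\otimes_{\C}^{(3)}}(F_1\boxtimes F_2\boxtimes F_3)$. This is exactly where hypothesis (3) enters: forming $(-)\boxtimes F_3$ commutes with the $\kappa$-small colimit computing the inner Kan extension precisely because $(-)\otimes_{\D}F_3(c')$ preserves $\kappa$-small colimits, and symmetrically in the other variable, after which a cofinality argument collapses the iterated colimit to the single colimit over $\C^{\times 3}\times_{\C}\C_{/c}$; the manifest symmetry of $\mathrm{Lan}_{\otimes_{\C}^{(n)}}$ under permuting the $F_i$ then supplies commutativity. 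Granting this, a standard transitivity/pasting argument for coCartesian edges upgrades the pointwise existence of lifts to the assertion that $q$ is a coCartesian fibration. I expect the main obstacle to be precisely the bookkeeping in step (a) — matching the abstract multi-mapping spaces in Lurie's model with the concrete description via external products and keeping all the coherences straight — together with the interchange-of-colimits diagram chase in step (b) that makes the iterated Kan extensions associative; the reductions involving $\Fin_*$ and the pasting lemma for coCartesian edges are formal.
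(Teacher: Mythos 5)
The paper does not prove this proposition itself: it is quoted verbatim from \cite{HA} (Proposition 2.2.6.16), accompanied only by a two-sentence gloss that hypotheses (1)--(2) guarantee the existence of the pointwise left Kan extensions defining the convolution product, while hypothesis (3) makes those Kan extensions compatible with composition/associativity by letting the tensor product of $\D$ commute with the relevant $\kappa$-small colimits. Your outline is a correct expansion of exactly that argument (and of Lurie's proof), so it takes essentially the same approach as the paper.
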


Recall that the Day convolution is defined classically via left Kan extension.  Assumptions (1) and (2) ensure that the relevant Kan extensions exist.  Assumption (3) then ensures that the multiplication is associative by allowing the colimits taken in the formula for left Kan extension to commute with the tensor product.  

As stated before, Proposition \ref{prop:dayconvsmc} is sufficient to construct symmetric monoidal $\infty$-categories $\Fil(\Sp)$ and $\Gr(\Sp)$.  However, we wish to understand the interaction of the Weiss calculus with multiplicative structure; there, the filtrations go the other way.

We would like to make $\Cofil(\Sp)$ a symmetric monoidal $\infty$-category by putting the Day convolution on its opposite, $\Fun(\Z_{\geq 0}, \Sp^{op}).$  However, the smash product of spectra does not preserve small colimits separately in each variable.  Nevertheless, it does preserve \emph{finite} colimits separately in each variable.  In fact, these are the only colimits that are needed in the case at hand and so we have the following variant of Proposition \ref{prop:dayconvsmc}:

\begin{var}\label{var:day}
Let $\C$ and $\D$ be symmetric monoidal $\infty$-categories.  Suppose that:
\begin{enumerate}
\item Let $I$ be a nonempty finite set and consider the multiplication map $\Pi_{i\in I} \C \to \C$.  For every $C\in \C$, the slice category $\Pi_{i\in I}\C \times_{\C} \C_{/C}$ has a finite cofinal subcategory.  
\item $\D$ admits finite colimits. 
\item The tensor product on $\D$ preserves finite colimits separately in each variable.  
\end{enumerate}
Then $\Fun(\C, \D)^{\otimes}$ is a symmetric monoidal $\infty$-category.  
\end{var}
\begin{proof}
This follows directly from the same arguments as Proposition \ref{prop:dayconvsmc}.  In \cite[Corollary 2.2.6.14]{HA}, the assumptions are used to guarantee the existence of a left Kan extension; this again exists by assumptions (1) and (2) and \cite[Lemma 4.3.2.13]{HTT}.  Similarly, the proof of \cite[Proposition 2.2.6.16]{HA} only makes reference to commuting tensor products in $\D$ with finite colimits, which is ensured by assumption (3).  
\end{proof}

In Section \ref{app:SplittingMachine}, we will need to consider not only the Day convolution monoidal structure on $\Fun(\C,\D)$ but its functoriality as $\C$ varies.  For instance, we would for symmetric monoidal functors $\C_1 \to \C_2$ to induce symmetric monoidal functors $\Fun(\C_1,\D) \to \Fun(\C_2,\D)$ via left Kan extension.  

We give a very close variant of \cite[Corollary 3.8]{Nikolaus} in our current framework:

\begin{prop}\label{prop:kanmonoidal}
Let $\C_1$, $\C_2$, and $\D$ be symmetric monoidal $\infty$-categories and let $f:\C_1 \to \C_2$ be a symmetric monoidal functor.  Suppose that one of the following conditions hold:
\begin{enumerate}
\item The pairs $(\C_1, \D)$ and $(\C_2, \D)$ satisfy the hypotheses of Proposition \ref{prop:dayconvsmc}
\item The pairs $(\C_1, \D)$ and $(\C_2, \D)$ satisfy the hypotheses of Variant \ref{var:day} and for any object $c\in \C_2$, the slice category $\C_1 \times_{\C_2} {\C_2}_{/c}$ has a finite cofinal subset.  
\end{enumerate}
   Then there is an adjunction 
$$ f_! : \Fun(\C_1, \D) \xrightleftharpoons{\quad} \Fun(\C_2, \D): f_* $$ 
where $f_*$ denotes restriction and $f_!$ denotes left Kan extension.  Moreover, the functor $f_*$ is lax symmetric monoidal and $f_!$ is symmetric monoidal.  
\end{prop}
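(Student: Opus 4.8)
The plan is to prove the statement in three steps: establish the adjunction $f_! \dashv f_*$ at the level of underlying $\infty$-categories; upgrade the restriction functor $f_*$ to a lax symmetric monoidal functor using the functoriality of the Day convolution in its source variable; and then deduce the (strong) symmetric monoidality of $f_!$ formally, as the mate of $f_*$, by checking pointwise in $\C_2$ that the resulting oplax structure maps are equivalences.

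First I would record that the pointwise left Kan extension $f_!$ exists. Under hypothesis (1) this is immediate, as $\C_1$ is essentially $\kappa$-small and $\D$ admits $\kappa$-small colimits; under hypothesis (2) the value $f_!F(c)$ is a colimit indexed by $\C_1 \times_{\C_2} (\C_2)_{/c}$, which has a finite cofinal subcategory by assumption, so the colimit exists because $\D$ has finite colimits --- here one invokes \cite[Lemma 4.3.2.13]{HTT}. Either way, $f_! \dashv f_*$ is an adjunction of $\infty$-categories by the usual left Kan extension adjunction.

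The crux of the argument --- and the step I expect to require the most care --- is that $f_*$ is lax symmetric monoidal. The mechanism is that the assignment $\C \mapsto \Fun(\C, \D)^{\otimes}$ of \cite[\S 2.2.6]{HA} is functorial in the source: a symmetric monoidal functor $f \colon \C_1 \to \C_2$ induces a map of $\infty$-operads $\Fun(\C_2, \D)^{\otimes} \to \Fun(\C_1, \D)^{\otimes}$ lying over the identity on $\mathrm{Fin}_{*}$, whose underlying functor on $\infty$-categories is $f_*$ (precomposition with $f$). Since, under either set of hypotheses, both of $\Fun(\C_1, \D)^{\otimes}$ and $\Fun(\C_2, \D)^{\otimes}$ are genuine symmetric monoidal $\infty$-categories --- by Proposition \ref{prop:dayconvsmc}, respectively Variant \ref{var:day} --- such a map of $\infty$-operads is exactly the datum of a lax symmetric monoidal functor. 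Concretely, writing $F \boxtimes G \colon \C_i \times \C_i \to \D$ for the external product $(c, c') \mapsto F(c) \otimes_{\D} G(c')$ and recalling that Day convolution is $F \otimes G \simeq \mathrm{Lan}_{\otimes_{\C_i}}(F \boxtimes G)$, the equivalences $(f_*F) \boxtimes (f_*G) \simeq (f \times f)_{*}(F \boxtimes G)$ and $f \circ \otimes_{\C_1} \simeq \otimes_{\C_2} \circ (f \times f)$ produce the lax structure map $(f_*F) \otimes (f_*G) \to f_*(F \otimes G)$ as the canonical comparison between ``left Kan extension then restriction'' and ``restriction then left Kan extension'' attached to the commuting square relating $\otimes_{\C_1}$, $\otimes_{\C_2}$ and $f$; similarly one obtains the unit map. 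The map of $\infty$-operads is what records that these maps cohere. This is essentially \cite[Corollary 3.8]{Nikolaus} translated into the formalism of \cite[\S 2.2.6]{HA}, and in the writeup I would cite and lightly adapt that argument rather than redo the operadic bookkeeping from scratch.

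Finally, as a left adjoint of the lax symmetric monoidal functor $f_*$, the functor $f_!$ acquires a canonical oplax symmetric monoidal structure (a left adjoint of a lax symmetric monoidal functor is always oplax symmetric monoidal). It therefore remains to check that its two oplax structure maps --- $f_!(F \otimes G) \to f_!F \otimes f_!G$ and $f_!(\text{unit}) \to (\text{unit})$ --- are equivalences, which may be done pointwise in $\C_2$. For the unit, one uses the description of the monoidal unit of $\Fun(\C_i, \D)^{\otimes}$ as a left Kan extension built from $\mathbf{1}_{\C_i}$ and $\mathbf{1}_{\D}$, together with $f(\mathbf{1}_{\C_1}) \simeq \mathbf{1}_{\C_2}$. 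For the tensor product, transitivity of left Kan extension along $\C_1 \to \C_2$ and the relation $f \circ \otimes_{\C_1} \simeq \otimes_{\C_2} \circ (f \times f)$ identify $f_!(F \otimes G) = f_!\,\mathrm{Lan}_{\otimes_{\C_1}}(F \boxtimes G)$ with $\mathrm{Lan}_{\otimes_{\C_2}}\big((f \times f)_!(F \boxtimes G)\big)$, where $(f \times f)_!$ is left Kan extension along $f \times f$; and then $(f \times f)_!(F \boxtimes G) \simeq (f_!F) \boxtimes (f_!G)$ because the indexing category for the relevant colimit over $(c, c')$ splits as a product of slices and, crucially, $\otimes_{\D}$ preserves $\kappa$-small (respectively finite) colimits separately in each variable --- exactly the hypothesis powering Proposition \ref{prop:dayconvsmc} (respectively Variant \ref{var:day}). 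Combining these identifications gives $f_!(F \otimes G) \simeq f_!F \otimes f_!G$, so $f_!$ is symmetric monoidal, as desired.
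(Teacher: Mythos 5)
Your proposal is correct and follows essentially the same route as the paper: the paper likewise obtains the lax symmetric monoidal structure on $f_*$ from the universal property of the Day convolution operad, notes that the hypotheses guarantee the adjunction exists, and then defers the strong monoidality of $f_!$ to the argument of \cite[Corollary 3.8]{Nikolaus}, which is precisely the mate/oplax argument you spell out (transitivity of left Kan extension plus the colimit-preservation hypothesis on $\otimes_{\D}$). You have simply written out in full the step the paper cites verbatim.
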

\begin{proof}
The universal property of $\Fun(\C_1, \D)^{\otimes}$ immediately implies the existence of a map of $\infty$-operads $\Fun(\C_2,\D)^{\otimes} \to \Fun(\C_1,\D)^{\otimes}$, which makes $f_*$ a lax symmetric monoidal functor.  

Assumptions (1) and (2) of Proposition \ref{prop:dayconvsmc} guarantee that the adjunction exists at the level of $\infty$-categories.  The rest of the proof from \cite[Corollary 3.8]{Nikolaus} carries over verbatim.  
\end{proof}

\subsection{Proof of Lemma \ref{lem:cubes}}
Let $A_n\subset \Z^{op}_{\geq 0} \times \Z^{op}_{\geq 0} $ be the full subcategory spanned by pairs $(p,q)$ with $p+q \leq n$.  

Define a functor $T:\Z^{op}_{\geq 0} \times \Z^{op}_{\geq 0}  \to \Sp^{\J}$ by the formula $T(p,q) = X_p \wedge Y_q.$  We have by definition that $\lim T|_{A_n} \simeq (X\otimes Y)_{n}$.

Define the functor $\overline{T_n}: A_n \to \Sp^{\J}$ as the right Kan extension of $T|_{A_{n-1}}$ along the inclusion $A_{n-1} \to A_{n}.$   Then, $\overline{T_n}$ has the following properties:
\begin{enumerate}
\item $\lim \overline{T_n} = \lim T|_{A_{n-1}}.$
\item $\overline{T_n}|_{A_{n-1}} = T|_{A_{n-1}}.$
\item $\overline{T_n}(n,0) = X_{n-1} \wedge Y_0$ and $\overline{T_n}(0,n) = X_0\wedge Y_{n-1}.$
\item $\overline{T_n}(p,q) = X_{p-1}\wedge Y_{q} \times_{X_{p-1}\wedge Y_{q-1}} X_p\wedge Y_{q-1}$ for $p+q=n$, $p,q\geq 1$.  
\end{enumerate}

We may therefore compute $$\text{fib}\big(\lim_{A_n} T|_{A_n} \to \lim_{A_{n-1}} T|_{A_{n-1}}\big) = \text{fib}\big(\lim_{A_n} T|_{A_n} \to \lim_{A_n} \overline{T_n}\big) = \lim_{A_n} \text{fib}\big(T|_{A_n} \to \overline{T_n}\big).$$

The conclusion now follows immediately from the usual fact that $$\text{fib}(X_p\wedge Y_q \to X_{p-1}\wedge Y_{q} \times_{X_{p-1}\wedge Y_{q-1}} X_p\wedge Y_{q-1}) = \text{fib}(X_p\to X_{p-1}) \wedge \text{fib}(Y_q \to Y_{q-1}).$$


\section{Proof of Theorem \ref{thm:SplitMachine}}\label{app:SplittingMachine}



We will need a few preliminary definitions.  We start by fixing a positive integer $n$.  
Let $[n]$ denote the linearly ordered set of integers $0\leq i\leq n$. For any indexing 1-category $\mathcal{D}$, denote by $\mathcal{D}^{ds}$ the underlying discrete category, and denote by $\mathcal{D}^+$ the the category formed by formally adding a final object, which we will refer to as ``$+$''.  Define $\Fil_n^+ = \Fun([n]^+, \Sp)$ and $\Cofil_n^+ = \Fun(([n]^+)^{op},\Sp).$  These categories admit functors to $\Sp$ by restriction to the distinguished point.  We define $\C_n$ by the following pullback:

\begin{equation}\label{dia:fundpb}
\begin{tikzcd}
\C_n \arrow[r] \arrow[d]&  \Cofil_n^+ \arrow[d]\\
\Fil_n^+ \arrow[r]& \Sp
\end{tikzcd}
\end{equation}

An element of $\C_n$ can be thought of as a sequence of spectra connected by maps:
\begin{center}
$X_0 \longrightarrow X_1 \longrightarrow \cdots \longrightarrow X_n \longrightarrow X \simeq Y \longrightarrow Y_n \longrightarrow \cdots \longrightarrow Y_1 \longrightarrow Y_0$
\end{center}
where the middle arrow is an equivalence, as indicated.  This is equivalent to just considering sequences
\begin{center}
$X_0 \longrightarrow X_1 \longrightarrow \cdots \longrightarrow X_n \longrightarrow Z \longrightarrow Y_n \longrightarrow \cdots \longrightarrow Y_1 \longrightarrow Y_0$,
\end{center}
and so we shall refer to general elements by these names below.

Define the subcategory $\mathcal{G}_n\subset \C_n$ as the full subcategory such that for each integer $0\leq i\leq n$, the composite $X_i \longrightarrow Y_i$ is an equivalence.  

\begin{lem}
There is an equivalence $$\mathcal{G}_n \simeq \Gr_n^+ := \Fun(([n]^{+})^{ds}, \Sp).$$
\end{lem}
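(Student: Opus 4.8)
The plan is to prove the equivalence by induction on $n$, after replacing $\C_n$ with an honest functor category and recognizing $\mathcal{G}_n$ as a stable full subcategory cut out by invertibility conditions, and then ``peeling off'' one coordinate at a time.

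\emph{Setup.} As the discussion preceding the lemma makes clear, the pullback $\C_n = \Fil_n^+ \times_{\Sp} \Cofil_n^+$ is the functor category on the linear order $[2n+2]$ obtained by gluing $[n]^+$ and $([n]^+)^{op}$ along $+$ (so that the distinguished point $+$ sits at position $n+1$); thus $\C_n \simeq \Fun([2n+2],\Sp)$. Under this identification $\mathcal{G}_n$ is the full subcategory of diagrams $A\colon A_0 \to A_1 \to \cdots \to A_{2n+2}$ for which the ``antipodal'' structure map $a_i\colon A_i \to A_{2n+2-i}$ is an equivalence for every $0 \le i \le n$. These conditions are preserved by fibers, cofibers and retracts, so $\mathcal{G}_n \subset \Fun([2n+2],\Sp)$ is a stable subcategory; and since $\Gr_n^+ = \Fun(([n]^+)^{ds},\Sp) \simeq \Sp^{\,n+2}$, the assertion is that $\mathcal{G}_n \simeq \Sp^{\,n+2}$.

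\emph{Peeling off $A_0$.} The constant-diagram functor $\mathrm{const}\colon \Sp \to \mathcal{G}_n$ is simultaneously left adjoint to evaluation at the initial vertex $0$ and right adjoint to evaluation at the terminal vertex $2n+2$. The counit of the first adjunction is a natural map $\iota_A\colon \mathrm{const}_{A_0} \to A$ (at spot $i$ it is the structure map $A_0 \to A_i$); the unit of the second is a natural map $A \to \mathrm{const}_{A_{2n+2}}$, which we postcompose with $\mathrm{const}$ applied to the inverse of the equivalence $A_0 \xrightarrow{\sim} A_{2n+2}$ (available \emph{on $\mathcal{G}_n$}, as this is the $i=0$ condition) to obtain a natural map $\epsilon_A\colon A \to \mathrm{const}_{A_0}$. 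The poset composition law $a_i\circ(A_0\to A_i) = (A_0 \to A_{2n+2})$ gives $\epsilon_A\circ\iota_A \simeq \mathrm{id}$, so $\mathrm{const}_{A_0}$ is a natural retract of $A$, and stability of $\Fun([2n+2],\Sp)$ yields a natural splitting $A \simeq \mathrm{const}_{A_0}\oplus A'$ with $A' := \fib(\epsilon_A)$. Since $\epsilon_A$ is the identity in spot $0$, the summand $A'$ vanishes at vertex $0$, hence (by the $i=0$ condition) also at vertex $2n+2$. Checking that $A \mapsto (A_0, A')$ and $(P, A') \mapsto \mathrm{const}_P \oplus A'$ are inverse equivalences gives $\mathcal{G}_n \simeq \Sp \times \mathcal{G}_n^{0}$, where $\mathcal{G}_n^{0}\subset\mathcal{G}_n$ is the full subcategory of diagrams vanishing at both ends.

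\emph{Identifying the complement and inducting.} Writing $[2n+2] = [0]\star[2n]\star[0]$ and using that $0$ is a zero object of $\Sp$ (so $\Sp_{0/}\simeq\Sp\simeq\Sp_{/0}$), restriction to the central $[2n]$ identifies $\{A\colon[2n+2]\to\Sp : A_0\simeq 0\simeq A_{2n+2}\}$ with $\Fun([2n],\Sp)$; the substitution $k = i-1$ matches the surviving antipodal conditions ($1\le i\le n$) exactly with the defining conditions of $\mathcal{G}_{n-1}\subset\Fun([2n],\Sp)$, so $\mathcal{G}_n^{0}\simeq\mathcal{G}_{n-1}$. Combining the two steps gives $\mathcal{G}_n\simeq\Sp\times\mathcal{G}_{n-1}$; the same two steps applied to $\mathcal{G}_0$ give $\mathcal{G}_0\simeq\Sp\times\Fun([0],\Sp)=\Sp^{2}$, and the induction yields $\mathcal{G}_n\simeq\Sp^{\,n+2}\simeq\Gr_n^+$.

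\emph{The main obstacle.} Nothing here is deep; the work is in the coherence. One must verify that the adjunction-theoretic constructions of $\iota$, $\epsilon$ and of the splitting $A\simeq\mathrm{const}_{A_0}\oplus A'$ are natural in $A$, so that they assemble into equivalences of $\infty$-categories and not merely pointwise statements, and that the join-decomposition identifications together with the index shift in the inductive step reproduce the definition of $\mathcal{G}_{n-1}$ on the nose. The only non-formal ingredients are the stability of $\Sp$ (used to split off $\mathrm{const}_{A_0}$) and the fact that $0$ is a zero object (used to discard the end terms); everything else is formal manipulation of functor categories.
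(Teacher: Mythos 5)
Your proof is correct, and it reaches the equivalence by a genuinely different decomposition from the paper's, although both arguments run on the same engine: induction on the number of equivalence conditions, powered by splitting off a retract in a stable category. The paper peels from the \emph{inside}: it isolates the innermost condition $X_{k+1}\xrightarrow{\sim}Y_{k+1}$, identifies the middle segment $X_{k+1}\to Z\to Y_{k+1}$ with $\Gr_0^+$ exactly as in your base case (splitting $Z\simeq X_{k+1}\vee Y_{k+1}'$ off the retract), and obtains the recursion $\mathcal{G}_{k+1}\simeq\mathcal{G}_k\times_{\Sp}\Gr_0^+$ by commuting iterated fiber products of $\Fil_{k+1}$, $\Cofil_{k+1}$ and $\Gr_0^+$. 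You peel from the \emph{outside}: after first collapsing $\C_n$ to the single functor category $\Fun([2n+2],\Sp)$ on the glued linear order, you package the splitting via the Kan-extension adjunctions at the two endpoints together with the $i=0$ condition, arriving at the recursion $\mathcal{G}_n\simeq\Sp\times\mathcal{G}_{n-1}$. What your route buys is that the whole induction happens inside one functor category, the retraction is an honest adjunction (co)unit rather than an ad hoc construction, and fully faithfulness of $(P,A')\mapsto\mathrm{const}_P\oplus A'$ reduces to the adjunction computations $\Map(\mathrm{const}_P,B')\simeq\Map(P,B'_0)\simeq *$ and $\Map(A',\mathrm{const}_Q)\simeq\Map(A'_{2n+2},Q)\simeq *$; what the paper's route buys is that it more directly exhibits the explicit comparison functor $(I_n^+,I_n^{op,+})\colon\Gr_n^+\to\C_n$ that is used immediately after the lemma. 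This is not a real loss: unwinding your iterated splitting, the inverse equivalence sends $(P_0,\dots,P_n,P_+)$ to the diagram with $X_i\simeq Y_i\simeq\bigvee_{j\le i}P_j$ and $Z\simeq P_+\vee\bigvee_{j}P_j$, which is exactly the paper's explicit functor, so your argument supports the subsequent use of the lemma as well.
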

\begin{proof}
We proceed by induction on $n$.  

For $n=0$, we are considering the full subcategory of diagrams $X_0 \to Z \to Y_0$ of spectra with the property that the composite is an equivalence.  By taking the fiber of the second map, this is equivalent to the category of triples $(X_0, Y_0',Z)$ of spectra together with an equivalence $X_0 \vee Y_0' \xrightarrow{\sim} Z.$   This is  certainly equivalent to the category of pairs $(X_0, Y_0')$ of spectra, which is $\Gr_0^+.$  

Next, assume the statement for $n\leq k$ and consider $\mathcal{G}_{k+1}.$  We consider the auxiliary category $\overline{\mathcal{G}}_{k+1}$ which is the full subcategory of $\C_{k+1}$ where only $X_{k+1} \to Y_{k+1}$ is stipulated to be an equivalence.    The argument for the base case shows that $$\overline{\mathcal{G}}_{k+1} = \Fil_{k+1} \times_{\Sp} \Gr_0^+ \times_{\Sp} \Cofil_{k+1}$$ where the fiber products are over the restriction to $0\in [0]^+$ for $\Gr_0^+$, and over $X_{k+1}$ and $Y_{k+1}$ in the filtered and cofiltered spectra.  By commuting the fiber products, we find that $$\overline{\mathcal{G}}_{k+1} = (\Fil_{k+1} \times_{\Sp} \Cofil_{k+1}) \times_{\Sp} \Gr_0^+  \simeq \C_k \times_{\Sp} \Gr_0^+$$ where we have implicitly used the identifications $\Fil_{k+1} \simeq \Fil_k^+$ and $\Cofil_{k+1} \simeq \Cofil_k^+.$   Under this equivalence, the full subcategory $\mathcal{G}_{k+1} \subset \overline{\mathcal{G}}_{k+1}$ corresponds to $\mathcal{G}_k \times_{\Sp} \Gr_0^+ \simeq \Gr_{k+1}^+$ as desired.  

\end{proof}

In fact, the functor $\Gr_n^+ \to \C_n$ can be seen very explicitly as follows: there's a functor $$I_n^+: \Gr_n^+ \to \Fil_n^+$$ given by left Kan extension along the inclusion $([n]^+)^{ds} \to [n]^+$ which is completely analogous to the functor $I$ described in Section \ref{sec:FilGra}.  Dually, there's a functor $$I_n^{op,+}:\Gr_n^+ \to \Cofil_n^+$$ given by right Kan extension along the inclusion $([n]^+)^{ds} \to ([n]^+)^{op}$ which sends an element $(X_0, X_1, \cdots, X_n, X)\in \Gr_n^+$ to $$X_0 \longleftarrow X_0\vee X_1 \longleftarrow \cdots \longleftarrow \bigvee_i X_i \longleftarrow X \vee \bigvee_i X_i.$$  These functors agree on restriction to the distinguished object, and so they define the desired functor $\Gr_n^+ \to \C_n.$


Until this point, we have been working with a fixed $n$ and without regard to the monoidal structure.  The results of Appendix \ref{app:day} allow us to analyze what happens as $n$ varies.   

In particular, give $[n]^+$ the structure of a symmetric monoidal category by taking $\Z_{\geq 0}$ under addition and identifying all the integers $m >n$ with the point $+$.  By Proposition \ref{prop:dayconvsmc}, this gives $\Fil_n^+$ the structure of a symmetric monoidal $\infty$-category.  There are natural symmetric monoidal functors $\Z_{\geq 0}^+ \to [n+1]^+ \to [n]^+$ by successive quotient.  By Proposition \ref{prop:kanmonoidal} combined with the commutative diagram of symmetric monoidal functors

$$
\begin{tikzcd}
(\lbrack n+1\rbrack^+)^{ds} \arrow[r] \arrow[d] & \lbrack n+1\rbrack^+ \arrow[d]\\
(\lbrack n\rbrack^+)^{ds} \arrow[r] & \lbrack n\rbrack^+ ,
\end{tikzcd}
$$
we obtain a commutative diagram of symmetric monoidal $\infty$-categories:
$$
\begin{tikzcd}
\Gr_{n+1}^+ \arrow[r,"I_{n+1}^+"] \arrow[d] & \Fil_{n+1}^+ \arrow[d]\\
\Gr_n^+  \arrow[r,"I_n^+"] & \Fil_n^+ .
\end{tikzcd}
$$
We claim that the limit of the right vertical arrows over $n$ is $\Fil^+ := \Fun(\Z_{\geq 0}^+,\Sp)$ with its symmetric monoidal structure as given by Proposition \ref{prop:dayconvsmc}.  This is because by Proposition \ref{prop:kanmonoidal}, there are symmetric monoidal functors $\Fil^+ \to \Fil_n^+$ induced by the symmetric monoidal functor $\Z_{\geq 0}^+ \to [n]^+$ collapsing $+$ and all integers $m> n$ to a point.  This induces a symmetric monoidal functor $\Fil^+ \to \lim_n \Fil_n^{+}.$  It is then easy to check that this is an equivalence.  

There is a similar diagram for the cofiltered side by the appropriate analogs of the above statements:
$$
\begin{tikzcd}
\Gr_{n+1}^+ \arrow[r,"I_{n+1}^{op,+}"] \arrow[d] & \Cofil_{n+1}^+ \arrow[d]\\
\Gr_n^+  \arrow[r,"I_n^{op,+}"] & \Cofil_n^+ 
\end{tikzcd}
$$
and as before, we have an identification $ \Fun((\Z_{\geq 0}^{+})^{op}, \Sp) =: \Cofil^+ \simeq \lim_n \Cofil_n^+$ by Variant \ref{var:day} and Proposition \ref{prop:kanmonoidal}.

Thus, taking the limit in $n$ in the diagram (\ref{dia:fundpb}) yields a diagram of symmetric monoidal functors:

\begin{equation}\label{dia:fund}
\begin{tikzcd}
\mathcal{G}_\infty
 \arrow[drr, bend left, "I^{+,op}"]
  \arrow[ddr, bend right, "I^+"]
  \arrow[dr] & & \\
&\C_\infty \arrow[r] \arrow[d]&  \Cofil^+ \arrow[d]\\
& \Fil^+ \arrow[r]& \Sp
\end{tikzcd}
\end{equation}
where the square is Cartesian.

\begin{rmk}
While $\mathcal{G}_\infty \simeq \lim \Gr_n^{+}$, we warn the reader that $\mathcal{G}_\infty$ is not simply $\Fun((\Z_{\geq 0}^+)^{ds}, \Sp)$ because the maps in the inverse system for $\mathcal{G}_\infty$ are not just the ones induced by the inclusions $([n]^+)^{ds} \to ([n+1]^+)^{ds}.$
\end{rmk}

The functor $\mathcal{G}_\infty \to \mathcal{C}_\infty$ is fully faithful because it is the inverse limit of fully faithful functors.  Moreover, the essential image consists of those pairs $(X,Y)\in \Fil^+\times_{\Sp} \Cofil^+ = \C_{\infty}$ such that the natural maps $X_i\to Y_i$ are equivalences for all $i\geq 0$.   Since all the functors in the diagram (\ref{dia:fund}) were symmetric monoidal, we have for each integer $n\geq 0$ a diagram at the level of algebras:

\begin{equation}\label{dia:fundmon}
\begin{tikzcd}
\Alg_{\E_n}(\mathcal{G}_\infty)
 \arrow[drr, bend left, "I^{+,op}"]
  \arrow[ddr, bend right, "I^+"]
  \arrow[dr] & & \\
&\Alg_{\E_n}(\C_\infty) \arrow[r] \arrow[d]&  \Alg_{\E_n}(\Cofil^+ )\arrow[d]\\
& \Alg_{\E_n}(\Fil^+ )\arrow[r]& \Alg_{\E_n}(\Sp)
\end{tikzcd}
\end{equation}
where the square is Cartesian and the functor $\Alg_{\E_n}(\mathcal{G}_\infty) \to \Alg_{\E_n}(\C_\infty)$ is fully faithful with essential image as described above.

Recall that we were interested in understanding when an $\E_n$ filtered spectrum $X\in \Alg_{\E_n}(\Fil)$ is split - that is, when there exists $Z\in \Alg_{\E_n}(\Gr)$ such that $X \simeq IZ.$  The following proposition relates that to our current situation; informally, it allows us to get rid of the +'s.

\begin{prop}\label{prop:MonRet}
There exists a diagram of symmetric monoidal $\infty$-categories and symmetric monoidal functors 
$$
\begin{tikzcd}
&\mathcal{G}_\infty \arrow[r,"\pi"] \arrow[d, "I^+"]&  \Gr \arrow[d, "I"]\\
\Fil \arrow[r,"\iota"] & \Fil^+ \arrow[r,"\varpi"]& \Fil
\end{tikzcd}
$$
where the bottom row is a retract and $I^+$ is induced by the $I_n^+$ at each finite level.  
\end{prop}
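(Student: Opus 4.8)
The plan is to build $\iota$ and $\varpi$ from left Kan extension and restriction along the evident symmetric monoidal inclusion $j\colon \Z_{\geq 0}\hookrightarrow \Z_{\geq 0}^+$ (ordinary addition sitting inside addition with the absorbing terminal object $+$), to define $\pi := \text{gr}\circ \varpi\circ I^+$, and to produce the commuting square by showing that $\varpi\circ I^+$ factors through $I$. The one-sided inverse $\text{gr}\circ I\simeq \text{id}$ then does the rest.

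First I would set $\iota := j_!$ and $\varpi := j_*$. The pairs $(\Z_{\geq 0},\Sp)$ and $(\Z_{\geq 0}^+,\Sp)$ satisfy the hypotheses of Proposition~\ref{prop:dayconvsmc} with $\kappa=\aleph_1$ (both indexing categories are at most countable and the smash product preserves all colimits of $\Sp$ separately), so Proposition~\ref{prop:kanmonoidal} exhibits $\iota$ as a symmetric monoidal functor with lax symmetric monoidal right adjoint $\varpi$. In fact $\varpi$ is strong symmetric monoidal: in each finite filtration degree $k$ the lax structure map $\varpi(A)\otimes \varpi(B)\to \varpi(A\otimes B)$ is a colimit over the poset of pairs $(a,b)$ with $a\otimes b\leq k$ in $\Z_{\geq 0}^+$, and since $+\not\leq k$ this poset is exactly the pairs of finite integers with $a+b\leq k$ — the indexing poset for Day convolution over $\Z_{\geq 0}$ — so the map is an equivalence; the unit map is an equivalence for the same reason. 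Since $j$ is fully faithful, the unit $\text{id}_{\Fil}\to \varpi\circ\iota = j_*j_!$ is an equivalence, giving the retract along the bottom row.

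Next I would define $\pi := \text{gr}\circ \varpi\circ I^+$, which is symmetric monoidal as a composite of symmetric monoidal functors ($I^+$ by hypothesis, $\varpi$ by the above, and $\text{gr}$ by \cite[Proposition 3.2.1]{LurieRot}). To get the square it suffices, since $\text{gr}\circ I\simeq \text{id}_{\Gr}$, to show that $\varpi\circ I^+$ factors through $I$: if $\varpi\circ I^+\simeq I\circ \rho$ then $I\circ \pi = I\circ\text{gr}\circ\varpi\circ I^+\simeq I\circ\text{gr}\circ I\circ\rho\simeq I\circ\rho\simeq\varpi\circ I^+$ (and $\rho\simeq \text{gr}\circ\varpi\circ I^+ = \pi$, so the two descriptions of $\pi$ agree). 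For the factorization I would use $\mathcal{G}_\infty\simeq\lim_n\Gr_n^+$ from the Lemma above together with the fact that $I^+=\lim_n I_n^+$ and that each $I_n^+$ is the explicit ``summation'' functor $(W_0,\dots,W_n,W_+)\mapsto\big(W_0\to W_0\vee W_1\to\cdots\to\bigvee_{i\leq n}W_i\to W_+\vee\bigvee_{i\leq n}W_i\big)$. On underlying objects $\varpi(I^+(G))$ is then the filtered spectrum with $k$-th term $\bigvee_{i\leq k}W_i$ and structure maps the evident inclusions, which is precisely the value of $I$ (left Kan extension along $\Z_{\geq 0}^{ds}\hookrightarrow\Z_{\geq 0}$) on the graded spectrum $(W_0,W_1,W_2,\dots)$; thus $\rho$ is ``read off the graded pieces,'' and the required factorization amounts to a Beck--Chevalley statement that restriction along $j$ commutes with the summation Kan extensions, valid because summation over $\Z_{\geq 0}^+$ in any finite degree involves only finite indices.

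The hard part will be making this last step coherent rather than object-by-object, and in particular carrying the Day convolution structures through. The subtlety, flagged in the Remark above, is that $\mathcal{G}_\infty$ is genuinely not the naive graded category $\Fun((\Z_{\geq 0}^+)^{ds},\Sp)$: its transition maps absorb the degree $(n+1)$ piece into the $+$-piece, so $\rho$ cannot be assembled from a cone of functors out of the finite stages and must instead be extracted by comparing the filtered object $I^+(G)$ against the cofiltered object $I^{+,op}(G)$ of diagram~(\ref{dia:fund}) (equivalently, by applying $\text{gr}$). Establishing the Beck--Chevalley equivalence at the level of the Day convolution monoidal structures — using Propositions~\ref{prop:dayconvsmc} and~\ref{prop:kanmonoidal} and Variant~\ref{var:day} on the cofiltered side — is where the technical work is concentrated; once it is in hand, the diagram of the proposition assembles formally.
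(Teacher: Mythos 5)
Your bottom row is exactly the paper's: $\iota = q_!$ and $\varpi = q_*$ for the symmetric monoidal inclusion $q\colon\Z_{\geq 0}\to\Z_{\geq 0}^+$, with the retract coming from full faithfulness of $q$. Your explicit check that $\varpi$ is \emph{strong} (not merely lax) symmetric monoidal --- via the observation that $+\not\leq k$ for finite $k$, so the Day convolution indexing posets for $\Fil^+$ in finite degrees coincide with those for $\Fil$ --- is a detail the paper elides with ``it is easy to see,'' and is worth having. Where you genuinely diverge is the square. The paper never factors $\varpi\circ I^+$ through $I$: it observes that at each finite level the square
$$
\begin{tikzcd}
\Gr_{n}^+ \arrow[r] \arrow[d,"I_n^{+}"] & \Gr_{n} \arrow[d,"I_n"]\\
\Fil_n^+  \arrow[r] & \Fil_n
\end{tikzcd}
$$
commutes, with horizontal arrows ``forget the $+$,'' and takes the limit over $n$; the functor $\pi$ is then \emph{defined} as $\lim_n(\Gr_n^+\to\Gr_n)$. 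This works because the forgetful functors $\Gr_n^+\to\Gr_n$ are compatible with the transition maps on both sides: the transition map $\Gr_{n+1}^+\to\Gr_n^+$ only alters the $+$-piece and the degree-$(n{+}1)$ piece, both of which $\Gr_n$ discards. So your claim that ``$\rho$ cannot be assembled from a cone of functors out of the finite stages'' is mistaken --- the cone exists, and assembling it is precisely how the paper sidesteps the step you flag as hard.

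Your alternative route ($\pi:=\gr\circ\varpi\circ I^+$, then verify $\varpi\circ I^+\simeq I\circ\rho$) is viable, and your logical ordering is correct: one cannot simply apply $I\circ\gr$ to $\varpi I^+$ without first knowing it is split, and you correctly establish the factorization before invoking $\gr\circ I\simeq\mathrm{id}$. But the deferred Beck--Chevalley step is where all the content lies, and it cannot be run globally as stated, since $\mathcal{G}_\infty$ is not a functor category on $(\Z_{\geq 0}^{+})^{ds}$. It would have to be carried out level by level --- where $\Gr_n^+=\Fun(([n]^+)^{ds},\Sp)$, the relevant comma categories are finite, and the colimit formulas for $I_n^+$ visibly commute with restriction along $[n]\to[n]^+$ --- and then passed to the limit, at which point you have essentially reconstructed the paper's finite-level squares. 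The paper's limit-of-squares formulation is the shortest way to discharge exactly this.
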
 
\begin{proof}

We have the square at each finite level: 
$$
\begin{tikzcd}
\Gr_{n}^+ \arrow[r] \arrow[d,"I_n^{+}"] & \Gr_{n} \arrow[d,"I_n"]\\
\Fil_n^+  \arrow[r] & \Fil_n .
\end{tikzcd}
$$
where the right arrow comes from ignoring the $+$.  It is easy to see that all the functors are symmetric monoidal, and so taking the limit gives the square.  

For the bottom row, we apply Proposition \ref{prop:kanmonoidal} to the symmetric monoidal functor $q:\Z_{\geq 0} \to \Z_{\geq 0}^+.$  The functor $\varpi$ certainly coincides with $q_*$, and we define $\iota = q_!$, which is symmetric monoidal by Proposition \ref{prop:kanmonoidal}.  It is immediate that $\varpi \circ \iota$ is an equivalence, so the bottom row is a retract.  

\end{proof}


We are now ready to prove the main result of this section.

\begin{proof}[Proof of Theorem \ref{thm:SplitMachine}]
We use the notations of Proposition \ref{prop:MonRet}.  Since $\iota$ is a symmetric monoidal functor, we obtain an $\E_n$ algebra $\iota X \in \Alg_{\E_n}(\Fil^+).$  
On the cofiltered side, we note that by the universal property of Day convolution, the symmetric monoidal functor $\Z_{\geq 0} \to \Z_{\geq 0}^+$ induces a map of $\infty$-operads $\Fun(\Z_{\geq 0}^+, \Sp^{op})^{\otimes} \to \Fun(\Z_{\geq 0},\Sp^{op})^{\otimes}.$  This amounts to an oplax symmetric monoidal functor $\Cofil^+\to \Cofil$ which restricts away from the $+$.  It is easy to see that this is actually a symmetric monoidal functor, and so its adjoint $\iota^{op}: \Cofil \to \Cofil^+$ by right Kan extension is lax monoidal.  Concretely, this is the functor which takes a cofiltered spectrum and adds in its limit.  As a result, we obtain an $\E_n$-algebra $\iota^{op}Y \in \Alg_{\E_n}(\Cofil^+)$ with image $\lim Y\in \Alg_{\E_n}(\Sp).$  

Condition (1) in the statement of the theorem guarantees that $\iota X$ and $\iota^{op}Y$ determine an element $\mathcal{X} \in \Alg_{\E_n}(\C_{\infty})$.  Condition (2) then ensures that $\mathcal{X}$ is in the essential image of the fully faithful functor $\Alg_{\E_n}(\mathcal{G}_\infty) \to \Alg_{\E_n}(\C_\infty)$; consequently, we regard $\mathcal{X}$ as an $\E_n$-algebra in $\mathcal{G}_{\infty}$.  Finally, we chase through the diagram of Proposition \ref{prop:MonRet} to see that $I\pi \mathcal{X} \simeq \varpi I^+ \mathcal{X} \simeq \varpi \iota X \simeq X$ as $\E_n$-algebras in $\Fil$.   Hence, $X$ is $\E_n$-split, as desired. 
\end{proof}

\bibliographystyle{alpha}
\bibliography{Bibliography}

\end{document}